\theoremstyle{plain}
\newtheorem{theorem}{Theorem}[section]
\newtheorem{proposition}[theorem]{Proposition}
\newtheorem{lemma}[theorem]{Lemma}
\newtheorem{corollary}[theorem]{Corollary}
\theoremstyle{definition}
\newtheorem{assumption}[theorem]{Assumption}
\theoremstyle{remark}
\newtheorem{remark}[theorem]{Remark}
\newcommand{\commHL}[1]{{\textcolor{blue}{#1}}}
\newcommand{\xmark}{\ding{55}}%
\def\R{\mathbb{R}}
\def\E{\mathbb{E}}
\def\P{\mathbb{P}}
\def\H{\mathbf{H}}
\def\eps{\varepsilon}
\def\X{\mathbf{X}}
\def\x{\mathbf{x}}
\def\param{\boldsymbol{\theta}}
\def\Param{\boldsymbol{\Theta}}
\DeclareMathOperator{\prox}{\textup{prox}}
\DeclareMathOperator{\proj}{\textup{proj}}
\DeclareMathOperator{\dist}{dist}
\DeclareMathOperator*{\argmin}{arg\,min}
\newcommand{\tr}{\textup{tr}}
\def\R{\mathbb{R}}
\def\E{\mathbb{E}}
\def\P{\mathbb{P}}
\def\H{\mathbf{H}}
\def\eps{\varepsilon}
\def\X{\mathbf{X}}
\def\x{\mathbf{x}}
\def\H{\mathbf{H}}
\icmltitlerunning{Convergence of First-Order Methods with Dependent Data}
\begin{document}
	
	\twocolumn[
	
	\icmltitle{Convergence of First-Order Methods for Constrained Nonconvex Optimization \\ with Dependent Data}

	
	
	\icmlsetsymbol{equal}{*}
	
	\begin{icmlauthorlist}
		\icmlauthor{Ahmet Alacaoglu}{equal,yyy}
		\icmlauthor{Hanbaek Lyu}{equal,xxx}
	\end{icmlauthorlist}

	\icmlaffiliation{xxx}{Department of Mathematics, University of Wisconsin--Madison, WI, USA}
	
	\icmlaffiliation{yyy}{Wisconsin Institute for Discovery, University of Wisconsin--Madison, WI, USA}

	
	\icmlcorrespondingauthor{Hanbaek Lyu}{hlyu@math.wisc.edu}

	\icmlkeywords{First-order methods, nonconvex optimization, convergence, complexity, dependent data, Markov chains}
	
	\vskip 0.3in
	]
	
	
	
	\printAffiliationsAndNotice{\icmlEqualContribution} 
	
	\begin{abstract}
We focus on analyzing the classical stochastic projected gradient methods under a general dependent data sampling scheme for constrained smooth nonconvex optimization. We show the worst-case rate of convergence $\tilde{O}(t^{-1/4})$ and complexity $\tilde{O}(\varepsilon^{-4})$ for achieving an $\varepsilon$-near stationary point in terms of the norm of the gradient of Moreau envelope and gradient mapping. While classical convergence guarantee requires i.i.d. data sampling from the target distribution, we only require a mild mixing condition of the conditional distribution, which holds for a wide class of Markov chain sampling algorithms. This improves the existing complexity for the constrained smooth nonconvex optimization with dependent data from $\tilde{O}(\varepsilon^{-8})$ to $\tilde{O}(\varepsilon^{-4})$ with a significantly simpler analysis. We illustrate the generality of our approach by deriving convergence results with dependent data for stochastic proximal gradient methods, adaptive stochastic gradient algorithm AdaGrad and stochastic gradient algorithm with heavy ball momentum. As an application, we obtain first online nonnegative matrix factorization algorithms for dependent data based on stochastic projected gradient methods with adaptive step sizes and optimal rate of convergence.
	\end{abstract}

	\section{Introduction}
	\label{Introduction}
 
	Consider the minimization of a function $f:\R^{p}\rightarrow \mathbb{R}$ given as an expectation:
	\begin{equation}\label{eq:expected_loss_minimization}
		\param^{*} \in \argmin_{\param\in \Param} \left\{f(\param):= \E_{\x\sim \pi}\left[ \ell(\param, \x)  \right]\right\},
	\end{equation} 
	where $\pi$ is a distribution on a sample space $\Omega\subseteq \R^{d}$ with a density function; $\ell\colon\Omega\times \Param\rightarrow \mathbb{R}$ a per-sample loss function and $\Param\subseteq \R^{p}$ a closed convex set with an efficiently computable projection
	\begin{equation}
		\proj(\param) = \argmin_{\param'\in\Param} \frac{1}{2} \lVert \param-\param' \rVert^{2}.
	\end{equation}
	We assume that $f$ is a smooth and possibly nonconvex function.
	Constrained nonconvex optimization with \emph{dependent data} arise in many situations such as decentralized constrained optimization over networked systems, where the i.i.d. sampling requires significantly more communication than the dependent sampling~\citep{johansson2007simple, johansson2010randomized, duchi2012ergodic}.
	Other applications are policy evaluation in reinforcement learning where the Markovian data is naturally present since the underlying model is a Markov Decision Process~\citep{bhandari2018finite}, and  online nonnegative matrix factorization and network denoising~\citep{lyu2020online}.
	
	\subsection{Related Work and Summary of Contributions}
	It is well-known that obtaining optimal complexity with \emph{single-sample} projected stochastic gradient descent (SGD) for constrained nonconvex problems is significantly more challenging than unconstrained nonconvex optimization~\citep{ghadimi2016mini,davis2019stochastic, alacaoglu2020convergence}.
	This challenge has been recently overcome by~\citep{davis2019stochastic} within the framework of weakly convex optimization, which resulted in optimal complexity results for projected/proximal SGD (PSGD).
	Later, this result is extended for algorithms such as SGD with heavy ball momentum~\citep{mai2020convergence} or adaptive algorithms such as AMSGrad and AdaGrad~\citep{alacaoglu2020convergence}.
	These guarantees require i.i.d. sampling from the underlying distribution $\pi$.

	Optimization with non-i.i.d. data is studied in the convex and nonconvex cases with gradient/mirror descent in~\citep{sun2018markov,duchi2012ergodic,nagaraj2020least} and block coordinate descent in~\citep{sun2020markov}. SGD is also recently considered in~\citep{wang2021sample} for convex problems. 
Another important work in this direction is~\citep{karimi2019non} that focused on unconstrained nonconvex case with a different assumption on the dependent data compared to previous works and relaxed the assumptions on the variance.
	For constrained and nonconvex problems, the work~\citep{lyu2020online} showed asymptotic guarantees of stochastic majorization-minimization (SMM)-type algorithms to stationary points of the expected loss function.

	More recently, \citep{lyu2022convergence} studied generalized SMM-type algorithms with dependent data for constrained problems and showed the complexity $\tilde{O}(\varepsilon^{-8})$ with standard assumptions (that we will clarify in the sequel) and $\tilde{O}(\varepsilon^{-4})$ when all the iterates of the algorithm lie in the interior of the constraint set, for obtaining $\varepsilon$-stationarity.
	We also remark that \citep{lyu2022convergence} showed that for the `empirical loss functions' (recursive average of sample losses), SMM-type algorithms need only $\tilde{O}(\eps^{-4})$ iterations for making the stationarity gap under $\eps$. Our present work does not consider empirical loss functions but focus on expected loss functions. See \citep{lyu2022convergence} for more details. 
	
	Since the complexity $\tilde O(\varepsilon^{-8})$ is suboptimal for nonconvex expected loss minimization, the motivation of our work is to understand if this complexity is improvable or if it is inherent when we handle dependent data and constraints jointly.
	Our results conclude that the complexity is indeed improvable and show the near-optimal complexity $\tilde{O}(\varepsilon^{-4})$ for constrained nonconvex problems with dependent data.
	Unlike our result, previous work~\citep{lyu2022convergence} needed an additional assumption that the iterates lie in the interior of the constraint (which is difficult to satisfy in general for constrained problems) for the optimal complexity $\tilde{O}(\varepsilon^{-4})$.
	Moreover, to our knowledge, no convergence rate of \emph{projected} SGD is known in the \emph{constrained} nonconvex case with non-i.i.d. sampling.
	We also show the first rates for AdaGrad~\citep{Duchi:EECS-2010-24} and SGD with heavy ball momentum~\citep{mai2020convergence} for this setting. See Table \ref{table: t1} for a summary of the discussion above. 
	
	After the completion of our manuscript, we became aware of the recent concurrent work~\citep{dorfman2022adapting} that analyzed AdaGrad with multi level Monte Carlo gradient estimation for dependent data.
	This work focused on the \emph{unconstrained} nonconvex setting whereas our main focus is the more general class of \emph{constrained} nonconvex problems.
	Hence we believe the two results complement each other.

    {We also note that slightly stronger versions of Assumption~\ref{assumption:A2} are required even for unconstrained nonconvex optimization with dependent data, see~\cite{sun2018markov,dorfman2022adapting}. It is well-known that this assumption is difficult to satisfy in the unconstrained setting, but it is more realistic 
    with the presence of constraints. Because of this reason, our results incorporating the constraints and projections in the algorithm provides a more realistic problem setup. 
    While our results would recover those in~\cite{sun2018markov} when specialized to the unconstrained case,     due to Assumption~\ref{assumption:A2}, this unconstrained setting would be less realistic as argued above. Because of this, and for other motivating applications, the main focus of this paper is obtaining optimal complexity results for constrained nonconvex problems.}
	
	\subsection{Contribution}
	
	We consider convergence of stochastic first-order methods, including proximal and projected stochastic gradient descent (SGD), projected SGD with momentum, and stochastic adaptive gradient descent (AdaGrad-norm). These are all classical nonconvex optimization algorithms that have been used extensively in various optimization and machine learning tasks. Our main focus is to establish optimal convergence rate for such stochastic first-order methods under very general data sampling scheme, including functions of Markov chains, state-dependent Markov chains, and more general stochastic processes with fast enough mixing of multi-step conditional distribution. 
	
	To summarize our results, consider the following simple first-order method:
	\begin{description}
		\item[Step 1.] Sample $\x_{t+1}$ from a distribution conditional on $\x_{1},\dots,\x_{t}$; ($\triangleright$ possibly non-i.i.d. samples) 
		\item[Step 2.] Compute a stochastic gradient $G(\param_{t}, \x_{t+1})$ (see Assumption \ref{assumption:A2} for Def.) and  $\param_{t+1} \leftarrow \textup{proj}_{\Param} \left(  \param_{t} - \alpha_{t} G(\param_{t}, \x_{t+1}) \right)$, where the step size $\alpha_{t}$ is chosen so that either (1) non-summable and square-summable; or (2) according to AdaGrad-norm: $\alpha_{t}^{-2}=\alpha_{t-1}^{-2}+\lVert G(\param_{t}, \x_{t+1}) \rVert^{2}\alpha^{-2}$ for $\alpha>0$.
	\end{description}
	
	An important point here is that \textit{we do not require the new training point $\x_{t+1}$ to be distributed according to the stationary distribution $\pi$, nor to be independent on all the previous samples $\x_{1},\dots,\x_{t}$.} For instance, we allow one to sample $\x_{t+1}$ according to an underlying Markov chain, so that each step of sampling is computationally very efficient but the distribution $\x_{t+1}$ conditional on $\x_{t}$ could be far from $\pi$. This may induce bias in estimating the stochastic gradient $G(\param_{t}, \x_{t+1})$. 
	
	Suppose $f$ is $\rho$-smooth; $\Param\subseteq \R^{p}$ is convex, closed; and the training samples $\x_{t}$ are a function of some underlying Markov chain mixing sufficiently fast (see Section~\ref{sec: grad_estim}). Under some mild assumptions used in the literature
    \citep{sun2018markov,lyu2022convergence,bhandari2018finite}, we establish the following convergence results for a wide range of stochastic first-order methods under non-i.i.d. data setting:
	
	\begin{itemize}
	    \item We show that any convergent subsequence of $(\param_{t})_{t\ge 0}$ converges to a stationary point of~\eqref{eq:expected_loss_minimization} almost surely. The rate of convergence for finding stationary points is $\tilde{O}(T^{-1/4})$ (measured using gradient mapping norm). (Thm. \ref{th: th_proj_sgd}, \ref{th: th_proj_sgd_smooth}, \ref{th: th_adagrad})
	    
	    \item The same result as above holds when $(\param_{t})_{t\ge 0}$ are generated by using stochastic heavy ball (see Alg. \ref{algorithm:shb}) and projected SGD with state-dependent Markov chain (see Thm. \ref{th: th_proj_sgd_state_dep}.
	\end{itemize}

	This is the same rate of convergence as in the i.i.d. case, up to $\log$-factors, which was obtained in \citep{davis2019stochastic} in terms of gradient mapping as shown in Thm.~\ref{th: th_proj_sgd_smooth}. Hence our analysis shows that the convergence of the algorithm and the order of the rate of convergence are not affected by such statistical bias in sampling training data, which was described earlier in this subsection. Furthermore, our result improves the rate of convergence of stochastic algorithms for constrained nonconvex expected loss minimization with dependent data \citep{lyu2022convergence}, see Thm.~\ref{th: th_proj_sgd_smooth} for the details. Moreover, we extend our analysis to obtain similar results for such projected SGD algorithms as adaptive gradient algorithm AdaGrad (see Algorithm \ref{algorithm:adagrad} and Theorem \ref{th: th_adagrad}) and SGD with heavy ball momentum (see Algorithm \ref{algorithm:shb} and Theorem \ref{th: th_shb}).

	
	\begin{table*}
		\centering
		{\small
			\begin{tabular}{lllll}\toprule
				& \begin{tabular}{@{}l@{}} $\min_{\param\in\mathbb{R}^p} f(\param)$ \\ $f\colon$ $L$-smooth\end{tabular} & \begin{tabular}{@{}l@{}} $\min_{\param\in\Param} f(\param)$\\ $f\colon$ $L$-smooth \end{tabular}  &   \begin{tabular}{@{}l@{}} Markovian \\ data \end{tabular}  & Constrained \\\midrule
				SMM~\citep{lyu2022convergence} & $\tilde{O}(\varepsilon^{-4})$ & $\tilde{O}(\varepsilon^{-8})^\dagger$ & \checkmark &\checkmark \\[1mm]
				\midrule
				SGD~\citep{sun2018markov,karimi2019non} & $\tilde{O}(\varepsilon^{-4})$  & $-$  & \checkmark &\xmark  \\[1mm]
				\midrule
				Proj. SGD \citep{davis2019stochastic} & $\tilde{O}(\varepsilon^{-4})$ & $\tilde{O}(\varepsilon^{-4})$  & \xmark & \checkmark  \\
				\midrule
				Proj. SGD-Sec.~\ref{sec: proj_sgd} & $\tilde{O}(\varepsilon^{-4})$ & $\tilde{O}(\varepsilon^{-4})$  & \checkmark &\checkmark  \\
				\midrule
				AdaGrad-Sec.~\ref{sec: adagrad} & $\tilde{O}(\varepsilon^{-4})$ & $\tilde{O}(\varepsilon^{-4})$ & \checkmark &\checkmark  \\
				\bottomrule
			\end{tabular}
		}
		\caption{{\small Complexity comparison for stochastic nonconvex optimization with non-i.i.d. data. Complexities in each column are the number of stochastic gradients to obtain: $\mathbb{E}\|\nabla f(\param)\|\leq\varepsilon$ and $\mathbb{E}\left[ \textup{dist}(0, \partial \varphi(\param)) \right]\leq\varepsilon$, respectively (where $\varphi$ is defined in~\eqref{eq:def_varphi_objective}). $^\dagger$This work showed the improved complexity $\tilde{O}(\varepsilon^{-4})$ under the additional assumption that the iterates of the algorithm are in the interior of $\Param$, which does not necessarily hold in the constrained case. We do not make such an assumption in this paper.}}
		\label{table: t1}
	\end{table*}

	\subsection{Notations}
	
	We fix $p\in \mathbb{N}$ to be the dimension of the ambient Euclidean space $\R^{p}$ equipped with the inner product $\langle \cdot,\cdot \rangle$ that also induces the Euclidean norm $\lVert \cdot \rVert$. For each $\eps>0$, let $B_{\eps}:=\{x\in \R^{p}\,|\, \lVert x \rVert \le \eps \}$ denote the $\eps$-ball centered at the origin. We also use the distance function defined as $\dist(\param, \Param) = \min_{\param'\in\Param} \| \param - \param' \|$ and the $\sigma$-algebra $\mathcal{F}_{t-k} =\sigma(\x_{1},\dots,\x_{t-k})$. We denote $f\colon\Param\rightarrow \R$ to be a generic objective function { for which we introduce the precise assumptions in Section~\ref{sec: grad_estim}}, where $\Param\subseteq \mathbb{R}^p$ is closed and convex. Let $\iota_{\Param}$ denote the indicator function of the set $\Param$, where $\iota_{\Param}(\param) = 0 \text{~if~} \param \in \Param$ and $ \iota_{\Param}(\theta) = +\infty \text{~if~} \param\not\in\Param$. Note that 
	\begin{align}\label{eq:def_varphi_objective}
		\argmin_{\param\in \Param} f(\param)   = \argmin_{\param\in \R^{p}} \left\{ \varphi(\param):=f(\param)+\iota_{\Param}(\param)\right\}. 
	\end{align}

	\subsection{Preliminaries on Stationarity Measures}\label{subsec: station_meas}
	{Since we do not expect the first-order optimality conditions to be satisfied exactly in a finite number of iterations in practice, we wish to estimate the worst-case number of iterations required to achieve an \emph{$\eps$-approximate} solution and the corresponding scaling with $\eps$.} To this end, we can relax the first-order optimality conditions as follows: For each $\eps>0$, we say $\param^{*}$ is an \textit{$\eps$-stationary point} (or $\eps$-approximate stationary point) for $f$ over $\Param$ if and only if $\textup{dist}(\mathbf{0}, \partial \varphi(\param^{*}) ) \le \eps$. We say a point $\param^{*}$ is \emph{approximately near stationary} for $f$ over $\Param$ if there exists some point $\hat{\param}$ near $\param$ that is approximately stationary for $f$ over $\Param$. We will make this notion precise through the following discussion.

	One of the central notions in the recent influential work by~\citet{davis2019stochastic} in analyzing convergence rates of first-order methods for constrained nonconvex problems  is the \emph{Moreau envelope}, which is a smooth approximation of an objective function that is closely related to proximal mapping. For a constant $\lambda>0$, we define the  \textit{Moreau envelope} $\varphi_{\lambda}$ of $\varphi$ defined in \eqref{eq:def_varphi_objective} as 
	\begin{align}
		\varphi_{\lambda}(\param):=\min_{\param'\in \R^{p}} \left( \varphi(\param') + \frac{1}{2\lambda} \lVert \param'-\param \rVert^{2}\right).\label{eq: def_moreu_env}
	\end{align}
	If $f$ is $\rho$-weakly convex and if $\lambda<\rho^{-1}$, then the minimum in the right hand side is uniquely achieved at a point $\hat{\param}$, which we call the \textit{proximal point} of $\param$. Accordingly, we define 
	the proximal map
	\begin{align}\label{eq:proximal_point}
		\hat{\param}&:=\prox_{\lambda\varphi}(\param) \notag \\
  &:= \argmin_{\param'\in \R^{p}} \Big(  \varphi(\param') + \frac{1}{2\lambda } \lVert \param'-\param \rVert^{2}\Big)
	\end{align}
	Also in this case, the Moreau envelope $\varphi_{\lambda}$ is $C^{1}$ with gradient given by (see~\citep{davis2019stochastic})
	\begin{align}\label{eq:moreau_grad}
		\nabla \varphi_{\lambda}(\param) = \lambda^{-1} (\param - \prox_{\lambda \varphi}(\param)). 
	\end{align}
	When $\param$ is a stationary point of $\varphi$, then its proximal point $\hat{\param}$ should agree with $\param$. Hence the gradient norm of the Moreau envelope $\varphi_{\lambda}$ may provide an alternative measure of stationarity. Indeed, as shown in~\cite{davis2019stochastic}, it provides a measure of \emph{near stationarity} in the sense that if $\lVert \nabla \varphi_{\lambda}(\param)\rVert$ is small, then since the proximal point $\hat{\param}$ in \eqref{eq:proximal_point} is within $\lambda \lVert \nabla \varphi_{\lambda}(\param)\rVert$ from $\param$, $\hat{\param}$ approximately stationary in terms of $\textup{dist}(\mathbf{0}, \partial \varphi(\hat{\param})) $:
	\begin{align}\label{eq:Moreau_near_stationary}
		\lVert \param - \hat{\param} \rVert \le \lambda \lVert \nabla \varphi_{\lambda}(\param) \rVert, ~~~~~ \varphi(\hat{\param}) \le \varphi(\param), \\ \textup{dist}(\mathbf{0},\, \partial \varphi(\hat{\param}) ) \le  \lVert \nabla \varphi_{\lambda}(\param) \rVert.
	\end{align}
	Note that the first and the last inequality above follows from the first-order optimality condition for $\hat{\param}$ together with \eqref{eq:moreau_grad} (see also Propositions \ref{prop:stationarty_gap_moreau} and \ref{prop:approx_stationarity} in Appendix \ref{sec: appendix}).

	Hence, in the literature of weakly convex optimization, it is common to state the results in terms of the norm of the gradient of Moreau envelope~\citep{davis2019stochastic,drusvyatskiy2019efficiency} which we will also adopt.
	When $g$ is additionally smooth, a commonly adopted measure to state convergence results is \emph{gradient mapping} which is defined as~\citep{nesterov2013gradient}
	\begin{align}
		\| \mathcal{G}_{1/\hat\rho}(\param_t)\| &= \hat\rho\left\| \param_t - \textup{proj}_{\Param}\left(\param_t - \frac{1}{\hat\rho} \nabla f(\param_t)\right) \right\|  \notag \\
		&=: \hat\rho \| \param_t - \tilde{\param}_t \|,\label{eq: gkr3}
	\end{align}
	for any $\lambda >0$, where we also defined $\tilde{\param}_t$.
	The results~\cite{davis2019stochastic}~\cite{drusvyatskiy2018error} showed how to translate the guarantees on the gradient of the Moreau envelope to gradient mapping by proving that
	\begin{align*}
		\|\mathcal{G}_{1/2\hat\rho}(\param)\| \leq \frac{3}{2} \| \nabla \varphi_{1/\hat\rho}(\param) \|.
	\end{align*}
	It is easy to show that a small gradient mapping implies that $\param_t$ is close to $\textup{proj}_{\Param}\left(\param_t - (1/\hat\rho) \nabla f(\param_t)\right)$ which itself is approximately stationary in view of Sec.~\ref{subsec: station_meas} which can be shown by using the definition of $\tilde{\param}_t$ and smoothness of $f$.
	Even though such an approximately stationary point can be computed in the deterministic case, computation of $\nabla f(\param_t)$ is not tractable in the stochastic case. However, as we show in Sec.~\ref{sec: smooth}, we can still output a point which is approximately stationary, in a tractable manner, with the claimed complexity results in our dependent data setting.

	\section{Stochastic Gradient Estimation}\label{sec: grad_estim}

	Denote as $\Delta_{[t-k,t]}$ the worst-case total variation distance between conditional distribution of $\x_{t}$ given $\x_{1},\dots,\x_{t-k}\in \Omega$ and the stationary distribution $\pi$. Namely, 
	\begin{align}
		\hspace{-0.3cm} \Delta_{[t-k,t]} := \sup_{\x_{1},\dots,\x_{t-k}} \lVert \pi_{t}(\cdot\,|\, \x_{1},\dots,\x_{t-k})  - \pi \rVert_{TV},
	\end{align}
	where $\pi_{t|t-k}=\pi_{t}(\cdot\,|\, \x_{1},\dots,\x_{t-k})$ denotes the probability distribution of $\x_{t}$ conditional on the past points $\x_{1},\dots,\x_{t-k}$.  

    Most of our theoretical results (except Theorem \ref{th: th_proj_sgd_state_dep} for state-dependent Markov chains, see Section \ref{sec: sjt4}) operate under the following three assumptions.

	 \begin{assumption}\label{assumption:A2}
	The function $f$ is $C^1$ smooth and has $\rho$-Lipschitz gradient and the set $\Param$ is closed and convex. There exists an open set $U$ containing $\Param$ and a mapping $G:U\times \Omega\rightarrow \R^{p}$ such that for all $\param\in \Param$, $ \E_{\x\sim \pi}\left[ G(\param, \x) \right] =\nabla  f(\param)$. Also $\param\mapsto G(\param, x)$ is $L_1$-Lipschitz for all $x$ for some $L_{1}>0$.
\end{assumption}

	\begin{assumption} \label{assumption:A1}
		We can sample a sequence of points  $(\x_{t})_{t\ge 1}$ in $\Omega$ in a way that: (1) For each $x\ge 0$,  $\Delta_{[t,t+N]}$ is non-increasing in $N\ge 0$; and (2) $\lim_{N\rightarrow\infty} \Delta_{[t, t+N]}=0$ for all $t\ge 0$; and (3) there exists a sequence $k_{t}\in [0,t]$, $t\ge 1$ such that  $\Delta_{[t-k_{t}, t]}\rightarrow 0$ and  $\sum_{t=1}^{\infty} \alpha_{t}\Delta_{[t-k_{t},t]} <\infty$, where $\alpha_{t}>0$ denotes the stepsize in the first-order method. 
	\end{assumption}



\begin{assumption}\label{assumption:A3}
    Assume either of the two: \emph{(i)} There is $L\in (0,\infty)$ such that for each $t\ge 1$ and $\param\in \Param$, $ \E\left[ \lVert G(\param, \x_{t+1})  \rVert \,|\,  \mathcal{F}_{t}\right]  \le L$ and the process $(\x_{t})_{t\ge 0}$ is a function of some time-homogeneous Markov chain; or \emph{(ii)} There  is $L\in (0,\infty)$ such that $\lVert G(\param, \x) \rVert \le L$ for all $\param,x$. 
\end{assumption}

\cref{assumption:A2} is about smoothness of the objective and stochastic gradient operator $G$. The former is standard in the literature of stochastic constrained first-order methods and the latter is also common when we additionally work with dependent data(see, e.g., \cite{davis2019stochastic, sun2020markov, lyu2022convergence}).


\cref{assumption:A1} states that: (1) The $N$-step conditional distribution $\pi_{t+N|t}$ can only be closer to the stationary distribution $\pi$ when $N$ increases; (2) the $N$-step conditional data distribution $\pi_{t+N|t}$ converges to the stationary distribution $\pi$ asymptotically; and (3) such convergence (mixing) occurs at a sufficiently fast rate. The sequence $k_{t}$ plays a critical role in controlling dependence in data samples. The key idea is that, when analyzing quantities at time $t+1$, one conditions on a `distant past' $t-k_{t}$ (instead of the present $t$) and approximates the multi-step conditional data distribution $\pi_{t+1|t-k_{t}}$ by the stationary distribution $\pi$. The error of such approximation in the total variation distance is bounded by $\Delta_{[t-k_{t}, t]}$. \cref{assumption:A1} requires that this quantity should be summable after being multiplied by the stepsize $\alpha_{t}$. 

There are two notable special cases that satisfy \cref{assumption:A1}. First, \cref{assumption:A1} is trivially satisfied (with $k_{t}\equiv 0$) in the i.i.d. case since then $\pi_{s|t} \equiv \pi$ whenever $s>t$. 
 
Second, suppose $\x_{t}$ is given by a function $g$ of some underlying time-homogeneous Markov chain $X_{t}$ with a stationary distribution $\pi$. In this case \cref{assumption:A1}(1) holds by Scheff\'{e}'s lemma (see, e.g., Lemma 2.1 in  \cite{tsybakov2004introduction}). (Here time-inhomogeneity is not necessary.) If $X_{t}$ is irreducible and aperiodic on a finite state space, then \cref{assumption:A1}(2) holds with $\Delta_{[t-k,t]}=O(\exp(-ck))$ for some constant $c>0$ independent of $t$ \citep{levin2017markov}. So Assumption~\ref{assumption:A1}(3) is verified for any $k_{t} \ge C\log t$ for $C>0$ large enough so that $\sum_{t\ge 1} \exp(-ck_{t})<\infty$ and for any $\alpha_{t}=O(1)$.
In the case when the underlying Markov chain $X_{t}$ has countably infinite or uncountable state space, then a more general condition for geometric ergodicity is enough to imply Assumption~\ref{assumption:A1} (see, e.g., \citep{levin2017markov, meyn2012markov}). 
	See~\cite{lyu2020online} and~\citep{sun2018markov} for concrete applications and sampling methods that satisfy this assumption.
	This assumption is common in the literature~\citep{bhandari2018finite,lyu2022convergence,lyu2020online,sun2018markov,nagaraj2020least} and i.i.d. sampling is another special case.

We emphasize that Assumption \ref{assumption:A1} does not necessarily reduce to time-homogeneous and state-independent Markov chains.	Our main focus is using Assumption~\ref{assumption:A1} which is the main assumption on the data in most of the works we compare with. However, we also discuss another popular setting of modeling dependent data samples by \textit{state-dependent Markov chain}. See \ref{assumption:A1'}-\ref{assumption:A2'} and Thm. \ref{th: th_proj_sgd_state_dep}. 
 
Next, we discuss \cref{assumption:A3} on boundedness of stochastic gradients. In the i.i.d. case, it is standard to assume uniform boundedness of $\E_{\x\sim \pi}[\lVert G(\param,\x)\rVert]$ for each $\param\in \Param$ \citep{davis2019stochastic, davis2020stochastic}. In the non-i.i.d. case, it has been customary to make stronger assumption of uniform boundedness of $G(\param,\x)$ even in the unconstrained nonconvex case \cite{sun2018markov, dorfman2022adapting}, which does not properly generalize the standard assumption in the i.i.d. case. This is mostly for controlling the error of multi-step conditional expectation of the stochastic gradient by its stationary expectation, which is the crucial issue in the non-i.i.d. case that is non-existent in the i.i.d. case. 

In this work, we are able to analyze the non-i.i.d. setting under a much weaker condition in Assumption~\ref{assumption:A3}(i) that only assumes one-step conditional expectation of the norm of the stochastic gradient is bounded. Although for a technical reason we will also need to assume that the data samples $(\x_{t})_{t\ge 0}$ are given as a function of some time-homogeneous Markov chain, Assumption~\ref{assumption:A3}(i) properly generalizes the standard assumptions in the i.i.d. case. In addition, We also analyze non-i.i.d. setting under uniformly bounded stochastic gradients but with more general data sampling setting (\cref{assumption:A3}(ii)), including time-inhomogeneious and non-Markovian setting.

	Now we state a key lemma that handles the bias due to dependent data and is algorithm independent. In the sequel, we will invoke this lemma for different algorithms such as SGD, AdaGrad or SGD with heavy ball momentum.
	
	\begin{lemma}[Key lemma]\label{lem:TV_linear_change_bound}
		Let Assumptions~\ref{assumption:A2}, ~\ref{assumption:A1},~\ref{assumption:A3} hold and $\param_{t}$ be generated according to Algorithm~\ref{algorithm:prox_grad},~\ref{algorithm:adagrad} or ~\ref{algorithm:shb}.
		Fix $\hat{\rho}>\rho$ and denote $\hat{\param}=\prox_{\varphi/\hat{\rho}}(\param)$ and fix $1\le k \le t$.  Then 
		\begin{align}
			\Big| \E\big[  \langle &\hat{\param}_{t} - \param_{t},\, G(\param_{t}, \x_{t+1}) \rangle \,|\, \mathcal{F}_{t-k}\big] \\
			&\hspace{-0.3cm}-\langle \hat{\param}_{t} - \param_{t},\, \E_{\x \sim \pi}\left[ G(\param_{t}, \x) \right] \rangle \Big|   \nonumber \le  \frac{ 4 L^2}{\hat\rho-\rho} \,  \Delta_{[t-k,t]}   \nonumber
			\\
			&\hspace{-0.3cm}+ \frac{2L(L_1+\hat\rho )}{\hat\rho-\rho} \,  \E\left[ \sum_{s=t-k}^{t-1} \alpha_{s}\lVert G(\param_{s},\x_{s+1}) \rVert \,\bigg|\, \mathcal{F}_{t-k} \right]. \nonumber
		\end{align}
	\end{lemma}
	
	\vspace{-0.4cm}
	This lemma borrows some ideas from~\citep{lyu2022convergence}.
	The important difference is that, the result of the lemma makes it explicit the dependence on the step size and gradient norms to be applicable with AdaGrad. This is needed because the step size of AdaGrad does not have a specific decay schedule. The proof is given in Section~\ref{sec: app_proofs}.
	
	\section{Convergence Rate Analysis}\label{sec: sec3}
	\subsection{Projected SGD with Dependent Data}\label{sec: proj_sgd}

	Now we state our first main result in this work, which extends the convergence result of projected SGD with i.i.d. samples in \citep{davis2019stochastic} to the general dependent sample setting.
	This result improves the existing complexity of stochastic algorithms from~\citep{lyu2022convergence} for solving constrained nonconvex stochastic optimization under dependent data, see Section~\ref{sec: smooth} for details.  We use the notion of \emph{global convergence} with respect to arbitrary initialization below. The proof of this result is in Appendix~\ref{app: psgd}.

	\begin{algorithm}[H]
		\small
		\caption{Projected Stochastic Gradient Algorithm (PSGD)}
		\label{algorithm:prox_grad}
		\begin{algorithmic}[1]
			\STATE \textbf{Input:} Initialize $\param_{1}\in \Param \subseteq \R^{p}$; $T > 0$;\,  Stepsizes $(\alpha_{t})_{t\ge 1}$
			\STATE Sample $\tau$ from  $\{1,\dots,T\}$ independently of everything else where $\P(\tau=k)=\frac{\alpha_k}{\sum_{t=1}^T \alpha_t}$. 
			\STATE \textbf{For $t=1,2,\dots,T$ do:}
			\STATE \qquad Sample $\x_{t+1}$ from $\pi_{t+1}=\pi_{t+1}(\cdot\,|\, \x_{1},\dots,\x_{t})$
			\STATE \qquad $\param_{t+1} \leftarrow \proj_{\Param} \left(  \param_{t} - \alpha_{t} G(\param_{t}, \x_{t+1}) \right)$\label{eq: woe3}
			\STATE \textbf{End for}
			\STATE \textbf{Return:} $\param_{T}$ (Optionally, $\param_{T}^{\text{out}}$ as either $\param_{\tau}$ or $\arg\min_{\param\in\{\param_1, \dots, \param_T \}} \| \nabla \varphi_{1/\hat{\rho}}(\param) \|^2$.) 
		\end{algorithmic}
	\end{algorithm}

\begin{theorem}[Projected stochastic gradient method]\label{th: th_proj_sgd}
		Let Assumptions \ref{assumption:A2}-\ref{assumption:A3} hold and $(\param_{t})_{t\ge 1}$ be a sequence generated by Algorithm \ref{algorithm:prox_grad}. 
		Fix $\hat{\rho}>\rho$. Then the following hold:
		\begin{description}
			\item[(i)] (Rate of convergence)  For each $T\ge 1$, 
			\begin{multline}
				\E\left[     \lVert \nabla \varphi_{1/\hat{\rho}}(\param_{T}^{\mathrm{out}}) \rVert^{2} \right]  = O\Big(\frac{1}{\sum_{k=1}^{T} \alpha_{k}}\Big(\sum_{t=1}^T \alpha_t^2 \\
				+ \sum_{t=1}^T k_t\alpha_t \alpha_{t-k_t} + \sum_{t=1}^T \alpha_t \mathbb{E}[\Delta_{[t- k_t, t]}] \Big)\Big).
			\end{multline}
			In particular, with $\alpha_t = \frac{c}{\sqrt{t}}$ for some $c> 0$ and under exponential mixing, we have that $ \E\left[     \lVert \nabla \varphi_{1/\hat{\rho}}(\param_{T}^{\mathrm{out}}) \rVert \right] \leq \varepsilon $ with $\tilde{O}\left( \varepsilon^{-4} \right)$ samples. 
			
			\item[(ii)] (Global convergence) Further assume that $\sum_{t=0}^{\infty} k_{t}\alpha_{t} \alpha_{t-k_{t}}<\infty$. Then $\lVert \nabla \varphi_{1/\hat{\rho}}(\hat{\param}_{t}) \rVert \rightarrow 0$ as $t\rightarrow \infty$ almost surely. Furthermore, 
   $\param_{t}$ converges to the set of all stationary points of $f$ over $\Param$. 
		\end{description}
	\end{theorem}
	
	If $(\x_{t})_{t\ge 1}$ is exponentially mixing, then Theorem \ref{th: th_proj_sgd}\textbf{(ii)} holds with $\alpha_{t}=t^{-1/2} (\log t)^{-1-\eps}  $ for any fixed $\eps>0$ and  $k_{t}=O(\log t)$. 

	\subsection{AdaGrad with Dependent Data}\label{sec: adagrad}
	We next establish the convergence of AdaGrad with dependent data and constrained nonconvex optimization. We will use AdaGrad with scalar step sizes (see Alg. \ref{algorithm:adagrad}), which is also referred to as AdaGrad-norm~\citep{ward2019adagrad,levy2017online,streeter2010less}.
    
    \begin{algorithm}[H]
		\small
		\caption{AdaGrad-norm~\citep{streeter2010less}}
		\label{algorithm:adagrad}
		\begin{algorithmic}[1]
			\STATE \textbf{Input:} Initialize $\param_{1}\in \Param \subseteq \R^{p}$; $T > 0$;\,   $(\alpha_{t})_{t\ge 1}$ ;\, $v_{0}>0$;\, $\alpha > 0$
			\STATE Optionally, sample $\tau$ from  $\{1,\dots,T\}$ independently of everything else where $\P(\tau=k)=\frac{1}{T}$. 
			\STATE \textbf{For $t=1,2,\dots,T$ do:}
			\STATE \qquad Sample $\x_{t+1}$ from $\pi_{t+1}=\pi_{t+1}(\cdot\,|\, \x_{1},\dots,\x_{t})$
			\STATE \qquad $v_t = v_{t-1} + \| G(\param_t,\x_{t+1}) \|^2$
			\STATE \qquad $\alpha_t = \frac{\alpha}{\sqrt{v_t}}$
			\STATE \qquad $\param_{t+1} \leftarrow \proj_{\Param} \left(  \param_{t} - \alpha_t G(\param_{t}, \x_{t+1}) \right)$
			\STATE \textbf{End for}
			\STATE \textbf{Return:} $\param_{T}$ (Optionally, $\param_{T}^{\text{out}}$ as either $\param_{\tau}$ or $\arg\min_{\param\in\{\param_1, \dots, \param_T \}} \| \nabla \varphi_{1/\hat{\rho}}(\param) \|^2$.)
		\end{algorithmic}
	\end{algorithm}
	
	For this section, we introduce an additional assumption on the boundedness of the objective values.
	\begin{assumption}{(A4)} \label{assumption:A4}
		There exists $C_\varphi\in (0,\infty)$ such that $| f(\param) | \le C_\varphi$ for all $\param\in \Param$.
	\end{assumption}
	Compared to projected SGD, the step size of AdaGrad does not have a specific decay schedule, which makes it challenging to use the existing bias analyses for dependent data (for example the idea from~\citep{lyu2022convergence}) since they critically rely on knowing the precise decay rate of the step sizes.

	To be able to apply such an analysis for adaptive algorithms, we use a generalized result in Lem.~\ref{lem:TV_linear_change_bound} and use the particular form of AdaGrad step size in Thm.~\ref{th: th_adagrad} to achieve the optimal $\tilde{O}(\varepsilon^{-4})$ complexity. Full proof of the result is given in Appendix~\ref{app: adagrad_appendix}.

		\begin{theorem}[AdaGrad-norm]\label{th: th_adagrad}
		Let \cref{assumption:A2}-\ref{assumption:A3} and \cref{assumption:A4} hold and $(\param_{t})_{t\ge 1}$ be a sequence generated by Algorithm \ref{algorithm:adagrad}. 
		Fix $\hat{\rho}>\rho$ and a nondecreasing, diverging sequence $(k_{t})_{t\ge 1}$. 
		Then,
		for each $T\ge 1$, 
		\begin{align}
			&\E\left[     \lVert \nabla \varphi_{1/\hat{\rho}}(\param_{T}^{\mathrm{out}}) \rVert^{2} \right]   \\
			&\qquad = O\bigg(\frac{k_T \log(TL^2)}{\sqrt{T}} +  \frac{1}{T}\sum_{t=1}^{T} \mathbb{E}[\Delta_{[t-k_{t}, t]}]\bigg). \nonumber 
		\end{align}
	\end{theorem}
	
		We note that unlike Thm.~\ref{th: th_proj_sgd}, for AdaGrad we only prove nonasymptotic complexity results and not asymptotic convergence statements for the output sequence of the algorithms. 
		Even though asymptotic convergence of AdaGrad with i.i.d. data is proven in~\citep{li2019convergence}, the technique in that paper relies on using the inequality~\eqref{eq: roit4} multiplied with $\alpha_t$. However, the specific form of~\eqref{eq: roit4} is important in our development to use Lem.~\ref{lem:TV_linear_change_bound} to handle the dependent data, since $\alpha_t$ brings additional stochastic dependencies.
		Even though we believe an appropriate modification of Lem.~\ref{lem:TV_linear_change_bound} can be possible, we do not pursue such generalization in the present work. 

		Since the step size in this case is nonincreasing,~\cref{assumption:A1} reduces to $\sum_{t=1}^\infty \Delta_{[t-k, t]} < \infty$. This, for example, is satisfied for the exponential mixing case that is mentioned in Theorem~\ref{th: th_proj_sgd} and considered in the previous work~\citep{lyu2022convergence,sun2018markov,bhandari2018finite}.

	\subsection{Stochastic Heavy Ball with Dependent Data}
	Because of space limitations, we defer the formal description of SGD with heavy ball momentum to the appendix (Algorithm~\ref{algorithm:shb}) and include a summary of the complexity result here. The extended theorem for this case, including the asymptotic convergence of the sequence and the proofs are given in Appendix~\ref{sec: stoc_hb}.
		\begin{theorem}\label{th: th_shb}
		Let \cref{assumption:A2}-\ref{assumption:A3} hold and $(\param_{t})_{t\ge 1}$ be a sequence generated by Algorithm \ref{algorithm:shb}. 
		Fix $\hat{\rho}\geq 2\rho$. 
		Then, for any momentum parameter $\beta \in (0, 1]$ and $T\ge 1$:
		\begin{multline}
			\E\left[     \lVert \nabla \varphi_{1/\hat{\rho}}(\param_{T}^{\mathrm{out}}) \rVert^{2} \right]  = O\bigg(\frac{1}{\beta^2\sum_{k=1}^{T} \alpha_{k}}\bigg(\sum_{t=1}^T \alpha_t^2 \\
			+ \sum_{t=1}^T k_t\alpha_t \alpha_{t-k_t} + \sum_{t=1}^T \alpha_t \mathbb{E}[\Delta_{[t- k_t, t]}] \bigg)\bigg).
		\end{multline}
			
	\end{theorem}
	
\vspace{-0.3cm}
Our analysis for the heavy ball method appears to be more flexible compared to~\citep{mai2020convergence} even when restricted to the i.i.d. case. In this case, we allow variable step sizes $\alpha_t=\frac{\gamma}{\sqrt{t}}$ whereas~\citep{mai2020convergence} requires constant step size $\alpha_t=\alpha = \frac{\gamma}{\sqrt{T}}$. We can also use any momentum parameter $\beta\in(0, 1]$ whereas~\citep{mai2020convergence} restricts to $\beta=\alpha$.  This point is important since in practice $\beta$ is used as a tuning parameter.

	\subsection{Proximal SGD with Dependent Data}
	In this section, we describe how our developments for stochastic gradient method extends to the proximal case, using the ideas from~\citep{davis2019stochastic}.
	In particular, the problem we solve in this section is
	\begin{align}\label{eq: sd4}
		\param^{*} \in \argmin_{\param\in \R^{p}} \Big( \varphi(\param)&:= f(\param) + r(\param)\Big),
	\end{align} 
	where  $f$ is as in \eqref{eq:expected_loss_minimization} and $r\colon \mathbb{R}^p \to \mathbb{R} \cup {\{ +\infty \}}$ is a convex, proper, closed function.
	In this case, in step~\ref{eq: woe3} of Algorithm \ref{algorithm:prox_grad}, we use $\prox_{\alpha_t r}$ instead of $\proj_{\Param}$ to define $\param_{t+1}$.
	We include the following result combining the ideas from Lem.~\ref{lem:TV_linear_change_bound}, Thm.~\ref{th: th_proj_sgd} and \cite{davis2019stochastic} for proving convergence of proximal stochastic gradient algorithm with dependent data.
	Full details are given in Appendix~\ref{sec: prox_extension}.
	\begin{theorem}\label{th:prox}
		Let \cref{assumption:A2}-\ref{assumption:A3} hold, $r$ be convex, proper, closed and $(\param_{t})_{t\ge 1}$ be a sequence generated by Algorithm \ref{algorithm:prox_grad} where we use $\prox_{\alpha_t r}$ instead of $\proj_{\Param}$ in step~\ref{eq: woe3}.
		Fix $\hat{\rho}>\rho$. For each $T\ge 1$, 
		\begin{multline}
			\E\left[     \lVert \nabla \varphi_{1/\hat{\rho}}(\param_{T}^{\mathrm{out}}) \rVert^{2} \right]  = O\bigg(\frac{1}{\sum_{k=1}^{T} \alpha_{k}}\bigg(\sum_{t=1}^T \alpha_t^2 \\
			+ \sum_{t=1}^T k_t\alpha_t \alpha_{t-k_t} + \sum_{t=1}^T \alpha_t \mathbb{E}[\Delta_{[t- k_t, t+1]}] \bigg)\bigg).
		\end{multline}
	\end{theorem}

\subsection{Projected SGD with state-dependent Markovian data}\label{sec: sjt4}
Next, we state an analogous result to Theorem \ref{th: th_proj_sgd} when the data samples $(\x_{t})_{t\ge 0}$ form a \textit{state-dependent Markov chain}. It extends the corresponding results in~\citep{karimi2019non,tadic2017asymptotic} to the constrained case. One difference is that in the constrained case, we need a slightly stronger assumption on the norms of the gradients, see~\ref{assumption:A3}.
The assumptions below were adapted from ~\citep{karimi2019non} and~\citep{tadic2017asymptotic}.
\begin{assumption}\label{assumption:A1'}
The sequence of data samples $(\x_{t})_{t\ge 0}$ form a \textit{state-dependent Markov chain controlled by $\param\in \Param$}, denoted as $(X_t)_{t\ge 0}$.  That is, for each $\param\in \Param$, there exists a Markov kernel $P_{\param}:\Omega\rightarrow \Omega$ such that for any bounded measurable function $H$, 
	\begin{equation}\label{eq: gk4}
		\mathbb{E}[H(X_{t+1}) | \mathcal{F}_t] = P_{\param_{t}} H(X_t),
	\end{equation}
	where $\mathcal{F}_{t}:=\sigma( X_{0}, \param_{0}, X_{1},\param_{1},\dots, X_{t},\param_{t})$. 
\end{assumption}

\begin{assumption}\label{assumption:A2'}
	There is a Lipschitz continuous solution to the Poisson equation for $(X_{t})_{t\ge 0}$. That is, there exists a measurable function $\hat G$ such that for each $\param\in \Param, x\in \Omega$, 
	\begin{equation}\label{eq: gk5}
		\hat G(\param, x) - P_{\param}\hat G(\param, x) = G(\param, x) - \nabla f(\param),
	\end{equation}
	where $f$ denotes the objective function in \eqref{eq:expected_loss_minimization} and $G(\param, x)$ is as in \cref{assumption:A2}. Furthermore, There exists $C_1, C_2, C_3$ such that 
	\begin{align}\label{eq: gk6}
		&\|\hat G(\param, x) \| \leq C_1, ~~~ \| P_{\param} \hat G(\param, x) \| \leq C_2, \\
		&\sup_{x} \| P_{\param}\hat G(\param, x) - P_{\param'}\hat G(\param', x)\| \leq C_3 \| \param - \param'\|.
	\end{align}
\end{assumption}

\begin{theorem}[Projected SGD with state-dependent MC data]\label{th: th_proj_sgd_state_dep}
	Let Assumptions~\ref{assumption:A2},~\ref{assumption:A3},~\ref{assumption:A1'},~\ref{assumption:A2'} hold and $(\param_{t})_{t\ge 1}$ be a sequence generated by Algorithm \ref{algorithm:prox_grad}. A complexity result as in Theorem \ref{th: th_proj_sgd} \textbf{(i)}  still hold with possibly different constants. See Theorem \ref{thm:PSGD_state_dependent_MC} for details.
\end{theorem}

While Lemma \ref{lem:TV_linear_change_bound} was the key to establish convergence of PSGD (Theorem \ref{th: th_proj_sgd}) under the mixing condition in \cref{assumption:A1}, a similar role is played by the solution of Poisson equation stated in \cref{assumption:A2'} for the state-dependent case. The proof of Theorem \ref{th: th_proj_sgd_state_dep} follows the same lines as Theorem~\ref{th: th_proj_sgd} using a similar analysis as in  \citep{karimi2019non} for the bias and properties of the sequences $\hat\param_t, \param_t$. See Appendix \ref{sec: we3}. 
	
	\subsection{Complexity for Constrained Smooth  Optimization with Dependent Data}\label{sec: smooth}
	We next compare our complexity with the one derived in~\citep{lyu2022convergence} for constrained smooth nonconvex optimization with dependent data which, to our knowledge, is the only complexity result for this setting.
	First, we introduce the next assumption to replace~\cref{assumption:A2}.
	We next show how to translate our result to a direct stationarity measure in view of Sec.~\ref{subsec: direct_stat} to compare with the $\tilde O(\varepsilon^{-8})$ complexity result in~\citep{lyu2022convergence} for an equivalent stationarity measure (see Sec.~\ref{subsec: direct_stat} for details). The proof of the result is given in Appendix~\ref{app: smooth}.
	\begin{theorem}[Sample complexity]\label{th: th_proj_sgd_smooth}
		Let \cref{assumption:A2}-\ref{assumption:A3} hold and $(\param_{t})_{t\ge 1}$ be a sequence generated by any of the Algorithms \ref{algorithm:prox_grad}, \ref{algorithm:adagrad}, and \ref{algorithm:shb}.
		Fix $\hat{\rho}>\rho$, assume $\Delta_{[t- k_t, t+1]} = O(\lambda^{k_t})$ for $\lambda < 1$ and that $\Param$  is compact. Pick $\hat t$ randomly from $\{1,\dots,T\}$ as in the respective theorems for the algoritms, let $\hat N=O(\varepsilon^{-2})$, and define 
		\begin{equation}
			\breve{\param}_{\hat t+1} = \proj_{\Param}\bigg(\param_{\hat t} - \frac{1}{\hat N}\sum_{i=1}^{\hat N} \nabla \ell(\param_{\hat t}, \x^{(i)})) \bigg). 
		\end{equation}
		Then $\mathbb{E}\left[\mathrm{dist}(\mathbf{0}, \partial(f+\iota_{\Param})(\breve{\param}_{\hat t+1}))\right] \leq \varepsilon$ with $\tilde{O}(\varepsilon^{-4})$ samples. 
	\end{theorem}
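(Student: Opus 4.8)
The plan is to translate the Moreau-envelope gradient bound already established for the randomly chosen iterate $\param_{\hat t}=\param_\tau$ into a guarantee on the \emph{direct} stationarity measure $\dist(\mathbf 0,\partial\varphi(\breve{\param}_{\hat t+1}))$, where $\varphi=f+\iota_{\Param}$ as in~\eqref{eq:def_varphi_objective} and the exact gradient $\nabla f(\param_{\hat t})=\E_{\x\sim\pi}[G(\param_{\hat t},\x)]$ is replaced by the Monte-Carlo estimate $\hat g_{\hat t}:=\frac1{\hat N}\sum_{i=1}^{\hat N}\nabla\ell(\param_{\hat t},\x^{(i)})$. First I would apply the relevant rate (Theorem~\ref{th: th_proj_sgd}, \ref{th: th_adagrad} or~\ref{th: th_shb}, with output $\param_\tau$) to get $\E[\lVert\nabla\varphi_{1/\hat\rho}(\param_{\hat t})\rVert^2]=\tilde{O}(T^{-1/2})$, so Jensen gives $\E[\lVert\nabla\varphi_{1/\hat\rho}(\param_{\hat t})\rVert]=\tilde{O}(T^{-1/4})$; compactness of $\Param$ ensures $\inf\varphi_{1/\hat\rho}>-\infty$ so these bounds are meaningful. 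Then the gradient-mapping/Moreau-gradient comparison $\lVert\mathcal G_{1/2\hat\rho}(\param)\rVert\le\tfrac32\lVert\nabla\varphi_{1/\hat\rho}(\param)\rVert$ (see~\cite{davis2019stochastic,drusvyatskiy2018error}) yields $\E[\lVert\param_{\hat t}-\tilde{\param}_{\hat t}\rVert]=\tilde{O}(T^{-1/4})$ for the exact gradient-mapping point $\tilde{\param}_{\hat t}:=\proj_{\Param}(\param_{\hat t}-\tfrac1{\hat\rho}\nabla f(\param_{\hat t}))$ (the step-size constant being the only thing affected).

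The core deterministic step is a first-order optimality computation at $\breve{\param}:=\breve{\param}_{\hat t+1}=\proj_{\Param}(\param_{\hat t}-\tfrac1{\hat\rho}\hat g_{\hat t})$. The projection inequality gives $\hat\rho(\param_{\hat t}-\breve{\param})-\hat g_{\hat t}\in N_{\Param}(\breve{\param})$; since $f$ is smooth under~\ref{assumption:A2prime} we have $\partial\varphi(\breve{\param})=\nabla f(\breve{\param})+N_{\Param}(\breve{\param})$, and adding $\nabla f(\breve{\param})$ to the previous inclusion produces an explicit element of $\partial\varphi(\breve{\param})$, whence
\begin{align}
\dist(\mathbf 0,\partial\varphi(\breve{\param}))&\le\lVert\nabla f(\breve{\param})-\hat g_{\hat t}+\hat\rho(\param_{\hat t}-\breve{\param})\rVert\\
&\le(\rho+\hat\rho)\lVert\breve{\param}-\param_{\hat t}\rVert+\lVert\nabla f(\param_{\hat t})-\hat g_{\hat t}\rVert,
\end{align}
the last line using $\rho$-Lipschitzness of $\nabla f$. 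Nonexpansiveness of the projection gives $\lVert\breve{\param}-\tilde{\param}_{\hat t}\rVert\le\tfrac1{\hat\rho}\lVert\hat g_{\hat t}-\nabla f(\param_{\hat t})\rVert$, hence $\lVert\breve{\param}-\param_{\hat t}\rVert\le\lVert\param_{\hat t}-\tilde{\param}_{\hat t}\rVert+\tfrac1{\hat\rho}\lVert\hat g_{\hat t}-\nabla f(\param_{\hat t})\rVert$. Substituting and taking expectations, the first paragraph controls the $\lVert\param_{\hat t}-\tilde{\param}_{\hat t}\rVert$ contribution by $\tilde{O}(T^{-1/4})$, reducing everything to bounding the sampling error $\E[\lVert\hat g_{\hat t}-\nabla f(\param_{\hat t})\rVert]$.

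For the sampling error I would condition on the history through time $\hat t$ (with respect to which $\param_{\hat t}$ is deterministic) and split $\E[\lVert\hat g_{\hat t}-\nabla f(\param_{\hat t})\rVert]$ into a bias part and a fluctuation part. By~\ref{assumption:A3} every summand has norm at most $L$, so the fluctuation of the average decays at the i.i.d.\ root-mean-square rate up to a mixing-time factor, while under~\ref{assumption:A1} the conditional bias of each $\nabla\ell(\param_{\hat t},\x^{(i)})$ against $\nabla f(\param_{\hat t})$ is controlled by $L\,\Delta$ and is negligible after a logarithmic burn-in; Jensen then gives $\E[\lVert\hat g_{\hat t}-\nabla f(\param_{\hat t})\rVert]=\tilde{O}(\hat N^{-1/2})$. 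Choosing $\hat N=\tilde{O}(\varepsilon^{-2})$ makes this $O(\varepsilon)$ and choosing $T=\tilde{O}(\varepsilon^{-4})$ makes the optimization term $O(\varepsilon)$; since $\breve{\param}_{\hat t+1}$ is formed only once, the total budget is $T+\hat N=\tilde{O}(\varepsilon^{-4})$. The main obstacle is precisely this sampling step: one must ensure that a plug-in gradient built from \emph{dependent} samples still attains the $\tilde{O}(\hat N^{-1/2})$ rate, i.e.\ that the non-stationarity bias is dominated once the logarithmic burn-in is absorbed into $\tilde{O}(\cdot)$, whereas the passage from $\nabla\varphi_{1/\hat\rho}$ to the subdifferential distance is routine given the earlier theorems.
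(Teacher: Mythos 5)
Your proposal is correct and takes essentially the same route as the paper: the paper proves this theorem by combining the gradient-mapping/Moreau-gradient comparison $\|\mathcal{G}_{1/2\hat\rho}(\param)\| \leq \frac{3}{2}\|\nabla\varphi_{1/\hat\rho}(\param)\|$ with Lemma~\ref{lem: post_process_output}, whose proof is exactly your deterministic decomposition (projection optimality yielding an element of $\partial(f+\iota_{\Param})(\breve{\param})$, then triangle inequality, $\rho$-smoothness of $\nabla f$, and nonexpansiveness of $\proj_{\Param}$), followed by the bound $\E\|\tilde{\nabla}f(\param_{\hat t})-\nabla f(\param_{\hat t})\| = O(\hat{N}^{-1/2})$ for dependent samples. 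The only differences are presentational: the paper obtains that last concentration bound by citing \cite[Lem.~7.1]{lyu2022convergence} (Lemma~\ref{lem:f_n_concentration_L1_gen}) rather than sketching the bias/fluctuation argument as you do, and you use step size $1/\hat\rho$ in the final projection where the statement uses a unit step, which affects only constants.
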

	The assumption on $\Delta_{[t- k_t, t+1]}$ and hence the dependent sampling is consistent with the related works~\citep{lyu2022convergence,sun2018markov}.

	Even though gradient of the Moreau envelope is a \emph{near approximate stationarity} measure, in the specific case of this section, we show that we can output a point that is \emph{approximately stationary} with respect to the direct stationarity measure in Prop.~\ref{prop:approx_stationarity}(i) (also mentioned in Section~\ref{subsec: station_meas}).
	This permits a direct comparison with the previous result on constrained nonconvex optimization with dependent data~\citep{lyu2022convergence} and shows our improvement.
	Lemma~\ref{lem: post_process_output} in the appendix gives the necessary post-processing step for this, which is used in Theorem~\ref{th: th_proj_sgd_smooth}.

	\section{Application: Online Dictionary Learning}
	\label{section:ODL}
	Consider the \textit{online dictionary learning} (ODL) problem, which is stated as the stochastic program 
	\begin{equation}\label{eq:ODL_f}
		\begin{aligned}
			\min&_{\param \in \Param\subseteq \R^{p\times r} } \Big( f(\param):=\E_{\X\sim\pi}\left[ \ell(\X,\param) \right]\Big) \text{~where} \\
			&\ell(\X,\param) :=\inf_{H\in \Param' \subseteq \R^{p\times n}} d\left(\X  , \param H\right) + R(H)
		\end{aligned}
	\end{equation}
	where $d(\cdot,\cdot):\R^{p\times n}\times \R^{p\times n}\rightarrow [0,\infty)$ is a multi-convex function that measures dissimilarity between two $p\times n$ matrices (e.g., the squared frobenius norm, KL-divergence), $R:\R^{p\times n}\rightarrow [0,\infty)$ denotes a convex regularizer for the code matrix $H$, and $r$ is an integer parameter for the rank of the intended compressed representation of data matrix $\X$. In words, we seek to learn a single dictionary matrix $\param\in \R^{p\times r}$ within the constraint set $\Param$ (e.g., nonnegative matrices with bounded norm), which provides the best linear reconstruction (w.r.t. the $d$-metric) of an unknown random matrix $X$ drawn from some distribution $\pi$. Here, we may put $L_{1}$-regularization on $H$ in order to promote dictionary $\param$ that enable sparse representation of observed data. 
	
	The most extensively investigated instance of the above ODL problem is when $d$ equals the squared Frobenius distance. In this case, Mairal et al. \citep{mairal2010online} provided an online algorithm based on the framework of stochastic majorization-minimization \citep{mairal2013stochastic}. 
	A well-known result in \citep{mairal2010online} states that the above algorithm converges almost surely to the set of stationary points of the expected loss function $f$ in \eqref{eq:ODL_f}, provided the data matrices $(\X_{t})_{t\ge 1}$ are i.i.d. according to the stationary distribution $\pi$. Later Lyu, Needell, and Balzano \citep{lyu2020online} generalized the analysis to the case where $(\X_{t})$ are given by a function of some underlying Markov chain. Recently, \citet{lyu2022convergence} provided the first convergence rate bound of the ODL algorithm in \citet{mairal2010online} of order $O((\log t)^{1+\eps}/t^{1/4})$ for the empirical loss function and $O((\log t)^{1+\eps}/t^{1/8})$ for the expected loss function for arbitrary $\eps>0$.

 	\begin{figure*}[ht!]
		\centering
		\includegraphics[width=1\linewidth]{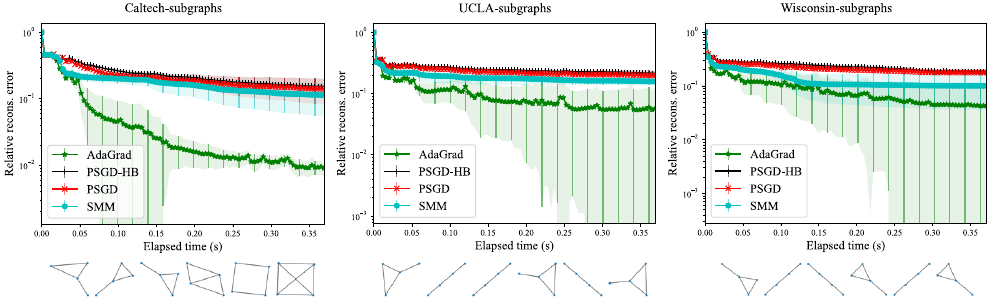}
		\vspace{-0.5cm}
		\caption{Plot of reconstruction error vs. elapsed time for four algorithms for online NMF: AdaGrad, PSGD-Heavy Ball, PSGD, and SMM. Data stream is a sequence of 4-node subgraph adjacency matrices sampled by an MCMC motif-sampling algorithm in \cite{lyu2023sampling} from three college Facebook networks \cite{traud2012social}. Six consecutive Markovian samples of subgraphs are shown in each plot. Shaded region represents one standard deviation from ten runs.}
		\label{fig:exp}
	\end{figure*}
	
	Suppose we are given a sequence of data matrices $(\X_{t})_{t\ge 1}$ that follows $\pi$ in some asymptotic sense. Under some mild assumptions, one can compute the subgradient of the loss function $\param\mapsto \ell(\X_{t},\param)$ in two steps and can perform a standard stochastic projected gradient descent: 
	\begin{align}\label{eq:ODL_PSGD}
		\begin{cases}
			&H_{t} \leftarrow \argmin_{H\in \Param'} d\left(\X  , \param_{t-1} H\right) +\lambda \lVert H \rVert_{1},  \\
			&G(\param_{t-1},\X_{t}) =\nabla_{\param} d(\X_{t},\param_{t-1} H_{t}), \\
			& \param_{t}\leftarrow \textup{Proj}_{\Param}\left(  \param_{t-1} - \alpha_{t}  G(\param_{t-1},\X_{t}) \right).
		\end{cases}
	\end{align}
	For instance, consider the following standard assumption on `uniqueness of sparse coding problem':

	\begin{assumption} \label{assumption:A5}
		For each $\X$ and $\param$, $\inf_{H\in \Param'\subseteq \R^{p\times n}} d(\X, \param H) + R(H)$ admits a unique solution in $\Param'\subseteq \R^{p\times n}$. 
	\end{assumption}
	\noindent Note that \cref{assumption:A5} is trivially satisfied if $R(H)$ contains a regularization $\kappa_{2} \lVert H \rVert^{2}_{F}$ for some $\kappa_{2}>2$. Under \cref{assumption:A5}, Danskin's theorem \citep{bertsekas1997nonlinear} implies that the function $\param\mapsto \ell(\X,\param)$ is differentiable and satisfies $\nabla_{\param} \ell(\X,\param)= \nabla_{\param} d(\X, \param \H^{\star})$, where $\H^{\star}$ is the unique solution of $\inf_{H\in \Param'\subseteq \R^{p\times n}} d(\X, \param H) + \lambda \lVert H\rVert_{1}$. Hence we may choose $G(\param_{t-1},\X_{t})=\nabla_{\param} d(\X_{t}, \param \H_{t})$ in \eqref{eq:ODL_PSGD}.

	Notice that \eqref{eq:ODL_PSGD} is a projected SGD algorithm for the ODL problem \eqref{eq:ODL_f}, which is a constrained nonconvex problem.  Zhao et al. \citep{zhao2017online} provided asymptotic analysis of this algorithm  (especially for online nonnegative matrix factorization) for general dissimilarity metric $d$. For a wide class of dissimilarity metrics such as  Csiz\'{a}r $f$-divergence, Bregman divergence, $\ell_{1}$ and $\ell_{2}$ metrics, and Huber loss, this work showed that when the data matrices are i.i.d. and the stepsizes $\alpha_{t}$ are non-summable $(\sum_{t=1}^{\infty} \alpha_{t}=\infty)$ and square-summable $(\sum_{t=1}^{\infty} \alpha_{t}^{2}<\infty)$, then the sequence of dictionary matrices $(\param_{t})_{t\ge 1}$ obtained by \eqref{eq:ODL_PSGD}, regardless of initialization, converges almost surely to the set of stationary points of \eqref{eq:ODL_f}. The asymptotic analysis uses a rather involved technique inspired from dynamical systems literature and does not provide a rate of convergence. Moreover, such asymptotic guarantees has not been available to the more general Markovian data setting. 

	When the function $\param\mapsto \ell(\X,\param)$ for each $\X$ is $\rho$-weakly convex for some $\rho>0$, then the expected loss function  in \eqref{eq:ODL_f} is also $\rho$-weakly convex, so in this case a direct application of the main result in \citep{davis2020stochastic} would yield a rate of convergence $O((\log t)/t^{1/4})$ for \eqref{eq:ODL_PSGD} with i.i.d. data matrices $X_{t}$. Such hypothesis of weak convexity of the loss function is implied under smoothness of $d$ (Assumption \ref{assumption:A6}). Then our main results extends the theoretical guarantees for \eqref{eq:ODL_PSGD} to more general setting when $(\X_{t})$ are given as a function of some underlying Markov chain with exponential mixing, and also extends to other variants of PSGD such as the AdaGrad (Algorithm \ref{algorithm:adagrad}) and the stochastic heavy ball (Algorithm \ref{algorithm:shb}). The full statement of this result for ODL with stochastic first-order methods on non-i.i.d. data is stated in \cref{cor: odl} in Appendix \ref{sec:ODL_app}. \textit{To our best knowledge, this is the first time that projected SGD with adaptive step sizes has been applied to ODL problems with optimal complexity bounds for the general Markovian data case.} 

	\paragraph{Numerical validation.}

	We now illustrate the empirical performance of Alg.~\ref{algorithm:prox_grad},~\ref{algorithm:adagrad},~\ref{algorithm:shb} to verify our theoretical findings for ODL with dependent data. However, we highlight that the main contribution of our paper is \emph{theoretical}: obtaining the optimal complexity for constrained nonconvex problems with dependent data.
	Moreover, the algorithms we analyze, namely, projected SGD, SGD with momentum and AdaGrad are the default solvers in most of the libraries for machine learning/deep learning such as PyTorch/TensorFlow and their empirical success is well-established.
	Hence, the results here are not meant to be complete benchmarks, but rather empirical support for our theory.

	For generating the samples, we used networks for 3 different schools (\texttt{Caltech36, UCLA26, Wisconsin87}) from the Facebook100 dataset~\citep{traud2012social}, following a similar setup to~\citep{lyu2020online}.
	We then used the Markov Chain Monte Carlo (MCMC) algorithm of~\citep{lyu2023sampling} to generate $300$ correlated subgraphs from the networks.
	We then used the resulting matrix as a stream of Markovian data and stopped the algorithms once all the samples are used.
	For comparison, we used the stochastic  majorization-minimization (SMM) algorithm from~\citep{mairal2010online,lyu2020online}, which is the state-of-the-art algorithm for ODL problems. 
	
	In Fig.~\ref{fig:exp}, we see convergence of all the algorithms with respect to the normalized reconstruction error, which is in line with our theoretical results. Moreover, we observe that AdaGrad converges significantly faster than other methods, especially for the sequence of subgraphs from \texttt{Caltech}. The difference in speed of convergence between all methods is marginal for the \texttt{UCLA} and \texttt{Wisconsin}. We suspect that this different behavior is realated to the fact that subgraphs in \texttt{Caltech} induced on random paths of $k=4$ nodes are more likely to contain more edges than those from the other two (much sparser) networks. 

	
	\section{Conclusion} 
	
	In this paper, we have established convergence and complexity results of a wide range of classcial stochastic first-order methods (PSGD, AdaGrad, PSGD-Momentum) under general non-i.i.d. data sampling assumption. Our results show that if the dependence in data samples decays in the length of conditioned steps via MC mixing or Poisson equation, then standard rate of convergence in the i.i.d. case is extended to the more general non-i.i.d. case. Our analysis shows that independence between data samples is not really needed in analyzing stochastic first-order method. We also numerically verified our results on the problem of online dictionary learning from subgraph samples generated by an MCMC algorithm. 

    \section*{Acknowledgements}
    The authors are grateful to Stephen Wright for valuable discussions. The work of A. Alacaoglu was supported by NSF awards 2023239 and 2224213; and DOE ASCR Subcontract 8F-30039 from Argonne National Laboratory. The work of H. Lyu was partially supported by NSF grants DMS-2010035 and DMS-2206296.
    
	\bibliographystyle{apalike}
	\bibliography{mybib}
	
	\newpage
	\appendix
	\onecolumn

	\section{Background on stationarity measures}
	\label{sec:backgrounds}
	
	\subsection{Direct stationarity measures}\label{subsec: direct_stat}
	
	In this subsection, we introduce some notions on stationarity conditions and related quantities. A first-order necessary condition for $\param^{*}\in \Param$ to be a first order stationary point of $f$ over $\Param$ is that there exists a subgradient $v\in \partial f(\param^{*})$ such that $-v$ belongs to the normal cone $N_{\Param}(\param^{*})=\partial \iota_{\Param}(\param^{*})$, which is also equivalent to the variational inequality $\inf_{\param\in \Param} \, \langle v ,\,  \param - \param^{*} \rangle \ge 0$ due to the definition of the normal cone. Hence we introduce the following notion of first-order stationarity for constrained minimization problem:
	\begin{align}
		\textup{$\param^{*}$ is a \textit{stationary point} of $f$ over $\Param$} &\quad \overset{\textup{def}}{\Longleftrightarrow} \quad \mathbf{0}\in v + N_{\Param}(\param^{*}) \quad \textup{for some $v\in \partial f(\param^{*})$} \\
		&\quad \Longleftrightarrow \quad 
		\inf_{\param\in \Param} \, \langle v ,\,  \param - \param^{*} \rangle \ge 0 \quad  \text{for some $v\in \partial f(\param^{*})$}.
	\end{align}
	Note that if $\param^{*}$ is in the interior of $\Param$, then the above is equivalent to $\mathbf{0}\in \partial f(\param^{*})$. Furthermore, if $f$ is differentiable at $\param^{*}$, this is equivalent to $\nabla f(\param^{*})=\mathbf{0}$, so $\param^{*}$ is a \textit{critical point} of $f$. 
	
	In view of the preceding discussion and~\eqref{eq:def_varphi_objective}, we can also say that $\param$ is a stationary point of $f$ over $\Param$ if and only if $\mathbf{0}\in  \partial \varphi(\hat{\param}))$. Accordingly, we may use $\textup{dist}(\mathbf{0}, \partial \varphi(\hat{\param})) = 0$ as an equivalent notion of stationarity. 
	
 We relax the above first-order optimality conditions as follow: For each $\eps>0$, 
	\begin{align}\label{eq:def_eps_stationarity}
		\textup{$\param^{*}$ is an \textit{$\eps$-stationary point} for $f$ over $\Param$} &\quad \overset{\textup{def}}{\Longleftrightarrow} \quad
		\textup{dist}(\mathbf{0}, \partial \varphi(\param^{*}) ) \le \eps.
	\end{align}
	An alternative formulation of $\eps$-stationarity would be using the `stationarity gap'. Namely, we observe the following identity: 
	\begin{align}\label{eq:def_gap_stationarity}
		\textup{Gap}_{\Param}(f,\param^{*}):= \inf_{v\in \partial f(\param^{*})} \left[  -\inf_{\param\in \Param\setminus \{\param^{*}\} }\, \left\langle v,\, \frac{\param-\param^{*}}{\lVert\param-\param^{*} \rVert} \right\rangle \right]=\textup{dist}(\mathbf{0}, \partial \varphi(\param^{*}) ),
	\end{align}
	which is justified in Proposition \ref{prop:approx_stationarity} in Appendix \ref{sec: appendix}. We call the quantity $\textup{Gap}_{\Param}(f,\param^{*})$ above the \textit{stationarity gap} at $\param^{*}$ for $f$ over $\Param$. This measure of approximate stationarity was used in  \citep{lyu2020convergence, lyu2022convergence}, and it is also equivalent to a similar notion in \cite{nesterov2013gradient}. When $\param^{*}$ is in the interior of $\Param$ and if $f$ is differentiable at $\param^{*}$, then \eqref{eq:def_eps_stationarity} is equivalent to $\lVert \nabla f(\param^{*}) \rVert\le \eps$. In Proposition \ref{prop:approx_stationarity}, we provide an equivalent definition of $\eps$-stationarity using the normal cone.

	\section{Preliminary Results}\label{sec: appendix}
	
	The next result illustrates the connection between the two stationarity measures given in~\eqref{eq:def_gap_stationarity} to compare with the existing result in~\citep{lyu2022convergence}.
	Recall that the \textit{normal cone} $N_{\Param}(\param^{*})$ of $\Param$ at $\param^{*}$ is defined as 
	\begin{align}
		N_{\Param}(\param^{*}) :=\left\{ u\in \R^{p}\,|\, \langle u,\, \param-\param^{*} \rangle \le 0 \, \forall \param\in \Param \right\}.
	\end{align}
	Note that the normal cone $N_{\Param}(\param^{*})$ agrees with the subdifferential $\partial \iota_{\Param}(\param^{*})$. When $\Param$ equals the whole space $\R^{p}$, then $N_{\Param}(\param)=\{\mathbf{0}\}$.  
	\begin{proposition}\label{prop:approx_stationarity}
		For each $\param^{*}\in \Param$, $v\in \partial f(\param^{*})$, and $\eps>0$, following conditions are equivalent: 
		\begin{description}[itemsep=0.1cm]
			\item[(i)] $\textup{dist}(\mathbf{0}, v+N_{\Param}(\param^{*}))\le \eps$;
			\item[(ii)] $\displaystyle  -\inf_{\param\in \Param \setminus\{ \param^{*} \} }\, \left\langle v,\, \frac{\param-\param^{*}}{\lVert\param-\param^{*} \rVert} \right\rangle  \le \eps$.
		\end{description}
		In particular, it holds that 
		\begin{align}\label{eq:def_gap_stationarity_pf}
			\textup{dist}(\mathbf{0}, \partial f(\param^{*}) + N_{\Param}(\param^{*}) )= \inf_{v\in \partial f(\param^{*})} \left[  -\inf_{\param\in \Param\setminus\{ \param^{*} \}}\, \left\langle v,\, \frac{\param-\param^{*}}{\lVert\param-\param^{*} \rVert} \right\rangle \right].
		\end{align}
	\end{proposition}    
	
	\begin{proof}
		The last statement follows from the equivalence of \textbf{(i)} and \textbf{(ii)}. In order to show the equivalence, first suppose \textbf{(i)} holds. Then there exists $u\in N_{\Param}(\param^{*})$ and $w\in B_{\eps}$ where $B_{\eps}$ is the $\eps$-ball in $\ell_2$ norm, such that $v+u+w=\mathbf{0}$. So $-v-w\in N_{\Param}(\param^{*})$, which is equivalent to 
		\begin{align}
			\inf_{\param\in \Param\setminus\{ \param^{*} \}}\, \left\langle v+w,\, \frac{\param-\param^{*}}{\lVert\param-\param^{*} \rVert} \right\rangle  \ge 0.
		\end{align}
		By Cauchy-Schwarz inequality, this implies 
		\begin{align}
			-\inf_{\param\in \Param\setminus\{ \param^{*} \}}\, \left\langle v,\, \frac{\param-\param^{*}}{\lVert\param-\param^{*} \rVert} \right\rangle \le \lVert w \rVert \le \eps.
		\end{align}
		
		Conversely, suppose \textbf{(ii)}  holds. We let $\mathcal{D}_{\le 1}(\param^{*})$ denote the set of all feasible directions at $\param^{*}$ of norm bounded by 1, which consists of vectors of form $a(\param-\param^{*})$ for $\param\in \Param$ and $a\in (0,\lVert \param-\param^{*} \rVert^{-1}]$. 
		Being the intersection of two convex sets, $\mathcal{D}_{\le 1}(\param^{*})$ is convex. Then applying the minimax theorem~\cite{sion1958general} for the bilinear map $(x,u)\mapsto \langle v+\eps u, x\rangle$ defined on the product of convex sets $\mathcal{D}_{\le 1}(\param^{*})\times B_{1}$, observe that 
		\begin{align}
			\sup_{u\in B_{1}} \inf_{x\in \mathcal{D}_{\le 1}(\param^{*})} \, \langle v + \eps u ,\, x \rangle &= \inf_{x\in \mathcal{D}_{\le 1}(\param^{*})} \sup_{u\in B_{1}}  \, \langle v + \eps u ,\, x \rangle \label{eq: re3} \\
			&=\inf_{x\in \mathcal{D}_{\le 1}(\param^{*})}\left[   \langle v ,\, x \rangle +\sup_{u\in B_{1}}  \langle \eps u ,\, x \rangle \right]\\
			&=\inf_{x\in \mathcal{D}_{\le 1}(\param^{*})} \left[   \langle v ,\, x \rangle + \eps\lVert x\rVert \right]\\
			&=\inf_{x\in \mathcal{D}_{\le 1}(\param^{*})} \lVert x\rVert \left[   \left\langle v ,\, \frac{x}{\lVert x \rVert} \right\rangle + \eps \right] \ge 0.
		\end{align}
		To see the last inequality, fix $x\in \mathcal{D}_{\le 1}(\param^{*})$. By definition, there exists some $\param_{x}\in \Param$ such that $x/\lVert x \rVert = \frac{\param_{x}-\param^{*}}{\lVert \param_{x} - \param^{*} \rVert}$. Then by using \textbf{(ii)},
		\begin{align}
			\left\langle v ,\, \frac{x}{\lVert x \rVert} \right\rangle + \eps = \left\langle v ,\, \frac{\param_{x}-\param^{*}}{\lVert \param_{x} - \param^{*} \rVert} \right\rangle + \eps \ge \inf_{\param\in \Param\setminus\{ \param^{*} \}}\left\langle v,\, \frac{\param-\param^{*}}{\lVert \param - \param^{*} \rVert} \right\rangle + \eps \ge -\eps + \eps \ge 0.
		\end{align}
		Attainment of the supremum at a $u^*$ in~\eqref{eq: re3} is guaranteed by strong duality, see~\cite{bauschke2011convex}.
		
		The above implies
		\begin{align}
			\inf_{x\in \mathcal{D}_{\le 1}(\param^{*})} \, \langle v + \eps u^{*} ,\, x \rangle  \ge 0.
		\end{align}
		Thus we conclude that $-v-\eps u^{*}\in N_{\Param}(\param^{*})$. Then \textbf{(i)}  holds since $\lVert u^{*} \rVert\le 1$. 
	\end{proof}

	\begin{proposition}\label{prop:stationarty_gap_moreau}
		Suppose $f$ is $\rho$-weakly convex and $\lambda<\rho^{-1}$. Then for each $\param \in \Param$,
		\begin{align}
			\sup_{v\in \partial f(\hat{\param})} \left[ - \inf_{\param'\in \Param \setminus \{ \hat{\param} \} } \left\langle v(\hat{\param}),\,  \frac{\param'-\hat{\param}}{\lVert \param'-\hat{\param} \rVert} \right\rangle  \right] \le \lambda^{-1} \lVert \hat{\param}-\param \rVert \le \lambda^{-2}  \lVert \nabla \varphi_{\lambda}(\param) \rVert 
		\end{align}
	\end{proposition}

	\begin{proof}
		Recall that $\hat{\param}$ is the solution of a constrained optimization problem since $\varphi = f + \iota_{\Param}$~\eqref{eq:proximal_point}.
		Therefore, it satisfies the following first-order optimality condition: For some $v(\hat{\param})\in \partial f(\hat{\param})$, 
		\begin{align}\label{eq:prox_optimality}
			\langle v(\hat{\param}) + \lambda^{-1} (\hat{\param} - \param) ,\, \param'-\hat{\param} \rangle \ge 0, \qquad \forall \param'\in \Param .
		\end{align}
		By rearranging and using Cauchy-Schwarz, this yields for all $\param'\in \Param$, 
		\begin{align}\label{eq:prox_optimality1}
			\langle v(\hat{\param}) ,\, \hat{\param} - \param' \rangle \leq \lambda^{-1} \langle  \hat{\param} - \param ,\, \param'-\hat{\param} \rangle \le \lambda^{-1} \lVert \hat{\param}-\param \rVert\cdot  \lVert \param'-\hat{\param} \rVert,
		\end{align}
		Now assume $\param'\ne \hat{\param}$. Dividing both sides by $\lVert \param'-\hat{\param} \rVert$,  we get 
		\begin{align}
			- \left\langle v(\hat{\param}),\,  \frac{\param'-\hat{\param}}{\lVert \param'-\hat{\param} \rVert} \right\rangle   \le \lambda^{-1} \lVert \hat{\param}-\param \rVert =  \lambda^{-2}  \lVert \nabla \varphi_{\lambda}(\param) \rVert.
		\end{align}
		Since this holds for all $v(\hat{\param})\in \partial f(\hat{\param})$ and $\param'\in \Param\setminus \{\hat{\param} \} $, the assertion follows. 
	\end{proof}

	The next two results will be used in Lem.~\ref{lem:TV_linear_change_bound} to control the bias due to dependent data.
	\begin{lemma}[\cite{rockafellar2009variational}]\label{lem: prox_wc_lips}
		For any $\hat\rho\geq \rho$ and $\rho$-weakly convex function $\varphi$, it follows that $\param\mapsto \prox_{\varphi/\hat\rho}(\param)$ is $\frac{\hat\rho}{\hat\rho-\rho}$-Lipschitz.
	\end{lemma}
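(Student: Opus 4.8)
The plan is to derive the Lipschitz bound directly from the first-order optimality conditions satisfied by the two proximal points, combined with the hypomonotonicity of the subdifferential of a weakly convex function. Throughout I would work under the strict inequality $\hat{\rho}>\rho$, which is implicit in the statement: otherwise the claimed constant $\frac{\hat{\rho}}{\hat{\rho}-\rho}$ is not even finite, and the minimizer defining the proximal map need not be unique.

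First I would fix two points $\param_{1},\param_{2}$ and set $\hat{\param}_{i}=\prox_{\varphi/\hat{\rho}}(\param_{i})$ for $i=1,2$. Recalling the definition in~\eqref{eq:proximal_point} with $\lambda=\hat{\rho}^{-1}$, each $\hat{\param}_{i}$ is the unique minimizer of $\param'\mapsto \varphi(\param')+\frac{\hat{\rho}}{2}\lVert \param'-\param_{i}\rVert^{2}$. Uniqueness holds because this objective is $(\hat{\rho}-\rho)$-strongly convex: it is the sum of the convex function $\varphi(\cdot)+\frac{\rho}{2}\lVert\cdot\rVert^{2}$, the $(\hat{\rho}-\rho)$-strongly convex quadratic $\frac{\hat{\rho}-\rho}{2}\lVert\cdot\rVert^{2}$, and affine terms in $\param'$. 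Writing out the stationarity condition for each subproblem, I obtain subgradients $v_{i}\in\partial\varphi(\hat{\param}_{i})$ with $v_{i}=\hat{\rho}(\param_{i}-\hat{\param}_{i})$.

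Next I would invoke the $\rho$-hypomonotonicity of $\partial\varphi$, a direct consequence of $\rho$-weak convexity (see \cite[Lem. 2.1]{davis2019stochastic}), which gives
\begin{align}
    \langle v_{1}-v_{2},\, \hat{\param}_{1}-\hat{\param}_{2}\rangle \ge -\rho\lVert \hat{\param}_{1}-\hat{\param}_{2}\rVert^{2}.
\end{align}
Substituting $v_{i}=\hat{\rho}(\param_{i}-\hat{\param}_{i})$ and rearranging yields
\begin{align}
    \hat{\rho}\,\langle \param_{1}-\param_{2},\, \hat{\param}_{1}-\hat{\param}_{2}\rangle \ge (\hat{\rho}-\rho)\lVert \hat{\param}_{1}-\hat{\param}_{2}\rVert^{2}.
\end{align}
Applying Cauchy--Schwarz to the left-hand side and dividing by $\lVert \hat{\param}_{1}-\hat{\param}_{2}\rVert$ (the case $\hat{\param}_{1}=\hat{\param}_{2}$ being trivial) gives exactly $\lVert \hat{\param}_{1}-\hat{\param}_{2}\rVert\le \frac{\hat{\rho}}{\hat{\rho}-\rho}\lVert \param_{1}-\param_{2}\rVert$, as claimed.

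The computation is entirely routine, so I do not expect a genuine obstacle; the single conceptual point is the hypomonotonicity estimate, which is where weak convexity enters and which produces the $-\rho$ term responsible for the $\hat{\rho}-\rho$ in the denominator. The only care needed is bookkeeping around the strict inequality $\hat{\rho}>\rho$, required both for uniqueness of the proximal point and for finiteness of the constant. Since the result is also standard (it is cited here from \cite[Prop. 12.19]{rockafellar2009variational}), an alternative would be to simply invoke that reference, but the self-contained argument above is short and keeps the paper independent of the precise formulation used there.
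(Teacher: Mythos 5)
Your proof is correct. Note, however, that the paper never proves this lemma itself: it is stated as a quotation of \cite[Prop.~12.19]{rockafellar2009variational}, so the paper's entire ``proof'' is that citation. Your argument supplies a self-contained derivation, and it is the natural one: first-order optimality of the two proximal subproblems gives $v_i=\hat\rho(\param_i-\hat\param_i)$ with $v_i\in\partial\varphi(\hat\param_i)$; then $\rho$-hypomonotonicity of $\partial\varphi$ (the same fact the paper itself invokes as \cite[Lem.~2.1]{davis2019stochastic} in its proximal section, and which follows from monotonicity of $\partial\bigl(\varphi+\tfrac{\rho}{2}\lVert\cdot\rVert^{2}\bigr)$, so it applies to the paper's Fr\'echet-type subdifferential) combined with Cauchy--Schwarz yields
\begin{align}
    (\hat\rho-\rho)\lVert\hat\param_1-\hat\param_2\rVert^{2}\le \hat\rho\,\langle \param_1-\param_2,\,\hat\param_1-\hat\param_2\rangle\le \hat\rho\,\lVert\param_1-\param_2\rVert\,\lVert\hat\param_1-\hat\param_2\rVert,
\end{align}
which is the claimed bound. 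What your route buys: the paper becomes independent of the exact formulation in the external reference, the argument reuses only tools already present in the paper (strong convexity of the prox subproblem, hypomonotonicity), and it makes explicit that the strict inequality $\hat\rho>\rho$ is what guarantees both single-valuedness of the prox and finiteness of the constant---a point that the lemma's stated hypothesis $\hat\rho\ge\rho$ glosses over. What the citation buys is only brevity. Your handling of the edge cases (uniqueness via the $(\hat\rho-\rho)$-strong convexity decomposition, the trivial case $\hat\param_1=\hat\param_2$ before dividing) is sound, so there is no gap.
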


	\begin{lemma}[\cite{alacaoglu2020convergence}]\label{lem: bdd_xt_xthat}
		Let $\hat\rho\geq\rho$. Then for any $v\in \partial f(\param)$,
		\begin{equation*}
			\| \hat{\param} - \param \| \leq \frac{2\lVert v \rVert}{\hat\rho-\rho}.
		\end{equation*}
	\end{lemma}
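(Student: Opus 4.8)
The plan is to combine the optimality defining the proximal point $\hat{\param}_t = \prox_{\varphi/\hat\rho}(\param_t)$ with the $\rho$-weak convexity of $f$ and the uniform subgradient bound coming from \ref{assumption:A2}--\ref{assumption:A3}. Throughout I use that the algorithm iterate $\param_t$ lies in $\Param$ (it is either the initialization or the image of a projection) and that $\hat{\param}_t \in \Param$ as well, since it minimizes $\param'\mapsto \varphi(\param')+\frac{\hat\rho}{2}\|\param'-\param_t\|^2$ and $\varphi=f+\iota_{\Param}$ takes the value $+\infty$ off $\Param$; hence $\varphi$ coincides with $f$ at both points.

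First I would exploit optimality directly. Since $\hat{\param}_t$ minimizes the proximal objective and $\param_t$ is a feasible competitor at which the quadratic term vanishes,
\[
    f(\hat{\param}_t) + \frac{\hat\rho}{2}\|\hat{\param}_t - \param_t\|^2 \;\le\; f(\param_t),
\]
so that $\frac{\hat\rho}{2}\|\hat{\param}_t-\param_t\|^2 \le f(\param_t)-f(\hat{\param}_t)$. Next I would bound the right-hand side from above via weak convexity: for any $w\in\partial f(\param_t)$ the weakly convex subgradient inequality evaluated at $\hat{\param}_t$ gives
\[
    f(\param_t)-f(\hat{\param}_t) \;\le\; \langle w,\, \param_t-\hat{\param}_t\rangle + \frac{\rho}{2}\|\param_t-\hat{\param}_t\|^2 \;\le\; \|w\|\,\|\param_t-\hat{\param}_t\| + \frac{\rho}{2}\|\param_t-\hat{\param}_t\|^2,
\]
by Cauchy--Schwarz. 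The decisive choice is $w=\E_{\x\sim\pi}[G(\param_t,\x)]$, which is admissible by \ref{assumption:A2} and obeys $\|w\|\le \E_{\x\sim\pi}\|G(\param_t,\x)\|\le L$ by \ref{assumption:A3}.

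Combining the two displays and writing $d:=\|\hat{\param}_t-\param_t\|$ yields $\frac{\hat\rho}{2}d^2\le Ld+\frac{\rho}{2}d^2$, i.e.\ $\frac{\hat\rho-\rho}{2}d^2\le Ld$. When $d>0$ I divide by $d$ to obtain $d\le \frac{2L}{\hat\rho-\rho}$, and when $d=0$ the bound is trivial; this is exactly the claim (vacuous when $\hat\rho=\rho$).

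I do not anticipate a genuine obstacle: the argument is a strongly convex comparison followed by a scalar quadratic inequality. The one point needing care is the subgradient bound. Assumptions~\ref{assumption:A2}--\ref{assumption:A3} control $\|w\|$ only for the averaged surrogate $w=\E_{\x\sim\pi}[G(\param_t,\x)]$, and not for the subgradient that would emerge from the first-order optimality condition of the proximal subproblem, so the proof must commit to evaluating weak convexity at $\param_t$ with this specific $w$ rather than taking a subgradient at $\hat{\param}_t$.
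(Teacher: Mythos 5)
Your proof is correct: prox-optimality of $\hat{\param}_t$ against the competitor $\param_t\in\Param$, the global weak-convexity subgradient inequality with the specific choice $w=\E_{\x\sim\pi}[G(\param_t,\x)]\in\partial f(\param_t)$ (the same inequality and subgradient choice the paper itself uses in its key estimate \eqref{eq: fke3}), and the resulting scalar inequality $\frac{\hat\rho-\rho}{2}d^2\le Ld$ give exactly the claimed bound. The paper does not prove this lemma itself but imports it by citation from \cite[Lem.~1]{alacaoglu2020convergence}, and your argument is essentially that standard one, including the correct observation that one must take the bounded subgradient at $\param_t$ rather than the (normal-cone-contaminated, unbounded) subgradient arising from the optimality condition at $\hat{\param}_t$.
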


	The following lemma is used in converting various finite total variation results into rate of convergence or asymptotic convergence results. 
	
	\begin{lemma}[Lem. A.5 in \cite{mairal2013stochastic}]\label{lem:positive_convergence_lemma}
		Let $(a_{n})_{n\ge 0}$ and $(b_{n})_{n \ge 0}$ be sequences of nonnegative real numbers such that $\sum_{n=0}^{\infty} a_{n}b_{n} <\infty$. Then the following hold.
		
		\begin{description}
			\item[(i)] $\displaystyle \min_{1\le k\le n} b_{k} \le \frac{\sum_{k=0}^{\infty} a_{k}b_{k}}{\sum_{k=1}^{n} a_{k}}  = O\left( \left( \sum_{k=1}^{n} a_{k} \right)^{-1} \right)$.
			
			\vspace{0.2cm}
			\item[(ii)] Further assume $\sum_{n=0}^{\infty} a_{n} = \infty$ and  $|b_{n+1}-b_{n}|=O(a_{n})$. Then $\lim_{n\rightarrow \infty}b_{n} = 0$.
		\end{description}
	\end{lemma}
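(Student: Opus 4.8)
The statement is a classical convergence lemma for nonnegative sequences, and the plan is to treat its two parts separately: the first by a one-line averaging bound, the second by a crossing argument.

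For part \textbf{(i)}, I would set $m:=\min_{1\le k\le n}b_k$ and use nonnegativity of both sequences throughout. Discarding the $k=0$ term and the tail $k>n$ (all nonnegative) gives $\sum_{k=0}^{\infty}a_kb_k\ge\sum_{k=1}^{n}a_kb_k\ge m\sum_{k=1}^{n}a_k$, and dividing by $\sum_{k=1}^{n}a_k>0$ yields the claimed inequality. The $O(\cdot)$ assertion is then immediate, since the middle quantity equals the fixed constant $S:=\sum_{k=0}^{\infty}a_kb_k$ divided by $\sum_{k=1}^{n}a_k$, hence is $O\big((\sum_{k=1}^{n}a_k)^{-1}\big)$.

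For part \textbf{(ii)}, I would argue by contradiction, assuming $L:=\limsup_n b_n>0$. First I would record that $\liminf_n b_n=0$: otherwise $b_n\ge c>0$ eventually, and then $\sum a_nb_n\ge c\sum a_n=\infty$ contradicts the hypothesis. Fix a level $\delta\in(0,L/2)$, so that $b_n>2\delta$ infinitely often, together with a constant $C$ with $|b_{n+1}-b_n|\le Ca_n$ for all large $n$. The mechanism is that whenever $b_m\ge\delta$, the slow-variation bound keeps $b_n\ge\delta/2$ over the forward block of indices whose cumulative $a$-mass stays below $\delta/(2C)$; because $\sum a_n=\infty$ each such block is finite, and on a block of $a$-mass $\gtrsim\delta$ the contribution $\sum a_nb_n$ is $\gtrsim\delta^2$. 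Since $b_m>2\delta$ infinitely often, I can start a fresh block past the previous one arbitrarily often, extracting infinitely many disjoint blocks and forcing $\sum a_nb_n=\infty$, the desired contradiction. Once $\limsup b_n=0$, nonnegativity gives $b_n\to0$.

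The main obstacle is the bookkeeping at the block boundaries: the single step that makes the cumulative $a$-mass first exceed $\delta/(2C)$, say at index $r$, may itself be large, so one cannot immediately lower bound the block's $a$-mass. I would resolve this by separating indices. If a large terminating step $a_r\ge\delta/(4C)$ occurs at an index with $b_r\ge\delta/2$, it already contributes $a_rb_r\ge\delta^2/(8C)$ directly, and infinitely many such terms alone diverge; otherwise every terminating step is small, so the block's $a$-mass exceeds $\delta/(2C)-\delta/(4C)=\delta/(4C)$ and the block contributes $\ge\delta^2/(8C)$. Either alternative yields infinitely many contributions bounded below by a fixed positive constant, contradicting $\sum a_nb_n<\infty$; ensuring the blocks are disjoint (always restarting past the previous block) then completes the argument.
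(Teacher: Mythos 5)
Your proof of part \textbf{(i)} is exactly the paper's argument: bound $\sum_{k=1}^{n}a_k b_k$ below by $\bigl(\min_{1\le k\le n}b_k\bigr)\sum_{k=1}^{n}a_k$ and above by the convergent full sum, then divide. For part \textbf{(ii)} the paper gives no proof at all --- it defers to the cited reference \cite[Lem.~A.5]{mairal2013stochastic} --- so here you are supplying what the paper omits, and your block/crossing argument is correct: each block started at an index $m$ with $b_m\ge\delta$ and terminated at the first index $r$ where the cumulative $a$-mass from $m$ exceeds $\delta/(2C)$ contributes a fixed amount $\gtrsim\delta^2/C$ to $\sum a_nb_n$, and infinitely many disjoint such blocks (available since $b_n>2\delta$ infinitely often and each block is finite by $\sum a_n=\infty$) contradict summability. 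Two small remarks. First, the "obstacle" you flag at the block boundary is not actually an obstacle: if you include the crossing index $r$ in the block, then $b_r\ge\delta/2$ holds automatically (the increment bound $|b_r-b_m|\le C\sum_{j=m}^{r-1}a_j\le\delta/2$ only uses steps up to $r-1$), so
\begin{align}
\sum_{n=m}^{r}a_nb_n \;\ge\; \frac{\delta}{2}\sum_{n=m}^{r}a_n \;>\; \frac{\delta}{2}\cdot\frac{\delta}{2C} \;=\; \frac{\delta^2}{4C},
\end{align}
and your two-case analysis (large versus small terminating step) becomes unnecessary, though it is not wrong. Second, your preliminary observation that $\liminf_n b_n=0$ is never used in the contradiction and can be dropped. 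For comparison, the classical proof behind the citation runs the same mechanism in a slightly different bookkeeping: it extracts infinitely many disjoint stretches on which $b$ crosses from above $\varepsilon$ down to below $\varepsilon/2$, and lower-bounds the $a$-mass of each crossing by $\varepsilon/(2C)$ via the telescoped increment bound; your version instead caps the $a$-mass per block a priori, which buys you a proof that never needs $b$ to actually descend. Both are equally elementary.
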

	
	\begin{proof}
		\textbf{(i)} follows from noting that
		\begin{align}
			\left( \sum_{k=1}^{n}a_{k} \right) \min_{1\le k \le n} b_{k}\le \sum_{k=1}^{n} a_{k}b_{k} \le  \sum_{k=1}^{\infty} a_{k}b_{k} <\infty.
		\end{align}
		The proof of \textbf{(ii)} is omitted and can be found in \cite{mairal2013stochastic}.
	\end{proof}
	
	The next lemma is commonly used for adaptive gradient algorithms. For example, Lem. A.1 in \cite{levy2017online} or~Lem. 12 in \cite{Duchi:EECS-2010-24}.
	\begin{lemma}[Lem. 12 in \cite{Duchi:EECS-2010-24}, Lem. A.1 in \cite{levy2017online}]\label{lem: num_seq}
		For nonnegative real numbers $a_i$ for $i\geq 1$, we have for any $v_0 > 0$
		\begin{equation*}
			\sum_{i=1}^n \frac{a_i}{v_0+\sum_{j=1}^i a_j} \leq \log\left( 1+\frac{\sum_{i=1}^n a_i}{v_0}\right) \text{~~~~~and~~~~~}
			\sum_{i=1}^n \frac{a_i}{\sqrt{\sum_{j=1}^ia_j}} \leq 2\sqrt{\sum_{i=1}^n a_i}.
		\end{equation*}
	\end{lemma}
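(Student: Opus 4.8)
The plan is to establish both bounds by a single common device: express each summand through the partial sums of the sequence $(a_i)$, dominate it by a telescoping increment using one elementary scalar inequality, and then sum so that the right-hand side collapses.

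For the first inequality I would introduce the partial sums $S_0 := v_0$ and $S_i := v_0 + \sum_{j=1}^i a_j$, so that $S_i - S_{i-1} = a_i$ and $0 < S_{i-1} \le S_i$. Then $\frac{a_i}{S_i} = 1 - \frac{S_{i-1}}{S_i}$, and the scalar inequality $1 - \frac{1}{x} \le \log x$ (valid for all $x>0$, since it is equivalent to $\log x \le x-1$ after substituting $1/x$) applied at $x = S_i/S_{i-1}$ gives $\frac{a_i}{S_i} \le \log\frac{S_i}{S_{i-1}}$. Summing over $i = 1, \dots, n$ telescopes the right side to $\log\frac{S_n}{S_0} = \log\left(1 + \frac{\sum_{i=1}^n a_i}{v_0}\right)$, which is the asserted bound.

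For the second inequality I would set $T_0 := 0$ and $T_i := \sum_{j=1}^i a_j$, again with $T_i - T_{i-1} = a_i$ and $T_{i-1} \le T_i$. Rationalizing the difference of square roots, $2(\sqrt{T_i} - \sqrt{T_{i-1}}) = \frac{2 a_i}{\sqrt{T_i}+\sqrt{T_{i-1}}} \ge \frac{2 a_i}{2\sqrt{T_i}} = \frac{a_i}{\sqrt{T_i}}$, where the inequality uses $\sqrt{T_{i-1}} \le \sqrt{T_i}$. Telescoping the sum then yields $\sum_{i=1}^n \frac{a_i}{\sqrt{T_i}} \le 2(\sqrt{T_n} - \sqrt{T_0}) = 2\sqrt{\sum_{i=1}^n a_i}$.

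There is no substantial obstacle here; the only point requiring a word of care is the degenerate situation in which a leading block of the $a_i$ vanishes, making an early denominator zero. Those terms carry a zero numerator and are assigned the value $0$, or equivalently one restricts the summation to indices with strictly positive partial sum; either convention leaves both sides unchanged. Since every step reduces to a pointwise scalar estimate followed by a telescoping cancellation, no further case analysis or auxiliary estimates are required.
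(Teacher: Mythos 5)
Your proof is correct. The paper does not actually prove this lemma---it is imported verbatim from the cited reference \cite[Lem.~D.2, Lem.~A.1]{levy2017online}---so there is no internal proof to compare against; your argument (bounding each summand by a telescoping increment, $\frac{a_i}{S_i} = 1 - \frac{S_{i-1}}{S_i} \le \log\frac{S_i}{S_{i-1}}$ for the first bound and $\frac{a_i}{\sqrt{T_i}} \le 2\bigl(\sqrt{T_i}-\sqrt{T_{i-1}}\bigr)$ via rationalization for the second) is exactly the standard proof used in those sources. Your remark on the degenerate case is also handled correctly: it only arises in the second inequality, since in the first one $v_0>0$ keeps every denominator strictly positive.
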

	
	The following uniform concentration lemma for vector-valued parameterized observables is due to \cite{lyu2022convergence}.

	\begin{lemma}[Lem 7.1 in \cite{lyu2022convergence}]\label{lem:f_n_concentration_L1_gen}
		Fix compact subsets $\mathcal{X}\subseteq \R^{q}$, $\Param\subseteq \R^{p}$ and a bounded Borel measurable function $\psi:\mathcal{X}\times \Param\rightarrow \R^{r}$. Let $(\x_{n})_{n\ge 1}$ denote a sequence of points in $\mathcal{X}$ such that $\x_{n}=\varphi(X_{n})$ for $n\ge 1$, where $(X_{n})_{n\ge 1}$ is a Markov chain on a state space $\Omega$ and $\varphi:\Omega \rightarrow \mathcal{X}$ is a measurable function. Assume the following: 
		\begin{description}
			\item[(a1)] The Markov chain $(X_{n})_{n\ge 1}$ mixes exponentially fast to its unique stationary distribution and the stochastic process $(\x_{n})_{n\ge 1}$ on $\mathcal{X}$ has a unique stationary distribution $\pi$. 
		\end{description}
		
		\noindent Suppose $w_{n}\in (0,1]$, $n\ge 1$ are non-increasing and satisfy $w_{n}^{-1} - w_{n-1}^{-1}\le 1$ for all $n\ge 1$. Define functions $\bar{\psi}(\cdot):= \E_{\x\sim \pi}\left[ \psi(\x,\cdot) \right] $ and $\bar{\psi}_{n}:\Param\rightarrow \R^{r}$ recursively as $\bar{\psi}_{0}\equiv \mathbf{0}$ and 
		\begin{align}
			\bar{\psi}_{n}(\cdot) = (1-w_{n})\bar{\psi}_{n-1}(\cdot) + w_{n} \psi(\x_{n}, \cdot).
		\end{align}
		Then fthere exists a constant $C>0$ such that for all $n\ge 1$,
		\begin{align}\label{eq:lem_f_fn_bd_gen}
			\sup_{\param\in \Param} \left\lVert \bar{\psi}(\param)- \E[\bar{\psi}_{n}(\param)] \right\rVert  \le Cw_{n}, \quad 	\E\left[ \sup_{\param\in \Param} \left\lVert \bar{\psi}(\param) - \bar{\psi}_{n}(\param) \right\rVert \right] \le Cw_{n}\sqrt{n}.
		\end{align}
		Furthermore, if $w_{n}\sqrt{n}=O(1/(\log n)^{1+\eps})$ for some $\eps>0$, then $\sup_{\param\in \Param} \left\lVert \bar{\psi}(\param) - \bar{\psi}_{n}(\param) \right\rVert\rightarrow 0$ as $t\rightarrow \infty$ almost surely. 
	\end{lemma}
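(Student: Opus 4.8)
The plan is to split the error $\bar\psi(\param)-\bar\psi_n(\param)$ into a deterministic bias term, governed by the geometric mixing of the chain, and a random fluctuation term, governed by a concentration inequality, after first unrolling the recursion. Writing $w_k^{(n)}:=w_k\prod_{j=k+1}^n(1-w_j)$ for the effective weight carried by the $k$-th sample, the recursion with $\bar\psi_0\equiv\mathbf 0$ gives
\begin{align}
\bar\psi_n(\param) = \sum_{k=1}^n w_k^{(n)}\,\psi(\x_k,\param), \qquad \sum_{k=1}^n w_k^{(n)} = 1 - r_n, \quad r_n := \prod_{j=1}^n(1-w_j).
\end{align}
The single hypothesis $w_n^{-1}-w_{n-1}^{-1}\le 1$ forces $w_n^{-1}\le w_1^{-1}+n$, hence $w_n=\Theta(1/n)$ and $\sum_j w_j=\infty$; from here a short induction yields the two bookkeeping facts I will use repeatedly, namely $r_n=O(w_n)$ and $\sum_{k=1}^n(w_k^{(n)})^2=O(w_n)$ (the latter because the $w_k^{(n)}$ are increasing in $k$ with $\max_k w_k^{(n)}=O(w_n)$ and sum to at most $1$; for $w_j\propto 1/j$ one checks $w_k^{(n)}\equiv 1/n$ exactly).

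For the deterministic (bias) estimate I would decompose
\begin{align}
\bar\psi(\param)-\E[\bar\psi_n(\param)] = r_n\,\bar\psi(\param) + \sum_{k=1}^n w_k^{(n)}\big(\bar\psi(\param)-\E[\psi(\x_k,\param)]\big),
\end{align}
and bound the summands using exponential mixing: $\lVert\bar\psi(\param)-\E[\psi(\x_k,\param)]\rVert\le 2\lVert\psi\rVert_\infty\Delta_k$ with $\Delta_k=O(\lambda^k)$ for some $\lambda<1$, a bound that is \emph{uniform} in $\param$. Splitting the sum at a window of length $\sim\log(1/w_n)$ and using that the recent weights are $O(w_n)$ while the old block is geometrically negligible shows the whole sum is $O(w_n)$ uniformly over $\param$; combined with $r_n=O(w_n)$ this gives the first bound $Cw_n$ in \eqref{eq:lem_f_fn_bd_gen}, for every $\param$ with the same constant, hence also for the supremum.

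The fluctuation estimate is where the real work lies. For fixed $\param$ the quantity $\bar\psi_n(\param)-\E[\bar\psi_n(\param)]$ is a weighted sum of centered, bounded observables of a geometrically ergodic chain, so a Markov-chain Bernstein/coupling (or Poisson-equation) concentration inequality gives a subgaussian tail with variance proxy $\sum_k(w_k^{(n)})^2\lVert\psi\rVert_\infty^2=O(w_n)$. To pass to $\sup_\param$ I would use compactness of $\Param$ together with (Lipschitz) continuity of $\psi(\x,\cdot)$ — which holds in the applications, e.g. $\psi=\nabla\ell$ — to take a $\delta$-net of cardinality $O(\delta^{-p})$, apply a union bound over the net, and absorb the discretization error through the modulus of continuity. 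Integrating the resulting tail yields $\E[\sup_\param\lVert\bar\psi_n-\E\bar\psi_n\rVert]=O(\sqrt{w_n})=O(w_n\sqrt n)$, where the last identity uses $w_n=\Theta(1/n)$; adding the deterministic $Cw_n$ from the previous paragraph gives the second bound in \eqref{eq:lem_f_fn_bd_gen}.

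Finally, for the almost-sure statement I would reuse the uniform tail bound with a fixed threshold $\delta>0$: under $w_n\sqrt n=O((\log n)^{-(1+\eps)})$ we have $w_n^{-1}=\Omega(\sqrt n(\log n)^{1+\eps})$, so the union bound gives $\P(\sup_\param\lVert\bar\psi-\bar\psi_n\rVert>\delta)\le O(\delta^{-p})\exp(-c\delta^2 w_n^{-1})$, which is summable in $n$ since $\exp(-c'\delta^2\sqrt n(\log n)^{1+\eps})$ dominates any fixed polynomial net. Borel–Cantelli then gives $\limsup_n\sup_\param\lVert\bar\psi-\bar\psi_n\rVert\le\delta$ almost surely, and letting $\delta\downarrow 0$ along a countable sequence concludes. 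I expect the main obstacle to be the correctly-scaled uniform concentration: the increments are dependent, so Azuma for independent summands is unavailable and a genuine Markov-chain concentration tool must be married to a net/chaining argument while keeping the variance proxy at the sharp $O(w_n)$ level; a secondary, more mechanical difficulty is extracting both $r_n=O(w_n)$ and $\sum_k(w_k^{(n)})^2=O(w_n)$ from the lone step-size condition $w_n^{-1}-w_{n-1}^{-1}\le 1$.
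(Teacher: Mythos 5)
First, a point of comparison: the paper never proves this lemma itself --- it is imported verbatim, with citation, from \cite[Lem.~7.1]{lyu2022convergence} --- so your attempt can only be judged on its own merits. Much of your bookkeeping is correct and is the natural route. Unrolling gives $\bar\psi_n(\param)=\sum_{k=1}^n w_k^{(n)}\psi(\x_k,\param)$; the monotonicity of $k\mapsto w_k^{(n)}$ is \emph{exactly equivalent} to the hypothesis $w_k^{-1}-w_{k-1}^{-1}\le 1$, whence $\max_k w_k^{(n)}=w_n$ and $\sum_k (w_k^{(n)})^2\le w_n$, and a one-line induction using the same hypothesis gives $r_n\le \frac{1-w_1}{w_1}\,w_n$. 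Your bias decomposition then proves the first inequality of \eqref{eq:lem_f_fn_bd_gen} in full generality --- indeed more simply than you indicate, since $\sum_k w_k^{(n)}\Delta_k\le w_n\sum_{k\ge 1}\Delta_k=O(w_n)$ needs no window splitting. One misstatement: the step-size hypothesis yields only $w_n=\Omega(1/n)$, not $\Theta(1/n)$ (e.g.\ $w_n\equiv 1$ and $w_n=n^{-1/2}$ are admissible); this happens to be harmless, because the only place you invoke it, $\sqrt{w_n}=O(w_n\sqrt{n})$, requires exactly the lower bound.

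The genuine gap is the uniform fluctuation bound. The lemma assumes only that $\psi$ is bounded and Borel measurable, but your passage from pointwise concentration to $\sup_{\param\in\Param}$ requires $\param\mapsto\psi(\x,\param)$ to be Lipschitz --- an assumption you explicitly import ``from the applications'' rather than from the statement. This cannot be repaired within the stated hypotheses: without some control on the complexity of the class $\{\psi(\cdot,\param):\param\in\Param\}$ the supremum need not even be measurable, and the second and third conclusions are in fact false for general bounded Borel $\psi$. Concretely, let $\x_k$ be i.i.d.\ uniform on $[0,1]$ (a legitimate exponentially mixing chain), let $g$ be a continuous surjection from the Cantor set $C\subseteq[0,1]$ onto the hyperspace of nonempty compact subsets of $[0,1]$ with the Hausdorff metric (which exists since that hyperspace is compact metric), and set $\psi(\x,\param)=\mathbf{1}\left[\param\in C,\ \x\in g(\param)\right]$, the indicator of a closed set, hence Borel. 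Take $w_n=1/n$, so $w_k^{(n)}=1/n$ and $r_n=0$. On every sample path there is $\param^{(n)}$ with $g(\param^{(n)})=\{\x_1,\dots,\x_n\}$, so $\bar\psi_n(\param^{(n)})=1$ while $\bar\psi(\param^{(n)})=\mathrm{Leb}\left(\{\x_1,\dots,\x_n\}\right)=0$; the supremal deviation is therefore $\ge 1$ for all $n$, contradicting both the bound $Cw_n\sqrt{n}=C/\sqrt{n}\to 0$ and the almost-sure convergence claim. So any correct proof must rest on a regularity hypothesis in $\param$ (the transcription here has evidently dropped one relative to the original source), and your write-up should state it as an explicit assumption rather than borrow it silently. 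Two smaller gaps: a single $\delta$-net plus union bound costs an extra $\sqrt{\log(1/\delta)}$ factor, which pushes the bound above $Cw_n\sqrt{n}$ when $w_n\asymp 1/n$, so you need the full Dudley/chaining integral (which your variance proxy $O(w_n)$ and Lipschitz increments do support); and the Bernstein-type inequality for non-uniformly weighted sums along a geometrically ergodic chain, with variance proxy $\sum_k(w_k^{(n)})^2$, is itself a substantial ingredient that must be proved (e.g.\ by blocking and coupling) or cited precisely --- as you yourself note, it is the crux, and asserting it is not yet a proof.
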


	\section{Proof for Sections~\ref{sec: grad_estim}}\label{sec: app_proofs}

    In this section, we prove the key lemma (Lemma \ref{lem:TV_linear_change_bound}) we stated in the main text. For the reader's convenience, we restate the key lemma here:


  	\begin{lemma}[Lemma \ref{lem:TV_linear_change_bound} in the main text]\label{lem:TV_linear_change_bound_appendix}
   Let Assumptions~\ref{assumption:A2}, ~\ref{assumption:A1},~\ref{assumption:A3} hold and $\param_{t}$ be generated according to Algorithm~\ref{algorithm:prox_grad},~\ref{algorithm:adagrad} or ~\ref{algorithm:shb}.
		Fix $t\ge 0$, $k=k_{t} \in [0,t]$ as in \cref{assumption:A1}, $\hat{\rho}>\rho$  and denote $\hat{\param}=\prox_{\varphi/\hat{\rho}}(\param)$. Then
		\begin{align}
			&\left| \E\left[  \langle \hat{\param}_{t} - \param_{t},\, G(\param_{t}, \x_{t+1}) \rangle \,|\, \mathcal{F}_{t-k}\right] -    \langle \hat{\param}_{t} - \param_{t},\, \E_{\x \sim \pi}\left[ G(\param_{t}, \x) \right] \rangle \right| \\
			&\qquad \le  \frac{{4} L^2}{\hat\rho-\rho} \,   \Delta_{[t-k,t]} 
			+ \frac{2L(L_1+\hat\rho )}{\hat\rho-\rho} \,  \E\left[ \sum_{s=t-k}^{t-1} \alpha_{s}\lVert G(\param_{s},\x_{s+1}) \rVert \,\bigg|\, \mathcal{F}_{t-k} \right].
		\end{align}
	\end{lemma}

    For the proofs in this section, we use the following notations. Let $\pi_{t+1|t-k}=\pi_{t+1|t-k}(\cdot\,|\, \mathcal{F}_{t-k})$ denote the distribution of $\x_{t+1}$ conditional on the information $\mathcal{F}_{t-k} =\sigma(\x_{1},\dots,\x_{t-k})$. Also, $\E_{\x\sim \mu}$ will denote the expectation only with respect to the random variable $\x$ distributed as $\mu$, leaving out any other random variable fixed.

    The following proposition is an important ingredient for the proof of Lemma \ref{lem:TV_linear_change_bound}. It allows us to compare a multi-step conditional expectation of the stochastic gradient to its stationary expectation. 

    \begin{proposition}\label{prop:key_lem_pf}
    Let Assumptions~\ref{assumption:A2},~\ref{assumption:A1},~\ref{assumption:A3} hold and $\param_{t}$ be generated according to Algorithm~\ref{algorithm:prox_grad},~\ref{algorithm:adagrad} or ~\ref{algorithm:shb}. Suppose $\lim_{N\rightarrow\infty} \lVert \pi_{t+N|t} - \pi \rVert_{TV} = 0$ for all $t\ge 0$. 
		Fix $t\ge 0$ and $k\in [0,t]$. Then 
        \begin{align}\label{eq:Lem_claim0}
			\lVert \E[ G(\param_{t-k}, \x_{t+1})\,|\, \mathcal{F}_{t-k} ] - \E_{\x\sim \pi}[G(\param_{t-k}, \x)]   \rVert \le \begin{cases} 2L \Delta_{[t-k, t+1]}  & \textup{if \cref{assumption:A3}(i) holds}; \\
           2L \Delta_{[t-k, t]}  & \textup{if \cref{assumption:A3}(ii) holds}. 
    \end{cases}
		\end{align}
    \end{proposition}

    \begin{proof}
    Recall that by Scheff\'{e}'s lemma, if two probability measures $\mu$ and $\nu$ on the same probability space have densities $\alpha$ and $\beta$ with respect to a reference measure $d m$, then $\lVert \mu-\nu\rVert_{TV} = \frac{1}{2}\int |\alpha-\beta| \, dm$ (see, e.g., Lemma 2.1 in  \cite{tsybakov2004introduction}). For each integer $m\ge 0$, let $\pi'_{t+m|t-k}$ denote the density functions of $\pi_{t+m|t-k}$ with respect to the Lebesgue measure, which we denote by $d\xi$. 

    We first prove the statement under Assumption~\ref{assumption:A3}(ii). In this case, $\lVert G(\param, \x) \rVert$ is assumed to be uniformly bounded by $L<\infty$ but we do not impose any additional assumption on the data samples $(\x_{t})_{t\ge 0}$ besides the asymptotic mixing condition $\lim_{N\rightarrow\infty} \lVert \pi_{t+N|t} - \pi \rVert_{TV} = 0$ for all $t\ge 0$. 
    
    Fix an integer $N\ge 1$. Noting that $\param_{t-k}$ is deterministic with respect to $\mathcal{F}_{t-k}$, we have 
			\begin{align}
				\| \E[ G(\param_{t-k}, \x_{t+1})\,|\, \mathcal{F}_{t-k} ] &- \E_{\x\sim \pi_{t+N|t-k}}[G(\param_{t-k}, \x)] \|  \\
				&=\| \E[ G(\param_{t-k}, \x_{t+1})\,|\, \mathcal{F}_{t-k} ] - \E[G(\param_{t-k}, \x_{t+N})\,|\, \mathcal{F}_{t-k}] \| \\
				&= \| \E_{\x\sim \pi_{t|t-k} }[G(\param_{t-k}, \x)]  -  \E_{\x\sim \pi_{t+N-1|t-k}}[G(\param_{t-k}, \x)] \|\\
				& \le \int_{\Omega} \lVert G(\param_{t-k}, \x) \rVert  | \pi_{t+1|t-k}'(\x) - \pi_{t+N|t-k}'(\x) | \, d\xi\\
				& \le 2 L \lVert \pi_{t+1|t-k}  - \pi_{t+N|t-k} \rVert_{TV},
			\end{align}
			where we have used Scheff\'e's lemma for the last equality. 
			By a similar argument, 
			\begin{align}
				\| \E_{\x\sim \pi}[ G(\param_{t-k}, \x)  ] - \E_{\x\sim \pi_{t+N|t-k}}[G(\param_{t-k}, \x)] \|  
				&\le  \int_{\Omega} \lVert G(\param_{t-k}, \x) \rVert  | \pi'(\x) - \pi_{t+N|t-k}'(\x) | \, d\xi\\
				& \le 2 L \lVert \pi  - \pi_{t+N|t-k} \rVert_{TV}.
			\end{align}
			By triangle inequality, it then follows 
			\begin{align}
				\| \E_{\x\sim \pi}[ G(\param_{t-k}, \x)  ] - \E[ G(\param_{t-k}, \x_{t+1})\,|\, \mathcal{F}_{t-k} ] \| \le 2 L\left(\lVert \pi  - \pi_{t+N|t-k} \rVert_{TV} + \lVert \pi_{t+1|t-k}  - \pi_{t+N|t-k} \rVert_{TV} \right).
			\end{align}
			Now by the hypothesis that  $\lim_{N\rightarrow\infty} \lVert \pi_{t+N|t-k} - \pi \rVert_{TV}=0$, the last expression above converges to $2 L \lVert \pi_{t+1|t-k} - \pi \rVert_{TV}$ as $N\rightarrow \infty$. Since the left hand side above does not depend on $N$, this shows the claim \eqref{eq:Lem_claim0}. 

        Next, we prove the statement under Assumption~\ref{assumption:A3}(i). In this case, we only assume the one-step conditional expectation of the norm of the stochastic gradient is bounded: 
        \begin{align}\label{eq:prop_pf_assumption_one_step_bdd}
            \E[ \lVert G(\param, \x_{t+1} \rVert \,|\, \mathcal{F}_{t}  ] \le L \quad \textup{for all $t\ge 0$}, 
        \end{align}
        which is much weaker than the uniform boundedness  of $\lVert G \rVert$ in Assumption \ref{assumption:A3}(ii). In order to handle a technical difficulty in this general setting, we will need to assume that the data samples $(\x_{t})_{t\ge 0}$ is a function of some time-homogeneous Markov chain. That is, there exists a time-homogeneous Markov chain $(X_{t})_{t\ge 0}$ on some state space $\mathfrak{X}$ and a function $w:\mathfrak{X}\rightarrow \Omega$  such that $\x_{t}=w(X_{t})$ for all $t\ge 0$. By the time-homogeneity of the chain $(X_{t})_{t\ge 0}$, there exists a Markov transition kernel $P$ such that 
        \begin{align}
            \P(X_{t+1}=b\,|\, X_{t}=a) \equiv P(a,b) \qquad \textup{for all $t\ge 0$ and $a,b\in \mathfrak{X}$}. 
        \end{align}

    We will proceed similarly as before. The key technical detail to avoid using uniform boundedness of $G$ is to rewrite expectations of $G$ by the expectation of a one-step conditional expectation of $G$. Then a similar argument as before will work only with the assumption that the one-step conditional expectation of $G$ is bounded. We give the details of this sketched approach below. 

    Fix an integer $N\ge 1$. Since the conditional expectation $\E[ G(\param_{t-k}, \x_{t+m})\,|\, \mathcal{F}_{t-k} ]$ is deterministic with respect to $\mathcal{F}_{t-k}$, we can write 
    \begin{align}\label{eq:prop_key_pf_1}
        \E[ G(\param_{t-k}, \x_{t+1})\,|\, \mathcal{F}_{t-k} ] &=  \E[\E[ G(\param_{t-k}, \x_{t+1})\,|\, \mathcal{F}_{t} ] \,|\, \mathcal{F}_{t-k}].
    \end{align}
    Similarly, write 
    \begin{align}\label{eq:prop_key_pf_2}
    \E[ G(\param_{t-k}, \x_{t+N})\,|\, \mathcal{F}_{t-k} ] &=  \E[\E[ G(\param_{t-k}, \x_{t+N})\,|\, \mathcal{F}_{t+N-1} ] \,|\, \mathcal{F}_{t-k}].
    \end{align}
    Now for given $s\ge 0$, $\param\in \Param$, and $\x\in \Omega$, define 
    \begin{align}
        \tilde{G}_{s}(\param, \x):=\E[G(\param, \x_{s+1})\,|\, \x_{s}=\x].
    \end{align}
    The only randomness being integrated out in the expectation in the above definition is the random data sample $\x_{s+1}$ conditional on the data sample $\x_{s}$ a step before being $\x$. The measure used in the integral is the one-step conditional distribution $\pi_{s+1|s}$. By the time-homogeneity assumption, the distribution $\pi_{s+1|s}$ does not depend on $s$. It follows that the function $\tilde{G}_{s}$ above does not depend on $s$. Therefore, we will omit the subscript $s$ in $\tilde{G}_{s}$. Note that by Jensen's inequality and \eqref{eq:prop_pf_assumption_one_step_bdd}, 
   \begin{align}
       \lVert \tilde{G}(\param, \x) \rVert \le \E[\lVert G(\param, \x_{s+1}) \rVert \,|\, \x_{s}=\x] \le L. 
   \end{align}

   Using \eqref{eq:prop_key_pf_1} and \eqref{eq:prop_key_pf_2}, we have 
			\begin{align}
				\| \E[ G(\param_{t-k}, \x_{t+1})\,|\, \mathcal{F}_{t-k} ] &- \E_{\x\sim \pi_{t+N|t-k}}[G(\param_{t-k}, \x)] \|  \\
				&=\| \E[ G(\param_{t-k}, \x_{t+1})\,|\, \mathcal{F}_{t-k} ] - \E[G(\param_{t-k}, \x_{t+N})\,|\, \mathcal{F}_{t-k}] \| \\
                 &= \| \E_{\x\sim \pi_{t|t-k} }[\tilde{G}_{t}(\param_{t-k}, \x)]  -  \E_{\x\sim \pi_{t+N-1|t-k}}[\tilde{G}_{t+N-1}(\param_{t-k}, \x)] \|\\
				&= \| \E_{\x\sim \pi_{t|t-k} }[\tilde{G}(\param_{t-k}, \x)]  -  \E_{\x\sim \pi_{t+N-1|t-k}}[\tilde{G}(\param_{t-k}, \x)] \|\\
				& = \left\| \int_{\Omega} \tilde{G}(\param_{t-k}, \x) (\pi_{t|t-k}'(\x) - \pi_{t+N-1|t-k}'(\x)) \, d\xi \right\|\label{eq: nm32} \\
				& \le \int_{\Omega} \lVert \tilde{G}(\param_{t-k}, \x) \rVert  | \pi_{t|t-k}'(\x) - \pi_{t+N-1|t-k}'(\x) | \, d\xi\\
				& \le 2 L \lVert \pi_{t|t-k}  - \pi_{t+N-1|t-k} \rVert_{TV},
			\end{align}
			where we have used Scheff\'e's lemma and the fact that $\tilde{G}_{s}$ does not depend on $s$. 
			By a similar argument and noting that $\E_{\x \sim \pi}[G(\param_{t-k}, \x)] = \E_{\x_{t-k} \sim \pi}[G(\param_{t-k}, \x_{t-k+1})] =  \E_{\x_{t-k} \sim \pi}[ \E\left[  G(\param_{t-k}, \x_{t-k+1})]\,|\, \x_{t-k} \right]$, 
			\begin{align}
				\| \E_{\x\sim \pi}[ G(\param_{t-k}, \x)  ] &- \E_{\x\sim \pi_{t+N|t-k}}[G(\param_{t-k}, \x)] \|  \\
				&= \| \E_{\x\sim \pi }[\tilde{G}(\param_{t-k}, \x)]  -  \E_{\x\sim \pi_{t+N-1|t-k}}[\tilde{G}(\param_{t-k}, \x)] \|\\
				& \le 2 L \lVert \pi  - \pi_{t+N-1|t-k} \rVert_{TV}.
			\end{align}
			By triangle inequality, it then follows 
			\begin{align}
				\| \E_{\x\sim \pi}[ G(\param_{t-k}, \x)  ] - \E[ G(\param_{t-k}, \x_{t+1})\,|\, \mathcal{F}_{t-k} ] \| \le 2 L\left(\lVert \pi  - \pi_{t+N-1|t-k} \rVert_{TV} + \lVert \pi_{t|t-k}  - \pi_{t+N-1|t-k} \rVert_{TV} \right).
			\end{align}
            Now by the hypothesis   $\lim_{N\rightarrow\infty} \lVert \pi_{t+N-1|t-k} - \pi \rVert_{TV}=0$, so the last expression above converges to $2 L \lVert \pi_{t|t-k} - \pi \rVert_{TV}$ as $N\rightarrow \infty$.  Since the left hand side above does not depend on $N$, this shows \eqref{eq:Lem_claim0}. 
    \end{proof}

    We now prove Lemma \ref{lem:TV_linear_change_bound}. 
 
	\begin{proof}[Proof of Lemma~\ref{lem:TV_linear_change_bound}]

		 Denote $V(\x,\param):= \langle \hat{\param} - \param,\, G(\param, \x)  \rangle$. Note that $  \E_{\x \sim \pi} \left[ V(\x, \param_{t}) \right] = \langle \hat{\param}_{t} - \param_{t},\, \E_{\x \sim \pi}\left[ G(\param_{t}, \x) \right] \rangle  $. Observe that by triangle inequality
		\begin{align}
			&\left|	\E\left[ V(\x_{t+1}, \param_{t}) \,|\, \mathcal{F}_{t-k} \right] -  \E_{\x \sim \pi} \left[ V(\x, \param_{t}) \right] \right| \\
			&\hspace{3cm}  \le  \left|	\E\left[ V(\x_{t+1}, \param_{t}) -  V(\x_{t+1}, \param_{t-k}) \,|\, \mathcal{F}_{t-k} \right] \right| \\
			&\hspace{4cm} + \left|\E_{\x\sim \pi} \left[ V(\x, \param_{t-k}) -  V(\x, \param_{t}) \right] \right| \\
			&\hspace{4.7cm} +  \left|	\E \left[ V(\x_{t+1}, \param_{t-k}) \,|\, \mathcal{F}_{t-k} \right] -  \E_{\x \sim \pi} \left[ V(\x, \param_{t-k}) \right] \right| .\label{eq:lem_mixing_pf1}
		\end{align}
		We will bound the three terms in the right in order.

			In order to bound the first term in the right hand side above, we first write 
			\begin{multline*}
				V(\x_{t+1}, \param_t) - V(\x_{t+1}, \param_{t-k}) = \langle \hat{\param}_{t} - \param_t, G(\param_t, \x_{t+1})\rangle - \langle \hat{\param}_{t-k} - \param_{t-k}, G(\param_{t-k}, \x_{t+1}) \rangle \\
				= \langle \hat{\param}_t - \param_t, G(\param_t, \x_{t+1}) - G(\param_{t-k}, \x_{t+1}) \rangle + \langle \hat{\param}_{t} - \hat{\param}_{t-k}, G(\param_{t-k}, \x_{t+1}) \rangle \\
				+ \langle \param_{t-k} - \param_t, G(\param_{t-k}, \x_{t+1}) \rangle.
			\end{multline*}
			By applying iterated expectation twice, we get 
			\begin{align}
				&\E\left[  \langle \param_{t-k} - \param_t,\, G(\param_{t-k}, \x_{t+1}) \rangle \,|\, \mathcal{F}_{t-k}\right] \notag \\
				&\qquad = \E_{\param_{t}}\left[ \E\left[   \langle \param_{t-k} - \param_t,\,  G(\param_{t-k}, \x_{t+1})   \rangle \,|\, \param_{t},\, \mathcal{F}_{t-k}\right] \,|\, \mathcal{F}_{t-k}\right]\notag\\
				&\qquad = \E_{\param_{t}}\left[    \left\langle \param_{t-k} - \param_t,\,  \E\left[ G(\param_{t-k}, \x_{t+1})   \,|\, \param_{t},\, \mathcal{F}_{t-k}\right] \right\rangle  \,|\, \mathcal{F}_{t-k}\right]\notag\\
				&\qquad = \E\left[    \left\langle \param_{t-k} - \param_t,\,  \E\left[ G(\param_{t-k}, \x_{t+1})   \,|\, \param_{t},\, \mathcal{F}_{t-k}\right] \right\rangle  \,|\, \mathcal{F}_{t-k}\right].\label{eq: sfm4}
			\end{align}
			We can rewrite the conditional expectation $\E[  \langle \hat{\param}_{t-k} - \hat{\param}_t,\, G(\param_{t-k}, \x_{t+1}) \rangle \,|\, \mathcal{F}_{t-k}]$ similarly as above. 

            Next, we will observe that 
            \begin{align}\label{eq:pf_key_lem_theta_bd}
				\lVert  \hat{\param}_t - \param_t  \rVert  \le \frac{2 }{\hat{\rho}-\rho} \lVert \E_{\x\sim \pi}[G(\param_{t}, \x)] \rVert  \le  \frac{2L }{\hat{\rho}-\rho}.
			\end{align}
            The first inequality above is due to \cref{lem: bdd_xt_xthat}. 
            Under \cref{assumption:A3}(ii), where since $\lVert G \rVert\le L$, the second inequality follows by using Jensen's inequality. In case of \cref{assumption:A3}(i), we need a bit more careful argument. For each $N\ge 1$, 
			\begin{align}
				\lVert  \hat{\param}_t - \param_t  \rVert  
				&\le \frac{2 }{\hat{\rho}-\rho} \lVert \E_{\x\sim \pi}[G(\param_{t}, \x)] \rVert \\
                & \le \frac{2 }{\hat{\rho}-\rho} \left( \lVert  \E_{\x\sim \pi_{t+N|t}}[ G(\param_{t}, \x)  ] \rVert +  \lVert  \E_{\x\sim \pi}[G(\param_{t}, \x)  -   \E_{\x\sim \pi_{t+N|t}}[ G(\param_{t}, \x)  ] \rVert  \right) \\
                & \le \frac{2 }{\hat{\rho}-\rho} \left(   \E[\lVert G(\param_{t}, \x_{t+N}) \rVert \,|\, \mathcal{F}_{t} ] +  \lVert  \E_{\x\sim \pi}[G(\param_{t}, \x)  -   \E_{\x\sim \pi_{t+N|t}}[ G(\param_{t}, \x)  ] \rVert  \right).
			\end{align}
            Note that $ \E[\lVert G(\param_{t}, \x_{t+N}) \rVert \,|\, \mathcal{F}_{t} ]\le L$ by \cref{assumption:A3}(ii) and iterated expectation. Furthermore, the second term in the last expression above vanishes as $N\rightarrow\infty$ by  Proposition \ref{prop:key_lem_pf} and \cref{assumption:A1}. Therefore we can conclude \eqref{eq:pf_key_lem_theta_bd} under \cref{assumption:A2}(ii) as well. 

            Now  by using Cauchy-Schwarz inequality, $L_{1}$-Lipschitz continuity of $\param\mapsto G(\x,\param)$ (see \cref{assumption:A2}),  Lemma~\ref{lem: prox_wc_lips} and~\cref{assumption:A3}, we obtain
			\begin{align}
				| \E[ V(\x_{t+1}, \param_{t}) - V(\x_{t+1}, \param_{t-k}) \,|\, \mathcal{F}_{t-k} ] | &\le  \frac{2L L_1+2\hat\rho L}{\hat\rho-\rho} \,  \E\left[ \lVert \param_{t-k} - \param_{t} \rVert \,|\, \mathcal{F}_{t-k} \right] \\
				&\hspace{-1.5cm}\le \frac{2L L_1+2\hat\rho L}{\hat\rho-\rho} \,  \E\left[ \sum_{s=t-k}^{t-1} \alpha_{s} \lVert G(\param_{s},\x_{s+1}) \rVert \,\bigg|\, \mathcal{F}_{t-k}, \right],\label{eq: eti3}
			\end{align}
			where for the last inequality we have used  $\lVert \param_{s}-\param_{s-1} \rVert \le \lVert \alpha_{s-1} G(\param_{s-1}, \x_{s}) \rVert$ for $s\ge 1$ along with triangle inequality.

		A similar argument shows 
		\begin{equation}
			|	\E_{\x\sim \pi}\left[ V(\x, \param_{t}) - V(\x, \param_{t-k})  \right] |
			\le  \frac{2L L_1+2\hat\rho L}{\hat\rho-\rho} \,  \E\left[ \sum_{s=t-k}^{t-1} \alpha_{s}\lVert G(\param_{s-1},\x_{s}) \rVert \,|\, \mathcal{F}_{t-k} \right].\label{eq: eti4}
		\end{equation}
		We continue by estimating the last term on the RHS of~\eqref{eq:lem_mixing_pf1}. 
  
        Proceeding by Cauchy-Schwarz inequality, and using that $\param_{t-k}, \hat{\param}_{t-k}$ are deterministic with respect to $\mathcal{F}_{t-k}$, we get 
		\begin{align}\label{eq:inner_prod_approx}
			&\left|\E\left[ V(\x_{t+1}, \param_{t-k}) \,|\, \mathcal{F}_{t-k} \right]   - \E_{\x \sim \pi}\left[ V(\x, \param_{t-k})  \right]  \right| \\
			&\qquad = \left|\E\left[ \langle  \hat{\param}_{t-k} - \param_{t-k},\, G(\param_{t-k},\x_{t+1})  \rangle \,|\, \mathcal{F}_{t-k} \right]   - \E_{\x \sim \pi}\left[ V(\x, \param_{t-k})  \right]  \right| \\
			&\qquad = \left| \langle  \hat{\param}_{t-k} - \param_{t-k},\, \E\left[ G(\param_{t-k},\x_{t+1})\,|\, \mathcal{F}_{t-k} \right]   \rangle   - \langle  \hat{\param}_{t-k} - \param_{t-k},\, \E_{\x\sim \pi}\left[ G(\param_{t-k},\x) \right]   \rangle   \right| \\
			&\qquad \le 2 L \lVert \hat{\param}_{t-k} - \param_{t-k} \rVert \,  \Delta_{[t-k,t]}\} \\
            &\qquad \leq \frac{4 L^2}{\hat\rho-\rho} \Delta_{[t-k,t]} ,\label{eq: eti5}
		\end{align}
		where the last step follows from \cref{prop:key_lem_pf}. 
		Combining~\eqref{eq: eti3},~\eqref{eq: eti4},~\eqref{eq: eti5} with \eqref{eq:lem_mixing_pf1} then shows the assertion. 
	\end{proof}
	
\section{Proof for Section~\ref{sec: proj_sgd}}\label{app: psgd}
		
	\begin{theorem}[Theorem~\ref{th: th_proj_sgd} in the main text]\label{th: th_proj_sgd_appendix}
Let Assumptions \ref{assumption:A2}-\ref{assumption:A3} hold and $(\param_{t})_{t\ge 1}$ be a sequence generated by Algorithm \ref{algorithm:prox_grad}. 
		Fix $\hat{\rho}>\rho$. Then the following hold:
		\begin{description}
			\item[(i)] (Rate of convergence)  For each $T\ge 1$, 
			{\small
				\begin{align}
					&E\left[     \lVert \nabla \varphi_{1/\hat{\rho}}(\param_{T}^{\mathrm{out}}) \rVert^{2} \right]  \\
					& \quad  \le \frac{\hat{\rho}^{2}L^2}{\hat{\rho}-\rho} \frac{1}{\sum_{k=1}^{T} \alpha_{k}}\bigg[\frac{ \varphi_{1/\hat{\rho}}(\param_{1}) - \inf \varphi_{1/\hat{\rho}}}{\hat{\rho}L^2}   + \frac{1}{2} \sum_{t=1}^{T} \alpha_{t}^{2} + \frac{2(L_1 + \hat\rho)}{\hat\rho-\rho} \sum_{t=1}^{T} k_{t}\alpha_{t}\alpha_{t-k_{t}} + \frac{4}{\hat\rho-\rho} \sum_{t=1}^{T} \alpha_{t} \E[\Delta_{[t-k_{t}, t]} ]\bigg].
				\end{align}
			}
In particular, with $\alpha_t = \frac{c}{\sqrt{t}}$ for some $c> 0$ and under exponential mixing, we have that $ \E\left[     \lVert \nabla \varphi_{1/\hat{\rho}}(\param_{T}^{\mathrm{out}}) \rVert \right] \leq \varepsilon $ with $\tilde{O}\left( \varepsilon^{-4} \right)$ samples.

			\item[(ii)] (Global convergence) Further assume that $\sum_{t=0}^{\infty} k_{t}\alpha_{t} \alpha_{t-k_{t}}<\infty$. Then $\lVert \nabla \varphi_{1/\hat{\rho}}(\hat{\param}_{t}) \rVert \rightarrow 0$ as $t\rightarrow \infty$ almost surely. Furthermore, $\param_{t}$ converges to the set of all stationary points of $f$ over $\Param$.
		\end{description}
	\end{theorem}

	\begin{proof}
		Recall the definition of $\varphi_{1/\hat{\rho}}$ from~\eqref{eq: def_moreu_env}.
		We start as in~\citep{davis2019stochastic} with the difference of conditoning on $\mathcal{F}_{t-k}$ instead of the latest iterate, and use Lemma~\ref{lem:TV_linear_change_bound} to handle the additional bias due to dependent sampling.
		Denote $\hat{\param}_t=\prox_{\varphi/\hat{\rho}}(\param_t)$ for $t\ge 1$ and fix $k\in \{0,\dots,t\}$. Observe that 
		\begin{align}
			\E\Big[ &\varphi_{1/\hat{\rho}} (\param_{t+1}) \,\bigg|\, \mathcal{F}_{t-k} \Big] \le \E\left[ f(\hat{\param}_{t}) + \frac{\hat{\rho}}{2} \lVert \param_{t+1} - \hat{\param}_{t}\rVert^{2} \,\bigg|\, \mathcal{F}_{t-k} \right]  \\
			&= \E\left[ f(\hat{\param}_{t}) \,\bigg|\, \mathcal{F}_{t-k} \right] + \frac{\hat{\rho}}{2} \E\left[  \left\lVert \textup{proj}_{\Param}(\param_{t} - \alpha_{t} G(\param_{t}, \x_{t+1})) -\textup{proj}_{\Param}(\hat{\param}_{t}) \right\rVert^{2} \,\bigg|\, \mathcal{F}_{t-k} \right] \label{eq: wej4}  \\
			&\le \E\left[ f(\hat{\param}_{t}) \,\bigg|\, \mathcal{F}_{t-k} \right] + \frac{\hat{\rho}}{2} \E\left[  \left\lVert (\param_{t}-\hat{\param}_{t}) - \alpha_{t} G(\param_{t}, \x_{t+1}) \right\rVert^{2} \,\bigg|\, \mathcal{F}_{t-k} \right]   \\
			&\le \E\left[ f(\hat{\param}_{t})  + \frac{\hat{\rho}}{2}  \lVert \param_{t} - \hat{\param}_{t}\rVert^{2} \,\bigg|\, \mathcal{F}_{t-k} \right] + \hat{\rho} \alpha_{t} \E\left[ \langle \hat{\param}_{t} - \param_{t},\, G(\param_{t}, \x_{t+1})\rangle \,\bigg| \, \mathcal{F}_{t-k} \right]  + \frac{\alpha_{t}^{2} \hat{\rho} L^{2} }{2}\label{eq: htr3} \\
			&\le \E\left[ \varphi_{1/\hat{\rho}}(\param_{t}) \,\bigg|\, \mathcal{F}_{t-k}\right] + \hat{\rho} \alpha_{t}  \langle \hat{\param}_{t} - \param_{t},\, \E_{\x\sim \pi}\left[ G(\param_{t}, \x) \right] \rangle  + \frac{\alpha_{t}^{2} \hat{\rho} L^{2} }{2}\\
			&\qquad + \hat\rho\alpha_t \left( \frac{4L^2}{\hat\rho-\rho} \, \E\left[   \Delta_{[t-k,t]}  \right] + \frac{2L( L_1+\hat\rho )}{\hat\rho-\rho} \alpha_{t-k} {\sum_{s=t-k}^{t-1} \E\left[ \lVert G(\param_{s},\x_{s+1}) \rVert \,|\, \mathcal{F}_{t-k} \right] }\right).\label{eq: rot4}
		\end{align}
		Namely, the first and the last  inequalities use the definition of Moreau envelope $\varphi_{1/\hat\rho}$ and $\hat{\param}_{t}\in \Param$, the second inequality uses $1$-Lipschitzness of the projection operator, and the last inequality uses Lemma \ref{lem:TV_linear_change_bound} and that $\alpha_{s}$ is non-increasing in $s$. {Note that using iterated expectation, \cref{assumption:A3}, and the fact that $\param_{s}$ is deterministic with respect to $\mathcal{F}_{s}$, for each $t-k\le s \le t-1$, we get 
			\begin{align}
				\E\left[ \lVert G(\param_{s},\x_{s+1}) \rVert \,|\, \mathcal{F}_{t-k} \right] &= \E\left[ \E\left[  \lVert G(\param_{s},\x_{s+1}) \rVert \,|\, \mathcal{F}_{s} \right] \,|\, \mathcal{F}_{t-k}  \right] \le L.
			\end{align}
			Hence the summation in the last term above is bounded above by $kL$.} Then by using \cref{assumption:A2} and the weak convexity of $g$, we have
		\begin{align}\label{eq: fke3}
			\langle \hat{\param}_{t} - \param_{t},\, \E_{\x\sim \pi}\left[ G(\param_{t}, \x) \right] \rangle \leq  f(\hat{\param}_{t}) - f(\param_{t}) + \frac{\rho}{2} \lVert \param_{t} - \hat{\param}_{t} \rVert^{2} .
		\end{align}
		By using this estimate in~\eqref{eq: rot4} and then integrating out $\mathcal{F}_{t-k}$, we get 
		\begin{align}
			\E\left[ \varphi_{1/\hat{\rho}} (\param_{t+1}) \right] -  \E\left[ \varphi_{1/\hat{\rho}}(\param_{t})\right] &\le \hat{\rho} \alpha_{t} \E\left[   f(\hat{\param}_{t}) - f(\param_{t}) + \frac{\rho}{2} \lVert \param_{t} - \hat{\param}_{t} \rVert^{2}   \right] + \frac{\alpha_{t}^{2} \hat{\rho} L^{2} }{2} \\
			&\qquad + \hat\rho\alpha_t \left( \frac{4 L^2}{\hat\rho-\rho} \, \E[   \Delta_{[t-k,t]} ]  + k\frac{2L^2 (L_1+\hat\rho)}{\hat\rho-\rho} \alpha_{t-k}\right).
		\end{align}
		Now we chose $k=k_{t}\rightarrow \infty$ as $t\rightarrow \infty$. Summing over $t=1,\dots,T$ results in
		\begin{align}
			\hat{\rho} \sum_{t=1}^{T}  \alpha_{t} \E\Big[   f(\param_{t}) &- f(\hat{\param}_{t}) -  \frac{\rho}{2}  \lVert \param_{t} - \hat{\param}_{t} \rVert^{2}  \Big] \le \left( \varphi_{1/\hat{\rho}}(\param_{1}) - \inf \varphi_{1/\hat{\rho}}  \right)+ \frac{\hat{\rho}L^{2}}{2} \sum_{t=1}^{T} \alpha_{t}^{2} \\
			&+ \frac{4\hat\rho L^2}{\hat\rho-\rho} \sum_{t=1}^{T} \alpha_{t} \E[\Delta_{[t-k_{t}, t]}]   + \frac{2L^2\hat\rho( L_1 + \hat\rho)}{\hat\rho-\rho} \sum_{t=1}^{T} k_{t}\alpha_{t}\alpha_{t-k_{t}}. \label{eq:thm1_bound1}
		\end{align}
		Next, we use the fact that the function $\param\mapsto f(\param) + \frac{\hat{\rho}}{2}\lVert \param-\param_{t} \rVert^{2}$ is strongly convex with parameter $(\hat{\rho}-\rho)/2$ that is minimized at $\hat{\param}_{t}$ to get 
		\begin{align}
			f(\param_{t}) - f(\hat{\param}_{t}) - \frac{\rho}{2}\lVert \param_{t} - \hat{\param}_{t} \rVert^{2} & = \left(f(\param_{t}) + \frac{\hat\rho}{2}\lVert \hat{\param}_{t} - \param_{t} \rVert^{2} \right) - \left(f(\hat{\param}_{t}) + \frac{\hat{\rho}}{2}\lVert \hat{\param}_{t} - \param_{t} \rVert^{2} \right)  + \frac{\hat{\rho}-\rho}{2} \lVert \param_{t} - \hat{\param}_{t} \rVert^{2} \\
			&  \ge (\hat{\rho} - \rho) \lVert \param_{t} - \hat{\param}_{t} \rVert^{2} \\
			& = \frac{\hat{\rho} - \rho}{\hat{\rho}^{2}} \lVert \nabla \varphi_{1/\hat{\rho}}(\param_{t}) \rVert^{2}.\label{eq: siw3}
		\end{align}
		where the second to the last equality uses \eqref{eq:moreau_grad}. 
		Combining with \eqref{eq:thm1_bound1}, this implies  \begin{align}
			\frac{\hat{\rho} - \rho}{\hat{\rho}}  \sum_{t=1}^{T}  \alpha_{t} \E\left[   \lVert \nabla \varphi_{1/\hat{\rho}}(\param_{t}) \rVert^{2}  \right] &\le \left( \varphi_{1/\hat{\rho}}(\param_{1}) - \inf \varphi_{1/\hat{\rho}}  \right)+ \frac{\hat{\rho}L^{2}}{2} \sum_{t=1}^{T} \alpha_{t}^{2} \\
			& + \frac{4 \hat\rho L^2}{\hat\rho-\rho} \sum_{t=1}^{T} \alpha_{t} \E[   \Delta_{[t-k_{t},t]} ]
			+ \frac{2L^2\hat\rho( L_1 + \hat\rho)}{\hat\rho-\rho} \sum_{t=1}^{T} k_{t}\alpha_{t}\alpha_{t-k_{t}}.\label{eq:thm1_bound2}
		\end{align}
		This shows the assertion when $\param_{T}^{\textup{out}}=\param_{\tau}$. If $\param_{T}^{\textup{out}}\in\arg\min_{\param\in\{\param_1, \dots, \param_T \}} \| \nabla \varphi_{1/\hat{\rho}}(\param) \|^2$, the assertion follows from \eqref{eq:thm1_bound2} and Lemma \ref{lem:positive_convergence_lemma} in  Appendix \ref{sec: appendix}.
		
		For the second part of \textbf(i), we plug in the value of $\alpha_t$ and $k_t = O(\log t)$, $  \Delta_{[t-k_{t},t]}  = O(\lambda^{k_t})$ for $\lambda \in (0, 1)$ under the exponential mixing assumption.
		
		Next, we show \textbf{(ii)}. We will first show that $\lVert \nabla \varphi_{1/\hat{\rho}}(\param_{t}) \rVert\rightarrow 0$ almost surely as $t\rightarrow \infty$. Under the hypothesis, by \eqref{eq:thm1_bound2}, we have 
		\begin{align}
			\sum_{t=1}^{\infty} \alpha_{t} \E\left[\lVert \nabla \varphi_{1/\hat{\rho}}(\param_{t}) \rVert^{2} \right] <\infty.
		\end{align}
		By Fubini's theorem, this implies 
		\begin{align}
			\sum_{t=1}^{\infty} \alpha_{t} \lVert \nabla \varphi_{1/\hat{\rho}}(\param_{t}) \rVert^{2}  <\infty \quad \textup{almost surely.}
		\end{align}
		We will then use Lemma \ref{lem:positive_convergence_lemma} \textbf{(ii)} to deduce that $\lVert \nabla \varphi_{1/\hat{\rho}}(\param_{t}) \rVert\rightarrow 0$ almost surely as $t\rightarrow \infty$. To this end, it suffices to verify  
		\begin{align}\label{eq:pf_moreau_lipschitz}
			\left|  \lVert  \nabla \varphi_{1/\hat{\rho}}(\param_{t+1}) \lVert^{2} - \lVert \nabla \varphi_{1/\hat{\rho}}(\param_{t}) \rVert^{2}   \right| = O(\alpha_{t}).
		\end{align}
		Indeed, by using \eqref{eq:moreau_grad} and Lemma \ref{lem: prox_wc_lips} in Appendix \ref{sec: appendix}, 
		\begin{align}
			\frac{1}{\hat{\rho}} \lVert  \nabla \varphi_{1/\hat{\rho}}(\param_{t+1})-\nabla \varphi_{1/\hat{\rho}}(\param_{t}) \rVert &\le   \lVert \param_{t+1}-\param_{t} \rVert + \lVert \prox_{\varphi/\hat{\rho}}(\param_{t+1}) - \prox_{\varphi/\hat{\rho}}(\param_{t}) \rVert \\
			&\le \frac{2\hat{\rho}-\rho}{\hat{\rho}-\rho}   \lVert \param_{t+1}-\param_{t} \rVert \\
			&= \frac{2\hat{\rho}-\rho}{\hat{\rho}-\rho}   \lVert \textup{proj}_{\Param}(\param_{t}-\alpha_{t} G(\param_{t},\x_{t+1}))-\textup{proj}_{\Param}(\param_{t}) \rVert \\
			&\le \alpha_{t} \frac{2\hat{\rho}-\rho}{\hat{\rho}-\rho} L,
		\end{align}
		where the last inequality uses \cref{assumption:A3}.
		This estimate and Lemma \ref{lem: bdd_xt_xthat} imply
		\begin{align}
			|  \lVert  \nabla \varphi_{1/\hat{\rho}}(\param_{t+1}) \lVert^{2} &- \lVert \nabla \varphi_{1/\hat{\rho}}(\param_{t}) \rVert^{2}|  \\
			&\le   \lVert  \nabla \varphi_{1/\hat{\rho}}(\param_{t+1}) - \nabla \varphi_{1/\hat{\rho}}(\param_{t}) \rVert  \left(  \lVert  \nabla \varphi_{1/\hat{\rho}}(\param_{t+1}) \lVert + \lVert \nabla \varphi_{1/\hat{\rho}}(\param_{t}) \rVert\right) \\
			&\le \alpha_{t} \frac{2\hat{\rho}-\rho}{\hat{\rho}-\rho}  \frac{4L^{2}}{\hat\rho-\rho}.
		\end{align}
		Hence \eqref{eq:pf_moreau_lipschitz} follows, as desired.
		
		Finally, assume $f$ is continuously differentiable. Choose a subsequence $t_{k}$ such that $\hat{\param}_{t}$ converges to some limit point $\hat{\param}_{\infty}$. We will argue that $\param_{t}\rightarrow \hat{\param}_{\infty}$ almost surely as $t\rightarrow \infty$ and $\hat{\param}_{\infty}$ is a stationary point of $f$ over $\Param$. By  \eqref{eq:Moreau_near_stationary} and the first part of \textbf{(ii)}, it holds that $\lVert \hat{\param}_{t} - \param_{t} \rVert + \textup{dist}(\mathbf{0}, \partial \varphi(\hat{\param}_{t}))\rightarrow 0$ almost surely as $t\rightarrow \infty$. By triangle inequality $ \| \hat{\param}_{\infty} - \param_t \| \leq \| \hat{\param}_{\infty} - \hat{\param}_t \| + \|\hat{\param}_t - \param_t \|$, this implies $\hat{\param}_t\rightarrow \hat{\param}_{\infty}$. 
		
		Next, fix arbitrary $\param\in \Param \setminus \{ \hat{\param}_{\infty} \}$. Since $\hat{\param}_{t}\rightarrow \hat{\param}_{\infty}\ne \param$, it holds that $\param\ne \hat{\param}_{t}$ for all sufficiently large $t$. Note that 
		\begin{multline}
			\left| \left\langle \nabla f(\hat{\param}_{\infty}),\, \frac{\param-\hat{\param}_{\infty}}{\lVert \param - \hat{\param}_{\infty} \rVert} \right\rangle -  \left\langle \nabla f(\hat{\param}_{t}),\, \frac{\param-\hat{\param}_{t}}{\lVert \param - \hat{\param}_{t} \rVert} \right\rangle \right| \le \lVert \nabla f(\hat{\param}_{\infty}) - \nabla f (\hat{\param}_{t}) \rVert  \\
			+ \left| \left\langle \nabla f(\hat{\param}_{\infty}),\, \frac{\param-\hat{\param}_{\infty}}{\lVert \param - \hat{\param}_{\infty} \rVert} - \frac{\param-\hat{\param}_{t}}{\lVert \param - \hat{\param}_{t} \rVert} \right\rangle \right|.
		\end{multline}
		The last term tends to zero since $\hat{\param}_{t}\rightarrow \hat{\param}_{\infty}$ and the function $\param'\mapsto \frac{\param-\param'}{\lVert \param-\param'\rVert}$ is continuous whenever $\param'\ne \param$. Also, since $\nabla f$ is continuous and $\hat{\param}_{t}\rightarrow \hat{\param}_{\infty}$, the first term also tends to zero as $t\rightarrow \infty$. Then by using the relation  \eqref{eq:def_gap_stationarity}, we get 
		\begin{align}
			\left\langle \nabla f(\hat{\param}_{\infty}),\, \frac{\param-\hat{\param}_{\infty}}{\lVert \param - \hat{\param}_{\infty} \rVert} \right\rangle  \ge  \left\langle \nabla f(\hat{\param}_{t}),\, \frac{\param-\hat{\param}_{t}}{\lVert \param - \hat{\param}_{t} \rVert} \right\rangle  - o(1)  \ge -\textup{dist}(\mathbf{0},  \partial \varphi(\hat{\param}_{t}))  - o(1)
		\end{align}
		for all sufficiently large $t\ge 1$. by the first part of \textbf{(ii)} and  \eqref{eq:Moreau_near_stationary}, we have $\textup{dist}(\mathbf{0}, \partial \varphi(\hat{\param}_{t}))\rightarrow 0$ as $t\rightarrow \infty$. But since the left hand side does not depend on $t$, it implies that the left hand side above is nonnegative. As $\param\in\Param \setminus \{\hat{\param}_{\infty} \}$ is arbitrary, we conclude that $\hat{\param}_{\infty}$ is a stationary point of $f$ over $\Param$.
	\end{proof}
	
	
 	\section{Proof for Section~\ref{sec: adagrad}}\label{app: adagrad_appendix}	
  
	\begin{theorem}[Theorem~\ref{th: th_adagrad} in the main text]\label{th: th_adagrad_appendix}
		Let \cref{assumption:A2}-\ref{assumption:A3} and \cref{assumption:A4} hold and $(\param_{t})_{t\ge 1}$ be a sequence generated by Algorithm \ref{algorithm:adagrad}. 
		Fix $\hat{\rho}>\rho$ and a nondecreasing, diverging sequence $(k_{t})_{t\ge 1}$. 
		Then,
		for each $T\ge 1$,
		{\small
			\begin{align}
				\E\left[     \lVert \nabla \varphi_{1/\hat{\rho}}(\param_{T}^{\mathrm{out}}) \rVert^{2} \right]   &\le \frac{\hat\rho^2 L}{T(\hat\rho-\rho)} \Bigg( \frac{C_\varphi\sqrt{v_0+TL^2}}{\alpha\hat\rho L} + \sqrt{T} \\
				&+ \frac{2(L_1 + \hat\rho)}{\hat\rho-\rho} \big( \sqrt{T}k_T + \frac{\sqrt{T}k_T\alpha^2}{2}\log(1+v_0^{-1}TL^2)\big)  + \frac{2L}{\hat\rho-\rho}\sum_{t=1}^T \mathbb{E}[\Delta_{[t-k_t, t+1]}]\Bigg) \\
				&=O\left(\frac{k_T \log(TL^2)}{\sqrt{T}} +  \frac{1}{T}\sum_{t=1}^{T} \mathbb{E}[\Delta_{[t-k_{t}, t+1]}]\right).
			\end{align}
		}
	\end{theorem}
 
	\begin{proof}[Proof of Theorem~\ref{th: th_adagrad}]
		We proceed as the proof of Thm.~\ref{th: th_proj_sgd}, but with the difference that $\alpha_t$ is random and depends on the history of observed stochastic gradients, with $G(\param_t, \x_{t+1})$ being the last stochastic gradient that $\alpha_t$ depends on.
		
		We estimate as in the first chain of inequalities in the proof of Thm.~\ref{th: th_proj_sgd} with $\alpha_t$ dividing both sides and by omitting the expectation because of the randomness of $\alpha_t$.
		In particular, we have
		\begin{align}
			\frac{1}{\alpha_t} \varphi_{1/\hat{\rho}} (\param_{t+1}) &\le \frac{1}{\alpha_t}\left[f(\hat{\param}_{t}) + \frac{\hat{\rho}}{2} \lVert \param_{t+1} - \hat{\param}_{t}\rVert^{2}\right]  \\
			&= \frac{1}{\alpha_t}\left[ f(\hat{\param}_{t}) + \frac{\hat{\rho}}{2}  \left\lVert \textup{proj}_{\Param}(\param_{t} - \alpha_{t} G(\param_{t}, \x_{t+1})) -\textup{proj}_{\Param}(\hat{\param}_{t}) \right\rVert^{2} \right]   \\
			&\le \frac{1}{\alpha_t}\left[ f(\hat{\param}_{t}) + \frac{\hat{\rho}}{2}  \left\lVert (\param_{t}-\hat{\param}_{t}) - \alpha_{t} G(\param_{t}, \x_{t+1}) \right\rVert^{2}  \right]   \\
			&\le \frac{1}{\alpha_t}\left[ f(\hat{\param}_{t})  + \frac{\hat{\rho}}{2}  \lVert \param_{t} - \hat{\param}_{t}\rVert^{2} \right] + \hat{\rho} \langle \hat{\param}_{t} - \param_{t},\, G(\param_{t}, \x_{t+1})\rangle   + \frac{\alpha_{t} \hat{\rho} \|G(\param_t, \x_{t+1})\|^2 }{2} \\
			&=\frac{1}{\alpha_t} \varphi_{1/\hat{\rho}}(\param_{t}) + \hat{\rho} \langle \hat{\param}_{t} - \param_{t},\, G(\param_{t}, \x_{t+1})\rangle   + \frac{\alpha_{t} \hat{\rho} \|G(\param_t, \x_{t+1})\|^2 }{2}.\label{eq: roit4}
		\end{align}
		Proceeding as in the proof of Theorem \ref{th: th_proj_sgd}, namely, by taking expectation conditional on $\mathcal{F}_{t-k}$, using Lemma \ref{lem:TV_linear_change_bound}, using~\eqref{eq: fke3}, and then integrating $\mathcal{F}_{t-k}$ out, we  obtain
		\begin{multline}
			\E\left[ \frac{1}{\alpha_t}\varphi_{1/\hat{\rho}} (\param_{t+1}) \right] -  \E\left[\frac{1}{\alpha_t} \varphi_{1/\hat{\rho}}(\param_{t})\right] \le \hat{\rho} \E\left[   f(\hat{\param}_{t}) - f(\param_{t}) + \frac{\rho}{2} \lVert \param_{t} - \hat{\param}_{t} \rVert^{2}   \right] \\
			+ \mathbb{E} \left[\frac{\alpha_{t} \hat{\rho} \| G(\param_t, \x_{t+1}) \|^2 }{2}\right] \\
			+ \hat\rho \mathbb{E}\left[ \frac{{4}L^2}{\hat\rho-\rho} \, \Delta_{[t-k,t]}  + \frac{2L(L_1+\hat\rho) }{\hat\rho-\rho} \sum_{s=t-k}^{t-1} \alpha_s \|G(\param_{s}, x_{s+1})\|. \right].
		\end{multline}
		The only difference from before is that while bounding $\|\param_s - \param_{s-1} \|$ in Lem.~\ref{lem:TV_linear_change_bound} we did not use the worst case bound for $\|G(\param_{s}, x_{s+1})\|$ as in \cref{assumption:A2}.
		
		We use~\eqref{eq: siw3} on this inequality with $k=k_{t}$ where $k_{t}$ is nondecreasing, sum for $t\in \{1, 2, \dots, T\}$ and rearrange to get
		\begin{multline}
			\frac{\hat\rho-\rho}{\hat\rho} \sum_{t=1}^{T}\mathbb{E}\left[ \| \nabla \varphi_{1/\hat\rho}(\param_t) \|^2 \right]\leq \sum_{t=1}^{T} \mathbb{E}\left[ \frac{\varphi_{1/\hat\rho}(\param_{t})-\varphi_{1/\hat\rho}(\param_{t+1})}{\alpha_t} \right] + \sum_{t=1}^{T}\mathbb{E} \left[\frac{\alpha_{t} \hat{\rho} \| G(\param_t, \x_{t+1}) \|^2 }{2}\right] \\
			+ \sum_{t=1}^{T}\hat\rho \mathbb{E}\left[ \frac{ {4} L^2}{\hat\rho-\rho} \, \Delta_{[t-k_{t},t]}  + \frac{2L(L_1+\hat\rho) }{\hat\rho-\rho} \sum_{s=t-k_{t}}^{t-1} \alpha_s \|G(\param_{s}, x_{s+1})\| \right].\label{eq: wie3}
		\end{multline}
		We continue to upper bound the terms on the RHS of this inequality.
		We use Lem.~\ref{lem: num_seq} to bound
		\begin{equation}\label{eq: ig3}
			\sum_{t=1}^{T} \alpha_t \| G(\param_t, \x_{t+1}) \|^2 = \sum_{t=1}^{T} \frac{\alpha}{\sqrt{v_0+\sum_{j=1}^t \| G(\param_j, \x_{j+1}) \|^2}} \| G(\param_t, \x_{t+1}) \|^2
			\leq 2\sqrt{\sum_{t=1}^{T} \| G(\param_t, \x_{t+1})\|^2 }, 
		\end{equation}
  {where we also used that $v_0 > 0$.}
		By taking expectation, and using Jensen's inequality, we get 
		\begin{equation}\label{eq:ig31}
			\E\left[ \sum_{t=1}^{T} \alpha_t \| G(\param_t, \x_{t+1}) \|^2 \right]  \leq \E\left[  2\sqrt{\sum_{t=1}^{T} \| G(\param_t, \x_{t+1})\|^2 } \right] \le 2\sqrt{ \sum_{t=1}^{T} \E\left[\| G(\param_t, \x_{t+1})\|^2 \right]  }
			\le 2 \sqrt{T} {L}.
		\end{equation}
		We next use~\cref{assumption:A4} to obtain
		\begin{equation}
			\sum_{t=1}^T \left(\frac{1}{\alpha_t} - \frac{1}{\alpha_{t-1}}\right) \varphi_{1/\hat\rho}(\param_t) \leq \sum_{t=1}^T \left| \frac{1}{\alpha_t} - \frac{1}{\alpha_{t-1}}\right| |\varphi_{1/\hat\rho}(\param_t)|
			\leq  C_{\varphi}\sum_{t=1}^T  \left(\frac{1}{\alpha_t} - \frac{1}{\alpha_{t-1}}\right)
			\leq   C_{\varphi}\frac{\sqrt{v_0+ TL^2}}{\alpha}.\label{eq: ig4}
		\end{equation}
		since $\frac{1}{\alpha_t} = \frac{\sqrt{v_0+\sum_{j=1}^t \| G(\param_j, \x_{j+1})\|^2}}{\alpha}$ is monotonically nondecreasing in $t$.
		
		It remains to estimate the last term on~\eqref{eq: wie3} which is the main additional error term that is due to dependent data.
		For convenience, let us define $\alpha_s \| G(\param_s, \x_{s+1})\|=0$ for $s\leq 0$. Then we have 
		\begin{equation*}
			\sum_{t=1}^T \sum_{s=t-k_t}^{t-1} \alpha_s \|G(\param_{s}, x_{s+1})\| \leq \sum_{t=1}^T \sum_{s=t-k_T}^{t-1} \alpha_s \|G(\param_{s}, x_{s+1})\|,
		\end{equation*}
		where $\alpha_s = \frac{\alpha}{\sqrt{v_0+\sum_{j=1}^s \| G(\param_j, \x_{j+1}) \|^2}}$ and the inequality used that $k_t$ is nondecreasing.
		
		By Young's inequality, we can upper bound this term as
		\begin{align}
			\sum_{t=1}^T \sum_{s=t-k_T}^{t-1} \alpha_s \|G(\param_{s}, x_{s+1})\| &= \sum_{t=1}^T \left(\frac{(k_T)^{1/2}}{t^{1/4}} \right) \left( \frac{t^{1/4}}{(k_T)^{1/2}}\sum_{s=t-k_T}^{t-1} \alpha_s \|G(\param_{s}, x_{s+1})\|\right)  \\
			&\leq \sum_{t=1}^T \frac{k_T+1}{2\sqrt{t}} + \sum_{t=1}^T \frac{\sqrt{t}}{2k_T}\left(\sum_{s=t-k_T}^{t-1} \alpha_s \| G(\param_{s}, \x_{s+1}) \| \right)^2 \\
			&\leq \sqrt{T}k_T + \sum_{t=1}^T \frac{\sqrt{t}}{2k_T}\left(\sum_{s=t-k_T}^{t-1} \alpha_s \| G(\param_{s}, \x_{s+1}) \| \right)^2. \label{eq: wjk3}
		\end{align}
		We continue estimating the last term on RHS. Using the inequality $(\sum_{i=1}^{m}a_{i})^{2}\le m\sum_{i=1}^{m} a_{i}^{2}$ that follows from Cauchy-Schwarz, we get 
		\begin{align}
			\sum_{t=1}^T \frac{\sqrt{t}}{2k_T} \left( \sum_{s=t-k_T}^{t-1} \alpha_s \| G(\param_s, \x_{s+1})\| \right)^2 &\leq \sum_{t=1}^T \frac{\sqrt{t}}{2}  \sum_{s=t-k_T}^{t-1} \alpha_s^2 \| G(\param_s, \x_{s+1})\|^2\notag \\
			&\leq \frac{\sqrt{T}}{2} \sum_{t=1}^T   \sum_{s=t-k_T}^{t-1} \alpha_s^2 \| G(\param_s, \x_{s+1})\|^2 \notag \\
			&= \frac{\sqrt{T}}{2} \sum_{s=1}^{k_T}   \sum_{t=1}^{T-s} \alpha_t^2 \| G(\param_t, \x_{t+1})\|^2,
			\label{eq: fir3}
		\end{align}
		since for any $(c_s)$, we have $\sum_{t=1}^T \sum_{s=t-k_T}^{t-1} c_s = (c_{1-k_T} + c_{2-k_T} + \dots + c_0) + (c_{2-k_T} + c_{3-k_T} + \dots + c_{1}) +\dots+ (c_{T-k_T} + c_{T-k_T+1} + \dots + c_{T-1}) = (c_{1-k_T} + c_{2-k_T} + \dots + c_{T-k_T}) + (c_{2-k_T} + c_{3-k_T} + \dots + c_{T-k_T+1}) + \dots + (c_0 + c_{1} + \dots + c_{T-1}) = \sum_{s=1}^{k_T} \sum_{t=1-s}^{T-s} c_t$. Since in our case $c_t = 0$ for $t < 1$, we have also that $\sum_{s=1}^{k_T} \sum_{t=1-s}^{T-s} c_t = \sum_{s=1}^{k_T} \sum_{t=1}^{T-s} c_t$.
		
		We now have that the rightmost summation in~\eqref{eq: fir3} is of the form in the first inequality in Lem.~\ref{lem: num_seq}. 
		We continue from \eqref{eq: fir3} by using the definition of $\alpha_t$
		\begin{align*}
			\sum_{t=1}^T \frac{\sqrt{t}}{2k_T} \Big( \sum_{s=t-k_T}^{t-1} \alpha_s \| G(\param_s, &\x_{s+1})\| \Big)^2 \leq \frac{\sqrt{T}}{2} \sum_{s=1}^{k_T}   \sum_{t=1}^{T-s} \alpha_t^2 \| G(\param_t, \x_{t+1})\|^2 \\
			&= \frac{\sqrt{T}}{2} \sum_{s=1}^{k_T}   \sum_{t=1}^{T-s} \frac{\alpha^2}{v_0+\sum_{i=1}^t \| G(\param_i, \x_{i+1})\|^2} \| G(\param_t, \x_{t+1})\|^2 \\
			&\leq \frac{\sqrt{T}\alpha^2}{2} \sum_{s=1}^{k_{T}}  \log\left( 1+ v_0^{-1}\sum_{t=1}^{T-s} \| G(\param_t, \x_{t+1})\|^2 \right) \\
			&\leq \frac{\sqrt{T}k_T\alpha^2}{2}  \log\left( 1+ v_0^{-1}\sum_{t=1}^{T} \| G(\param_t, \x_{t+1})\|^2 \right),
		\end{align*}
		where the third line applies the second inequality in Lem.~\ref{lem: num_seq}.
		Using this estimation on~\eqref{eq: wjk3} gives us
		\begin{equation}
			\sum_{t=1}^T \sum_{s=t-k_T}^{t-1} \alpha_s \|G(\param_{s}, x_{s+1})\|\leq \sqrt{T}k_T + \frac{\sqrt{T}k_T\alpha^2}{2} \log\left( 1+ v_0^{-1}\sum_{t=1}^{T} \| G(\param_t, \x_{t+1})\|^2 \right).\label{eq: ig5}
		\end{equation}
		Collecting~\eqref{eq: ig3},~\eqref{eq: ig4} and~\eqref{eq: ig5} on~\eqref{eq: wie3} results in the bound
		\begin{multline}
			\frac{\hat\rho-\rho}{\hat\rho} \sum_{t=1}^{T}\mathbb{E}\left[ \| \nabla \varphi_{1/\hat\rho}(\param_t) \|^2 \right]\leq \frac{\sqrt{v_0+TL^2} C_{\varphi}}{\alpha} + \hat\rho L \sqrt{T}
			+ \sum_{t=1}^{T}\hat\rho \mathbb{E}\left[ \frac{4 L^2}{\hat\rho-\rho} \, \Delta_{[t-k,t]} \right] \\
			+ 2L\hat\rho \frac{L_1+\hat\rho }{\hat\rho-\rho} \left(\sqrt{T}k_T + \frac{\sqrt{T}k_T\alpha^2}{2} \log\left( 1+ v_0^{-1}\sum_{t=1}^{T} \| G(\param_t, \x_{t+1})\|^2 \right) \right).
		\end{multline}
		We divide both sides by $T$ to conclude.
	\end{proof}

	\section{Stochastic Heavy Ball with Dependent Data}\label{sec: stoc_hb}
	In this section, we focus on stochastic heavy ball method (Algorithm \ref{algorithm:shb}), a popular SGD method with momentum, which dates back to~\citep{polyak1964some}.
	This method is analyzed for convex optimization in~\citep{ghadimi2015global} and for constrained and stochastic nonconvex optimization with i.i.d. data in~\citep{mai2020convergence}.
	Some features of our analysis simplify and relax some conditions from the analysis in~\citep{mai2020convergence} even with i.i.d. data, see Lem.~\ref{lem: ekr4} and Remark~\ref{rem: shb} for the details.
	\begin{algorithm}[H]
		\small
		\caption{Stochastic heavy ball (momentum SGD)}
		\label{algorithm:shb}
		\begin{algorithmic}[1]
			\STATE \textbf{Input:} Initialize $\param_{0}\in \Param \subseteq \R^{p}$; $T > 0$;\,   $(\alpha_{t})_{t\ge 1}$;\, $\beta > 0$;\, $z_1 > 0$
			\STATE Optionally, sample $\tau$ from  $\{1,\dots,T\}$ independently of everything else where $\P(\tau=k)=\frac{\alpha_k}{\sum_{t=1}^T \alpha_t}$. 
			\STATE \textbf{For $t=1,2,\dots,T$ do:}
			\STATE \qquad Sample $\x_{t+1}$ from the conditional distribution $\pi_{t+1}=\pi_{t+1}(\cdot\,|\, \x_{1},\dots,\x_{t})$
			\STATE \qquad $\param_{t+1} \leftarrow \proj_{\Param} \left(  \param_{t} - \alpha_t z_t \right)$
			\STATE \qquad $z_{t+1} = \beta G(\param_{t+1}, \x_{t+1}) + \frac{1-\beta}{\alpha_{t+1}} \left( \param_t - \param_{t+1} \right)$
			\STATE \textbf{End for}
			\STATE \textbf{Return:} $\param_{T}$ (Optionally, return $\param_{\tau}$)
		\end{algorithmic}
	\end{algorithm}
	We start with a lemma showing a bound on the norm of the sequence $(z_k)$. We use this lemma to simplify some of the estimations in~\citep{mai2020convergence} that analyzed the algorithm in the i.i.d. case.
	\begin{lemma}\label{lem: ekr4}
		Let $(z_t)$ be defined as Alg.~\ref{algorithm:shb} and let~\cref{assumption:A3} hold. Then, we have
		\begin{align}
			\lVert z_{t+1} \rVert^{2} \le \beta L + (1-\beta) (\alpha_{t}/\alpha_{t+1})^{2} \lVert z_{t} \rVert^{2} \quad \text{for all $t\ge 1$}
		\end{align}
		and 
		\begin{equation*}
			\sum_{t=1}^T \beta \alpha_t^2 \|z_t\|^2 \leq \alpha_1^2\| z_1 \|^2 + \beta L^2 \sum_{t=1}^T \alpha_{t+1}^2.
		\end{equation*}
	\end{lemma}
	\begin{proof}
		By the definition of $z_t$ and convexity of $\| \cdot \|^2$, we have
		\begin{align}
			\| z_{t+1} \|^2 &\leq \beta \| G(\param_{t+1}, \x_{t+1})\|^2 + \frac{1-\beta}{\alpha_{t+1}^2} \| \param_t - \param_{t+1} \|^2 \\
			&\leq \beta \| G(\param_{t+1}, \x_{t+1})\|^2 + \frac{(1-\beta)\alpha_t^2}{\alpha_{t+1}^2}\| z_t \|^2,
		\end{align}
		where the second inequality used that $\param_t\in\Param$ and that $\proj_{\Param}$ is nonexpansive. Using \cref{assumption:A3} and dividing both sides by $\alpha_{t+1}^{2}$ gives the first inequality in the assertion. Also, by multiplying both sides of the inequality by $\alpha_{t+1}^2$, we have
		\begin{equation}\label{eq: eur4}
			\alpha_{t+1}^2 \| z_{t+1} \|^2 \leq \beta\alpha_{t+1}^2 \| G(\param_t, \x_{t+1})\|^2 + (1-\beta)\alpha_t^2 \| z_t \|^2.
		\end{equation}
		By using~\cref{assumption:A3} in~\eqref{eq: eur4}, then rearranging, multiplying both sides by $t^{\delta}$, and summing~\eqref{eq: eur4} give
		\begin{equation*}
			\sum_{t=1}^T \beta t^{\delta} \alpha_t^2 \| z_t \|^2 \leq - T^{\delta}\alpha_{T+1}^2\| z_{T+1} \|^2 + \alpha_1^2 \| z_1 \|^2 + \beta L^2\sum_{t=1}^T  t^{\delta}\alpha_{t+1}^2.
		\end{equation*}
		Removing the nonpositive term on the RHS gives the result.
	\end{proof}

  	\begin{theorem}[extended version of Theorem~\ref{th: th_shb} in the main text]\label{th: th_shb_appendix}
		Let \cref{assumption:A2}-\cref{assumption:A3} hold. Let $(\param_{t})_{t\ge 1}$ be a sequence generated by Algorithm \ref{algorithm:shb}. 
		Fix $\hat{\rho}\geq 2\rho$. 
		Then, for any $\beta \in (0, 1]$,
		\begin{description}
			\item[(i)] For each $T\ge 1$:
			{\small
				\begin{multline}
					\E\left[     \lVert \nabla \varphi_{1/\hat{\rho}}(\bar\param_{T}^{\mathrm{out}}) \rVert^{2} \right]   \le \frac{\hat\rho}{\sum_{t=1}^T \alpha_t } \Bigg( \varphi_{1/\hat\rho}(\bar{\param}_0) - \inf \varphi_{1/\hat\rho} + \frac{(1+\beta(1-\beta))L^2}{2\beta^2} \sum_{t=1}^T \alpha_t^2 \\ 
					+ \frac{1-\beta}{2\beta^2}\alpha_1 \| z_1\|^2
					+ \frac{ \commHL{4} L^2}{\hat\rho - \rho} \sum_{t=1}^T\alpha_t \mathbb{E}[\Delta_{[t-k_t, t]}] + \frac{2L^2(L_1 + \hat\rho)}{\hat\rho-\rho} \sum_{t=1}^T k_t\alpha_t\alpha_{t-k} \Bigg).
				\end{multline}
			}
			
			\item[(ii)] (Global convergence) Further assume that $\alpha_{t}/\alpha_{t+1}\rightarrow 1$ as $t\rightarrow \infty$ and $\sum_{t=1}^\infty k_t\alpha_t\alpha_{t-k_t} < \infty$. Then $\lVert \nabla \varphi_{1/\hat{\rho}}(\hat{\param}_{t}) \rVert \rightarrow 0$ as $t\rightarrow \infty$ almost surely. Furthermore, $\param_{t}$ converges to the set of all stationary points of $f$ over $\Param$. 
		\end{description}
	\end{theorem}
 
	\begin{remark}\label{rem: shb}
		Our analysis is more flexible compared to~\citep{mai2020convergence} even when restricted to the i.i.d. case.
		In this case, we allow variable step sizes $\alpha_t=\frac{\gamma}{\sqrt{t}}$ whereas~\citep{mai2020convergence} requires constant step size $\alpha_t=\alpha = \frac{\gamma}{\sqrt{T}}$.
		We can also use any $\beta\in(0, 1]$ whereas~\citep{mai2020convergence} restricts to $\beta=\alpha$. 
		This point is important since in practice $\beta$ is used as a tuning parameter.
	\end{remark}
	\begin{proof}
		We proceed as the proof of~Thm.~\ref{th: th_proj_sgd}. However, following the existing analyses for SHB~\citep{ghadimi2015global,mai2020convergence} we use the following iterate $\bar{\param}_t = \param_t + \frac{1-\beta}{\beta}\left( \param_t - \param_{t-1} \right)$ and also $\hat{\param}_t = \prox_{\varphi/\hat{\rho}}(\bar{\param}_t)$.
		The useful property of $\bar{\param}_t$ exploited in~\citep{mai2020convergence} with constant step sizes (and also in~\citep{ghadimi2015global} in the unconstrained setting), is that
		\begin{align}
			\| \bar{\param}_{t+1} - \hat{\param}_t \|^2 &= \left\| \param_{t+1} + \frac{1-\beta}{\beta}(\param_{t+1} - \param_t) - \hat{\param}_t \right\|^2 = \frac{1}{\beta^2} \| \param_{t+1} - [(1-\beta) \param_t + \beta \hat{\param}_t] \|^2 \\
			&\leq \frac{1}{\beta^2} \| \param_t - \alpha_t z_t - [(1-\beta) \param_t + \beta \hat{\param}_t] \|^2 \\
			&= \| \bar{\param}_t -  \hat{\param}_t - \alpha_t G(\param_t, \x_{t}) \|^2,\label{eq: rht4}
		\end{align}
		where the inequality used that $\param_t, \param_{t+1}, \hat{\param}_t$ and their convex combinations are feasible points and the projection is nonexpansive.
		The last step is by simple rearrangement and using the definition of $z_t$.
		
		On the first chain of inequalities in~Thm.~\ref{th: th_proj_sgd}, we evaluate $\varphi_{1/\hat\rho}$ at $\bar{\param}_{t+1}$ instead of $\param_{t+1}$ and then use the inequality in~\eqref{eq: rht4} to deduce
		\begin{align}
			\E\left[ \varphi_{1/\hat{\rho}} (\bar{\param}_{t+1}) \,\bigg|\, \mathcal{F}_{t-k} \right] &\le \E\left[ f(\hat{\param}_{t}) + \frac{\hat{\rho}}{2} \lVert \bar{\param}_{t+1} - \hat{\param}_{t}\rVert^{2} \,\bigg|\, \mathcal{F}_{t-k} \right] \\
			&\leq \E\left[ f(\hat{\param}_{t}) + \frac{\hat{\rho}}{2} \lVert \bar{\param}_{t} - \alpha_t G(\param_t, \x_t) - \hat{\param}_{t}\rVert^{2} \,\bigg|\, \mathcal{F}_{t-k} \right].\label{eq: fir4}
		\end{align}
		We expand the square to obtain
		\begin{align}
			\left\lVert \bar{\param}_{t} - \alpha_t G(\param_t, \x_t) - \hat{\param}_{t}\right\rVert^{2} = \| \bar{\param}_t - \hat{\param}_t \|^2 - 2\alpha_t \langle \bar{\param}_t - \hat{\param}_t, G(\param_t, \x_t) \rangle + \alpha_t^2 \| G(\param_t, \x_t)\|^2.
		\end{align}
		By using the last estimate on~\eqref{eq: fir4} and using the definition of $\varphi_{1/\hat\rho}$, $\bar{\param}_t$ along with~\cref{assumption:A3} gives
		\begin{multline}
			\E\left[ \varphi_{1/\hat{\rho}} (\bar{\param}_{t+1}) \,\bigg|\, \mathcal{F}_{t-k} \right] \le \E\bigg[ \varphi_{1/\hat{\rho}} (\bar{\param}_{t})  - \hat\rho\alpha_t \langle {\param}_t - \hat{\param}_t, G(\param_t, \x_t) \rangle \\
			- \frac{\hat\rho\alpha_t(1-\beta)}{\beta} \langle {\param}_t - {\param}_{t-1}, G(\param_t, \x_t) \rangle  + \frac{\hat\rho\alpha_t^2 L^2}{2} \,\bigg|\, \mathcal{F}_{t-k} \bigg].\label{eq: fir5}
		\end{multline}
		We estimate the third term on RHS by Young's inequality, the nonexpansiveness of the projection and~\cref{assumption:A3}
		\begin{align}
			- \frac{\hat\rho\alpha_t(1-\beta)}{\beta} \langle {\param}_t - {\param}_{t-1}, G(\param_t, \x_t) \rangle &\leq \frac{\hat\rho(1-\beta)}{2\beta} \left( \| \param_t - \param_{t-1} \|^2 + \alpha_t^2 \| G(\param_t, \x_t) \|^2 \right) \\
			&\leq \frac{\hat\rho(1-\beta)}{2\beta} \left( \alpha_{t-1}^2 \| z_{t-1} \|^2 + \alpha_t^2 L^2 \right).
		\end{align}
		We insert this estimate back to~\eqref{eq: fir5} and use Lem.~\ref{lem:TV_linear_change_bound} as in the proof of Thm.~\ref{th: th_proj_sgd} to obtain
		\begin{align}
			\E\Bigg[ \varphi_{1/\hat{\rho}} (\bar{\param}_{t+1}) &\,\bigg|\, \mathcal{F}_{t-k} \Bigg] \le \E\bigg[ \varphi_{1/\hat{\rho}} (\bar{\param}_{t+1})  - \hat\rho\alpha_t \langle {\param}_t - \hat{\param}_t, G(\param_t, \x_t) \rangle + \frac{\hat\rho(1-\beta)}{2\beta}\alpha_{t-1}^2 \|z_t\|^2 \\
			&\hspace{6cm}+ \frac{\hat\rho(2-\beta)\alpha_t^2 L^2}{2\beta} \,\bigg|\, \mathcal{F}_{t-k} \bigg] \notag \\
			&\le \E\left[ \varphi_{1/\hat{\rho}}(\param_{t}) \,\bigg|\, \mathcal{F}_{t-k}\right] - \hat{\rho} \alpha_{t}  \langle\param_t- \hat{\param}_{t} ,\, \E_{\x\sim \pi}\left[ G(\param_{t}, \x) \right] \rangle  + \frac{\hat\rho(2-\beta)\alpha_{t}^{2} L^{2} }{2\beta} \notag \\
			&\qquad + \hat\rho\alpha_t \left( \frac{2L^2}{\hat\rho-\rho} \, \Delta_{[t-k,t]}  + k\frac{2L^2 L_1+\hat\rho L^2}{\hat\rho-\rho} \alpha_{t-k}\right) \\
			&\hspace{5cm}+ \frac{\hat\rho(1-\beta)}{2\beta} \alpha_{t-1}^2 \mathbb{E}\left[\| z_{t-1}\|^2 \,\bigg|\, \mathcal{F}_{t-k} \right].\label{eq: rie3}
		\end{align}
		We now estimate the second term on the RHS
		\begin{align}
			\langle \param_t - \hat{\param}_t&, \mathbb{E}_{\x\sim \pi}[G(\param_t, x)] \rangle \geq f(\param_t) - f(\hat{\param}_t) - \frac{\rho}{2} \| \param_t - \hat{\param}_t\|^2 \\
			&= \left( f(\param_t) + \frac{\hat\rho}{2} \| \param_t - \bar{\param}_t \|^2 \right) - \left( f(\hat{\param}_t) + \frac{\hat\rho}{2} \| \hat{\param}_t - \bar{\param}_t \|^2 \right) - \frac{\hat\rho}{2} \| \param_t - \bar{\param}_t \|^2 \\
			&\hspace{3cm}+ \frac{\hat\rho}{2} \| \hat{\param}_t - \bar{\param}_t\|^2 - \frac{\rho}{2} \| \param_t - \hat{\param}_t\|^2 \\
			&\geq \frac{\hat\rho}{2} \| \hat{\param}_t - \bar{\param}_t \|^2 - \frac{\hat\rho}{2} \| \param_t - \bar{\param}_t \|^2\geq \frac{\hat\rho}{2} \| \hat{\param}_t - \bar{\param}_t \|^2 - \frac{\hat\rho(1-\beta)^2\alpha_{t-1}^2}{2\beta^2} \| z_{t-1} \|^2.\label{eq: tir3}
		\end{align}
		where the first inequality is due to $\rho$-weak convexity of $f$, and the second inequality is by $\hat\rho-\rho$-strong convexity of $f(\cdot) + \frac{\hat\rho}{2} \| \cdot - \bar{\param}_t\|^2$ with the optimizer $\hat{\param}_t$ and $\hat\rho \geq 2\rho$.
		The third inequality is by nonexpansiveness of the projection and the definition of $\bar{\param}_t$.
		
		We use~\eqref{eq: tir3} on~\eqref{eq: rie3}, insert $k=k_t$, integrate out $\mathcal{F}_{t-k}$ and sum to get
		\begin{multline}\label{eq:SHB_infinite_sum}
			\sum_{t=1}^T \hat\rho^2\alpha_t\mathbb{E} \left[  \| \bar{\param}_t - \hat{\param}_t \|^2 \right] \leq -\E\left[ \varphi_{1/\hat{\rho}} (\bar{\param}_{T+1}) \right] +   \varphi_{1/\hat{\rho}}(\param_{1}) +\sum_{t=1}^T\frac{\hat{\rho}(2-\beta)\alpha_{t}^{2}  L^{2} }{2\beta} \\
			+ \sum_{t=1}^T \frac{\hat\rho(1-\beta)^2\alpha_{t-1}^2}{2\beta^2}  \mathbb{E}\| z_{t-1} \|^2
			+\sum_{t=1}^T \hat\rho\alpha_t\left( \frac{4 L^2}{\hat\rho-\rho} \, \mathbb{E}[\Delta_{[t-k_t,t]}]  + k_t\frac{2L^2 L_1+\hat\rho L^2}{\hat\rho-\rho} \alpha_{t-k_t}\right) \\
			+\sum_{t=1}^T \frac{\hat\rho(1-\beta)}{2\beta} \alpha_{t-1}^2 \mathbb{E}\left[\| z_{t-1}\|^2 \right].
		\end{multline}
		Using Lem.~\ref{lem: ekr4} for the terms involving $\|z_t\|^2$ and using $\| \nabla \varphi_{1/\hat\rho}(\bar{\param}_t) \| = \hat\rho\| \hat{\param}_t - \bar{\param}_t \|$ finishes the proof of \textbf{(i)} after simple arrangements.

		Next, we show \textbf{(ii)}. 
		The argument for the second part is identical to that of Theorem \ref{th: th_shb} \textbf{(ii)}. The argument for the first part is also similar to that of Theorem \ref{th: th_proj_sgd} \textbf{(ii)} with a minor modification. Namely, from \eqref{eq:SHB_infinite_sum} and the hypothesis,
		\begin{align}
			\sum_{t=1}^T \hat\rho^2\alpha_t\mathbb{E} \left[  \| \bar{\param}_t - \hat{\param}_t \|^2 \right] < \infty. 
		\end{align}
		Using Fubini's theorem and \eqref{eq:moreau_grad}, this implies 
		\begin{align}
			\sum_{t=1}^T \alpha_t  \| \nabla \varphi_{1/\hat{\rho}}(\bar{\param}_{t}) \|^2  < \infty \qquad \textup{almost surely}. 
		\end{align}
		Hence by Lemma \ref{lem:positive_convergence_lemma}, it suffices to show that 
		\begin{align}
			\left|  \| \nabla \varphi_{1/\hat{\rho}}(\bar{\param}_{t+1}) \|^2  - \| \nabla \varphi_{1/\hat{\rho}}(\bar{\param}_{t}) \|^2 \right| =O(\alpha_{t}).
		\end{align}
		Proceeding as in the proof of Theorem \ref{th: th_proj_sgd} \textbf{(ii)}, the above follows if $\lVert z_{t} \rVert$ is uniformly bounded. 
		
		It remains to show that $\lVert z_{t} \rVert$ is uniformly bounded. For this, it suffices to show that $\lVert z_{t} \rVert^{2} \le 2L $ for all sufficiently large $t\ge 1$. We deduce this from Lemma \ref{lem: ekr4}. If $\beta=1$, the lemma implies $\lVert z_{t}\rVert^{2}\le L$ for all $t\ge 1$, so we may assume $\beta<1$. Proceeding by an induction on $t$, suppose this bound holds for $z_{t}$. Then by Lemma \ref{lem: ekr4}, we have 
		\begin{align}
			\lVert z_{t+1} \rVert^{2} \le  \beta L + 2(1-\beta)  (\alpha_{t}/\alpha_{t+1})^{2}L.
		\end{align}
		Since $\beta<1$ and $\alpha_{t}/\alpha_{t+1}\rightarrow 1$ as $t\rightarrow \infty$, there exists $t_{0}>0$ such that for all $t>t_{0}$, $(1-\beta)  (\alpha_{t}/\alpha_{t+1})^{2}<1-\beta/2$. Therefore, for all $t>t_{0}$, 
		\begin{align}
			\lVert z_{t+1} \rVert^{2} \le \beta L + (1-\beta/2)(2L) = 2L.
		\end{align}
		This shows the assertion. 
	\end{proof}

	\section{Proximal SGD with Dependent Data}\label{sec: prox_extension}
	In this section, we describe how our developments for stochastic gradient method extends to the proximal case, using the ideas from~\citep{davis2019stochastic}.
	In particular, the problem we solve in this section is
	\begin{align}
		\param^{*} \in \argmin_{\param\in \R^{p}} \left( \varphi(\param):= f(\param) + r(\param)\right), \quad f(\param)= \E_{\x\sim \pi}\left[ \ell(\param, \x)  \right],
	\end{align} 
	where $r\colon \mathbb{R}^p \to \mathbb{R} \cup {\{ +\infty \}}$ is a convex, proper, closed function.
	In this case, in step~\ref{eq: woe3} of Algorithm \ref{algorithm:prox_grad}, we use $\prox_{\alpha_t r}$ instead of $\proj_{\Param}$ to define $\param_{t+1}$.

	Recall also that
	\begin{equation*}
		\hat{\param}_t=\prox_{\varphi/\hat{\rho}}(\param_t).
	\end{equation*}
	In the projected case, when $r(\param)$ is the indicator function of the set $\Param$, we had that $\hat{\param}_t \in \Param$.
	This was used, for example, in~\eqref{eq: wej4} to use nonexpansiveness for bounding $\| \param_{t+1} - \hat{\param}_t \|^2$ since $\param_{t+1} = \proj_{\Param}(\param_t - \alpha_t g_t)$.
	In this case, for the same step, one needs an intermediate result derived by~\citep{davis2019stochastic}.
	\begin{lemma}\cite{davis2019stochastic}
		Given the definition of $\hat{\param}_t$, we have for $t\geq 0$
		\begin{align*}
			\hat{\param}_t = \prox_{\alpha_t r}(\alpha_t\hat\rho \param_t - \alpha_t \hat v_t + (1-\alpha_t\hat\rho)\hat{\param}_t),
		\end{align*}
		where $\hat v_t \in \partial f(\hat{\param}_t)$.
	\end{lemma}
	We include the following result combining the ideas from Lem.~\ref{lem:TV_linear_change_bound}, Thm.~\ref{th: th_proj_sgd} and \cite{davis2019stochastic} for proving convergence of proximal stochastic gradient algorithm with dependent data.

 
	\begin{theorem}\label{th:prox_appendix}[Theorem~\ref{th:prox} in the main text]
		Let \cref{assumption:A2}-\ref{assumption:A3} hold, $r$ be convex, proper, closed and $(\param_{t})_{t\ge 1}$ be a sequence generated by Algorithm \ref{algorithm:prox_grad} where we use $\prox_{\alpha_t r}$ instead of $\proj_{\Param}$ in step~\ref{eq: woe3}.
		Fix $\hat{\rho}>\rho$. For each $T\ge 1$, 
		\begin{align}
			&\E\left[     \lVert \nabla \varphi_{1/\hat{\rho}}(\param_{T}^{\mathrm{out}}) \rVert^{2} \right]  \\
			&\quad \le \frac{\hat{\rho}^{2}L^2}{\hat{\rho}-\rho} \frac{1}{\sum_{k=1}^{T} \alpha_{k}}\Big[\frac{ \varphi_{1/\hat{\rho}}(\param_{0}) - \inf \varphi_{1/\hat{\rho}}}{\hat{\rho}L^2}   + 2 \sum_{t=1}^{T} \alpha_{t}^{2} + \frac{2(L_1 + \hat\rho)}{\hat\rho-\rho} \sum_{t=1}^{T} k_{t}\alpha_{t}\alpha_{t-k_{t}} \\
			&\quad+ \frac{\commHL{4}}{\hat\rho-\rho} \sum_{t=1}^{T} \alpha_{t} \mathbb{E}[\Delta_{[t-k_{t}, t]}]\Big].
		\end{align}
	\end{theorem}
	\begin{proof}
		We start the same as~Thm.~\ref{th: th_proj_sgd} and note by the definition of $\hat{\param}_{t+1}$
		\begin{align}
			\varphi_{1/\hat\rho}(\param_{t+1}) \leq \varphi(\hat{\param}_t) + \frac{\hat\rho}{2} \|   \param_{t+1} - \hat{\param}_t \|^2.\label{eq: tur3}
		\end{align}
		We next estimate $\frac{\hat\rho}{2} \|   \param_{t+1} - \hat{\param}_t \|^2$ similar to~\cite{davis2019stochastic} by using $1$-Lipschitzness of $\prox_{\alpha_t g}$. Let $\delta = 1-\alpha_t \hat\rho$ and estimate
		\begin{align}
			\| \param_{t+1} - \hat{\param}_t \|^2 &= \| \prox_{\alpha_t r}(\param_t - \alpha_t g_t) - \prox_{\alpha_t r}(\alpha_t\hat\rho \param_t - \alpha_t \hat v_t + \delta\hat{\param}_t) \|^2 \\
			&\leq \delta^2 \| \param_t - \hat{\param}_t \|^2 - 2\delta\alpha_t \langle \param_t - \hat{\param}_t, G(\param_t, \x_{t}) - \hat v_t \rangle + \alpha_t^2 \| G(\param_t, \x_t) - \hat v_t \|^2,\label{eq: tur3.5}
		\end{align}
		where we skipped some intermediate steps, which are already in~\cite{davis2019stochastic}.
		We note that by Lem.~\ref{lem:TV_linear_change_bound}, we have
		\begin{multline}
			-2\delta\alpha_t \mathbb{E}\left[\langle \param_t - \hat{\param}_t, G(\param_t, \x_{t}) \rangle \,\bigg|\, \mathcal{F}_{t-k} \right] = -2\delta\alpha_t \langle \param_t - \hat{\param}_t, \mathbb{E}_{\x\sim \pi}[G(\param_t, \x_{t})] \rangle + \\
			+2\delta\alpha_t \left( \frac{2L^2}{\hat\rho-\rho} \Delta_{[t-k, t]} + k\frac{2L^2( L_1 + \hat\rho)}{\hat\rho-\rho}\alpha_{t-k} \right).\label{eq: tur4}
		\end{multline}
		We take the conditional expectation of~\eqref{eq: tur3} and use~\eqref{eq: tur3.5} with~\eqref{eq: tur4} to derive
		\begin{multline}
			\E\left[\varphi_{1/\hat\rho}(\param_{t+1}) \, \bigg| \, \mathcal{F}_{t-k}\right] \leq \ \E\left[\varphi(\hat{\param}_t)\, \bigg| \, \mathcal{F}_{t-k}\right] +\delta^2 \| \param_t - \hat{\param}_t \|^2 \\
			-2\delta\alpha_t \langle \param_t - \hat{\param}_t, \mathbb{E}_{\x\sim \pi}[G(\param_t, \x_{t})]\rangle -2\delta\alpha_t \mathbb{E}\left[\langle \param_t - \hat{\param}_t, -\hat v_t \rangle \, \bigg| \, \mathcal{F}_{t-k}\right] \\
			+ \alpha_t^2 \mathbb{E}\left[ \| G(\param_t, \x_{t+1}) - \hat v_t \|^2 \, \bigg| \, \mathcal{F}_{t-k} \right] +2\delta\alpha_t \left( \frac{{4}L^2}{\hat\rho-\rho} \Delta_{[t-k, t]} + k\frac{2L^2( L_1 + \hat\rho )}{\hat\rho-\rho}\alpha_{t-k} \right)
		\end{multline}
		We integrate out $\mathcal{F}_{t-k}$ to obtain
		\begin{multline}
			\E\left[\varphi_{1/\hat\rho}(\param_{t+1}) \right] \leq \ \E\left[\varphi(\hat{\param}_t)\right]+\delta^2 \| \param_t - \hat{\param}_t \|^2 -2\delta\alpha_t \E\left[\langle \param_t - \hat{\param}_t, \mathbb{E}_{\x\sim \pi}[G(\param_t, \x_{t})] - \hat v_t\rangle \right] \\
			+ \alpha_t^2 \mathbb{E}\left[ \| G(\param_t, \x_{t+1}) - \hat v_t \|^2 \right] +2\delta\alpha_t \left( \frac{{4}L^2}{\hat\rho-\rho} \mathbb{E}[\Delta_{[t-k, t]}] + 4k\frac{2L^2 L_1 + \hat\rho L^2}{\hat\rho-\rho}\alpha_{t-k} \right).\label{eq: riw2}
		\end{multline}
		Next, we use that the subdifferential of $\rho$-weakly convex $g$ is $\rho$-hypomonotone (see~\cite{davis2019stochastic}) and $\mathbb{E}_{\x\sim \pi}[G(\param_t, \x)] \in \partial f(\param_t)$ and $\hat v_t \in \partial f(\hat{\param}_t)$ to derive
		\begin{equation}\label{eq: riw3}
			\langle \param_t - \hat{\param}_t, \mathbb{E}_{\x\sim \pi}[G(\param_t, \x_{t})] - \hat v_t\rangle \geq - \rho \| \param_t - \hat{\param_t} \|^2.
		\end{equation}
		We combine~\eqref{eq: riw3} with $\| \hat v_t \|^2 \leq L^2$ (see~\cite{davis2019stochastic}) on~\eqref{eq: riw2} to derive
		\begin{align}
			\E\left[\varphi_{1/\hat\rho}(\param_{t+1}) \right] &\leq \ \E\left[\varphi_{1/\varphi}({\param_t})\right] -\hat\rho(\hat\rho-\rho)\alpha_t \mathbb{E}\| \param_t - \hat{\param}_t \|^2
			+ 4\alpha_t^2L^2 \\
			&\qquad +2\delta\alpha_t \left( \frac{4 L^2}{\hat\rho-\rho} \mathbb{E}[\Delta_{[t-k, t]}] + k\frac{2L^2( L_1 + \hat\rho) }{\hat\rho-\rho}\alpha_{t-k} \right).
		\end{align}
		We sum the inequality and argue similarly as in the proof of Theorem \ref{th: th_proj_sgd} to finish the proof.
	\end{proof}
	
	\section{Proofs for Section~\ref{sec: smooth}}\label{app: smooth}

	\begin{lemma}\label{lem: post_process_output}
		Let Assumptions~\ref{assumption:A2},~\ref{assumption:A1},~\ref{assumption:A3} hold, $\Param$ be compact, and $\Delta_{[t- k_t, t]} = O(\lambda^{k_t})$ for $\lambda < 1$. 
		Let an algorithm output $\param_{t}$ (for example, a randomly selected iterate) such that $\mathbb{E}\| \param_t - \proj_{\Param}(\param_t - \nabla f(\param_t)) \| \leq \varepsilon$ with $\tilde{O}(\varepsilon^{-4})$ queries to $\nabla \ell(\param, \x)$. Then, for $\breve{\param}_{t+1} = \proj_{\Param}\left(\param_t - \tilde{\nabla}f(\param_t)) \right)$ with $\tilde{\nabla} f(\param_t) = \frac{1}{\hat N}\sum_{i=1}^{\hat N} \nabla \ell(\param_t, \x^{(i)})$ with $\hat N=O(\varepsilon^{-2})$ samples, we have that 
		\begin{equation*}
			\mathbb{E}\left[\mathrm{dist}({\mathbf{0}}, \partial(f+\iota_{\Param})(\breve{\param}_{t+1}))\right] \leq \varepsilon \text{~~~with~~~} \tilde{O}(\varepsilon^{-4}) \text{~~~samples.}
		\end{equation*}
	\end{lemma}    
	\begin{proof}[Proof of Lemma~\ref{lem: post_process_output}]
		By the definition of $\breve{\param}_{t+1}$, we have that
		\begin{equation*}
			\param_t - \tilde{\nabla} f(\param_t) -\breve{\param}_{t+1} \in \partial \iota_{\Param}(\breve{\param}_{t+1}).
		\end{equation*}
		As a result, we have
		\begin{align*}
			\mathbb{E}\left[\mathrm{dist}(\mathbf{0}, \partial(f+\iota_{\Param})(\breve{\param}_{t+1}))\right] &= \mathbb{E}\left[ \min_{v \in \partial \iota_{\Param}(\breve{\param}_{t+1})}\| \nabla f(\breve{\param}_{t+1}) + v \| \right]\\
			&\leq \mathbb{E} \| \nabla f(\breve{\param}_{t+1}) - \breve{\param}_{t+1} + \param_t - \tilde{\nabla} f(\param_t) \|.
		\end{align*}
		For convenience, let $\tilde{\param}_{t+1} = \proj_{\Param}(\param_t - \nabla f(\param_t))$.
		We continue estimating the last inequality by using this definition, triangle inequality, nonexpansiveness of $\proj_{\Param}$, and $\rho$-smoothness of $f$
		\begin{align*}
			\mathbb{E}\Big[\mathrm{dist}(\mathbf{0}, \partial(f+\iota_{\Param})&(\breve{\param}_{t+1}))\Big] \\
			&\leq \mathbb{E} \left[ \| \param_t - \breve{\param}_{t+1} \| + \| \nabla f(\breve{\param}_{t+1}) - \nabla f(\param_t)\| + \|\tilde{\nabla}f(\param_t)-\nabla f(\param_t) \| \right] \\
			&\leq \mathbb{E} \left[ (1+\rho)\| \param_t - \breve{\param}_{t+1} \| + \|\tilde{\nabla}f(\param_t)-\nabla f(\param_t) \| \right] \\
			&\leq \mathbb{E} \left[ (1+\rho)\left(\| \param_t - \tilde{\param}_{t+1} \| + \| \tilde{\param}_{t+1}- \breve{\param}_{t+1} \|\right) + \|\tilde{\nabla}f(\param_t)-\nabla f(\param_t) \| \right]\\
			&\leq \mathbb{E} \left[ (1+\rho)\| \param_t - \tilde{\param}_{t+1} \| + (2+\rho) \|\tilde{\nabla}f(\param_t)-\nabla f(\param_t) \| \right].
		\end{align*}
		By the assumption in the lemma, recall that we have $\|\param_t - \tilde{\param}_{t+1} \| \leq \varepsilon$, therefore we have to estimate the last term in the last inequality. We use Lem. 7.1 in \cite{lyu2022convergence} (see also Lemma \ref{lem:f_n_concentration_L1_gen}) with $\psi = \nabla \ell$ to get $\mathbb{E}\| \tilde{\nabla}f(\param_t) - \nabla f(\param_t) \| = O(\hat N^{-1/2})$ with $\hat N$ samples and finish the proof.
	\end{proof}
	
	\begin{proof}[Proof of Theorem~\ref{th: th_proj_sgd_smooth}]
		When $g$ is smooth, we can use the results in Sec. 2.2 in \cite{davis2019stochastic} to show that for any $\param$,
		\begin{align*}
			\|\mathcal{G}_{1/2\hat\rho}(\param)\| \leq \frac{3}{2} \| \nabla \varphi_{1/\hat\rho}(\param) \|.
		\end{align*}
		This establishes that the upper bound of Thm.~\ref{th: th_proj_sgd} also upper bounds the norm of the gradient mapping $\|\mathcal{G}_{1/2\hat\rho}(\param)\|$. By invoking Thm.~\ref{th: th_proj_sgd} with a randomly selected iterate, this establishes the bound required for Lem.~\ref{lem: post_process_output} and then applying Lem.~\ref{lem: post_process_output} gives the result.
	\end{proof}
	
	\section{Proof and discussions for Section~\ref{section:ODL}}\label{app: odl}
	\begin{proof}[Proof of Corollary~\ref{cor: odl}]
		Follows immediately from Theorems  \ref{th: th_proj_sgd}, \ref{th: th_adagrad}, \ref{th: th_shb} and \ref{th: th_proj_sgd_smooth}. For the last statement for squared Frobenius loss, see \cite{mairal2010online} for verifying  \cref{assumption:A5} and \cref{assumption:A6} and recall that \cref{assumption:A6} implies \cref{assumption:A2}.
	\end{proof}

	\section{Details about the experimental setup}
	
	For our experimental setup, we implemented the SGD based algorithms we have in this paper. The implementation of SMM uses one step of dictionary learning update given in~\cite{mairal2010online} with the special step size therein.
	We did not tune SMM further since the algorithm is well-established and specialized for ODL tasks, since the work of~\citep{mairal2010online}.
	
	For projected SGD and projected SGD with momentum, we used a step size of the form
	\begin{equation*}
		\alpha_t = \frac{c}{\sqrt{t+1}},
	\end{equation*}
	and tuned $c\in [0.01, 1]$. In~\cite{mairal2010online} and~\citep{zhao2017online}, the authors noted that using a step size $\alpha_t = \frac{c_1}{c_2 t + c_3}$ and tuning $c_1, c_2, c_3$ for SGD seemed to work well. We did not choose this rule in order not to tune three different parameters and since, as we show with our analysis, the best complexity is attained with a scaling of $\frac{1}{\sqrt{t}}$ for the step size.
	Consistent with~\citep{mairal2010online,zhao2017online}, we also observed further tuning with such a rule enhances the empirical performance of SGD-based methods. However we refrain from such a specialized tuning, since our goal is not to provide an exhaustive practical benchmark, but to enhance the theoretical understanding of algorithms whose practical merit is already well-established in a wide variety of tasks (SGD, SGD with momentum and AdaGrad). 
	
	For AdaGrad, we picked the step size
	\begin{equation*}
		\alpha_t = \frac{c}{\sqrt{\sum_{i=1}^t \| G(\param_t, \x_{t+1})\|^2}+\varepsilon},
	\end{equation*}
	with $\varepsilon = 10^{-8}$ and $c\in[0.1, 1]$ is tuned.

	\section{Convergence of PSGD in the state-dependent case}\label{sec: we3}

	\begin{theorem}[extended version of Theorem~\ref{th: th_proj_sgd_state_dep} in the main text]\label{thm:PSGD_state_dependent_MC}
		Let Assumptions~\ref{assumption:A2}, \ref{assumption:A1'}, \ref{assumption:A2'}, and  \ref{assumption:A3} hold. Let $(\param_{t})_{t\ge 1}$ be a sequence generated by Algorithm \ref{algorithm:prox_grad}. 
		Fix $\hat{\rho}>\rho$. Then we have for each $T\ge 1$ that
		\begin{align}
			\E\left[     \lVert \nabla \varphi_{1/\hat{\rho}}(\param_{T}^{\mathrm{out}}) \rVert^{2} \right] &\le \frac{\hat{\rho}^{2}L^2}{\hat{\rho}-\rho} \frac{1}{\sum_{k=1}^{T} \alpha_{k}}\bigg[\frac{ \varphi_{1/\hat{\rho}}(\param_{1}) - \inf \varphi_{1/\hat{\rho}}}{\hat{\rho}L^2}   + \frac{1}{2} \sum_{t=1}^{T} \alpha_{t}^{2}\bigg] \\
			&\qquad + \frac{\hat\rho}{\sum_{k=1}^{T} \alpha_{k}} \bigg[\frac{\alpha_1}{2} \left(\| \param_1 - \hat\param_1 \|^2 + C_1^2\right) + \frac{\alpha_T}{2} \frac{4LC_2}{\hat\rho - \rho} + \sum_{t=2}^T \frac{2L^2C_3\alpha_t\alpha_{t-1}}{\hat\rho-\rho} \\
			&\qquad+ \sum_{t=2}^T C_2 \alpha_t\left( \alpha_{t-1}L + \frac{\hat\rho}{\hat\rho-\rho}\alpha_{t-1} L \right) + \sum_{t=2}^T |\alpha_{t-1} - \alpha_t | \frac{2LC_2}{\hat\rho-\rho} \bigg].
		\end{align}
		In particular, with $\alpha_t = \frac{c}{\sqrt{t}}$ for some $c> 0$, we have that
		\begin{equation*}
			\E\left[     \lVert \nabla \varphi_{1/\hat{\rho}}(\param_{T}^{\mathrm{out}}) \rVert \right] \leq \varepsilon \text{~~with~~} \tilde{O}\left( \varepsilon^{-4} \right) \text{~~samples.}
		\end{equation*}
	\end{theorem}
	\begin{remark}
		Even though our main focus is operating under~\cref{assumption:A2} which is the main assumption on the data used in most of the other works we compare with~\citep{lyu2022convergence}, we also give this theorem for completeness. This theorem operates under another assumption depending on the solution of Poisson equation and is used in~\citep{karimi2019non,tadic2017asymptotic}. By using these techniques, we show that we can extend the guarantees in these papers to the constrained case. One difference is that in the constrained case, we need a slightly stronger assumption on the norms of the gradients, see~\cref{assumption:A3}.
	\end{remark}
	\begin{proof}
		We will follow the proof of Theorem~\ref{th: th_proj_sgd} until~\eqref{eq: htr3} which is where the main error term due to non-i.d.d. data appears. We rewrite this inequality for convenience, after taking total expectation and summing the inequality for $t\geq 1$
		\begin{align}
			\sum_{t=1}^T \mathbb{E}\left[\varphi_{1/\hat{\rho}} (\param_{t+1})\right] &\le  \sum_{t=1}^T \mathbb{E}\left[\varphi_{1/\hat{\rho}} (\param_{t})\rVert^{2}\right] + \sum_{t=1}^T \hat{\rho} \alpha_{t} \mathbb{E} \langle \hat{\param}_{t} - \param_{t},\, G(\param_{t}, \x_{t+1})\rangle  + \sum_{t=1}^T\frac{\alpha_{t}^{2} \hat{\rho} }{2} \mathbb{E}\| G(\param_t, \x_{t+1})\|^2 \notag\\
   &= \sum_{t=1}^T \mathbb{E}\left[\varphi_{1/\hat{\rho}} (\param_{t})\rVert^{2}\right] +\sum_{t=1}^T \hat{\rho} \alpha_{t} \mathbb{E} \langle \hat{\param}_{t} - \param_{t},\, G(\param_{t}, \x_{t+1})-\nabla f(\param_t)\rangle
      \notag\\
&\quad + \sum_{t=1}^T \hat{\rho} \alpha_{t} \mathbb{E} \langle \hat{\param}_{t} - \param_{t},\, \nabla f(\param_{t})\rangle+ \sum_{t=1}^T\frac{\alpha_{t}^{2} \hat{\rho} }{2} \mathbb{E}\| G(\param_t, \x_{t+1})\|^2   \label{eq: tfr3}
		\end{align}
		We have to then bound for the second term on the right-hand side:
		\begin{equation}\label{eq: ser1}
			\left\vert \mathbb{E}\hat \rho \sum_{t=1}^T \alpha_t \langle \hat{\param}_t - \param_t, \nabla f(\param_t) - G(\param_t, \x_{t+1}) \rangle\right\vert .
		\end{equation}
		We can then simply follow the same strategy as~\citep{karimi2019non} to obtain the result. For clarity, we write down these steps explicitly in the rest of this proof.
  
  In particular, by~\eqref{eq: gk5}, we have
		\begin{equation*}
			\hat \rho \sum_{t=1}^T \alpha_t \langle \hat{\param}_t - \param_t, \nabla f(\param_t) - G(\param_t, x_{t+1}) \rangle = -\hat \rho \sum_{t=1}^T \alpha_t \langle \hat{\param}_t - \param_t, \hat G(\param_t, \x_{t+1}) - P_{\param_t}\hat G(\param_{t}, \x_{t+1}) \rangle.
		\end{equation*}
		Separating the inner product on the right-hand side to two parts and shifting indices give us
		\begin{multline*}
			- \sum_{t=1}^T \alpha_t \langle \hat{\param}_t - \param_t, \hat G(\param_t, \x_{t+1}) - P_{\param_t}\hat G(\param_{t}, \x_{t+1}) \rangle = -\alpha_1 \langle \hat{\param}_1 - \param_1, \hat G(\param_1, \x_{2}) \rangle - \sum_{t=2}^T \alpha_t \langle \hat{\param}_t - \param_t, \hat G(\param_t, \x_{t+1}) \rangle \\
			-\alpha_T \langle\hat{\param}_T - \param_T, -P_{\param_T}\hat G(\param_T, \x_{T+1}) \rangle -\sum_{t=2}^{T} \alpha_{t-1} \langle \hat{\param}_{t-1} - \param_{t-1}, - P_{\param_{t-1}}\hat G(\param_{t-1}, \x_{t}) \rangle.
		\end{multline*}
		To bound the two sums in the right-hand side, we add and subtract $\sum_{t=2}^T \alpha_t\langle \hat{\param}_t - \param_t, P_{\param_t}\hat G(\param_t, \x_t) \rangle$ to get
		\begin{multline*}
			- \sum_{t=2}^T \alpha_t \langle \hat{\param}_t - \param_t, \hat G(\param_t, \x_{t+1}) \rangle -\sum_{t=2}^{T} \alpha_{t-1} \langle \hat{\param}_{t-1} - \param_{t-1}, - P_{\param_{t-1}}\hat G(\param_{t-1}, \x_{t}) \rangle \\
			= - \sum_{t=2}^T \alpha_t \langle \hat{\param}_t - \param_t, \hat G(\param_t, \x_{t+1}) - P_{\param_t}\hat G(\param_t, \x_t) \rangle - \sum_{t=2}^T \alpha_t \langle \hat{\param}_t-\param_t, P_{\param_t}\hat G(\param_t, \x_t) \rangle \\- \sum_{t=2}^T \alpha_{t-1} \langle \hat{\param}_{t-1} - \param_{t-1}, -P_{\param_{t-1}}\hat G(\param_{t-1}, \x_t) \rangle \\
			= - \sum_{t=2}^T \alpha_t \langle \hat{\param}_t - \param_t, \hat G(\param_t, \x_{t+1}) - P_{\param_t}\hat G(\param_t, \x_t) \rangle - \sum_{t=2}^T \alpha_t \langle \hat{\param}_t-\param_t, P_{\param_t}\hat G(\param_t, \x_t) - P_{\param_{t-1}}\hat G(\param_{t-1}, \x_t) \rangle \\
			-\sum_{t=2}^T \alpha_t\langle (\hat{\param}_{t-1} - \param_{t-1}) - (\hat{\param}_t-\param_t), -P_{\param_{t-1}}\hat G(\param_{t-1}, \x_t) \rangle - \sum_{t=2}^T (\alpha_{t-1} - \alpha_t)\langle \hat{\param}_{t-1}-\param_{t-1}, -P_{\param_{t-1}}\hat G(\param_{t-1}, \x_t) \rangle.
		\end{multline*}
		Plugging back to~\eqref{eq: ser1}, we get
		\begin{align}
			\mathbb{E} \sum_{t=1}^T \alpha_t \langle \hat{\param}_t - \param_t, \nabla f(\param_t) - G(\param_t, \x_{t+1}) \rangle &\leq \mathbb{E}\left[-\alpha_1 \langle \hat{\param}_1 - \param_1, \hat G(\param_1, \x_{2}) \rangle -\alpha_T \langle\hat{\param}_T - \param_T, -P_{\param_T}\hat G(\param_T, \x_{T+1}) \rangle\right] \notag \\
			&\quad - \mathbb{E}\sum_{t=2}^T \alpha_t \langle \hat{\param}_t - \param_t, \hat G(\param_t, \x_{t+1}) - P_{\param_t}\hat G(\param_t, \x_t) \rangle \notag\\
			&\quad - \mathbb{E}\sum_{t=2}^T \alpha_t \langle \hat{\param}_t-\param_t, P_{\param_t}\hat G(\param_t, \x_t) - P_{\param_{t-1}}\hat G(\param_{t-1}, \x_t) \rangle \notag\\
			&\quad  -\mathbb{E}\sum_{t=2}^T \alpha_t\langle (\hat{\param}_{t-1} - \param_{t-1}) - (\hat{\param}_t-\param_t), -P_{\param_{t-1}}\hat G(\param_{t-1}, \x_t) \rangle \notag\\
			&\quad- \mathbb{E}\sum_{t=2}^T (\alpha_{t-1} - \alpha_t)\langle \hat{\param}_{t-1}-\param_{t-1}, -P_{\param_{t-1}}\hat G(\param_{t-1}, \x_t) \rangle.\label{eq: www3}
		\end{align}
		We bound the right-hand side in order. First
		\begin{equation*}
			\mathbb{E}\left[-\alpha_1 \langle \hat{\param}_1 - \param_1, \hat G(\param_1, \x_{2}) \rangle -\alpha_T \langle\hat{\param}_T - \param_T, -P_{\param_T}\hat G(\param_T, \x_{T+1}) \rangle\right] \leq \frac{\alpha_1}{2} \left(\| \param_1 - \hat\param_1 \|^2 + C_1^2\right) +  \frac{2\alpha_TLC_2}{\hat\rho - \rho},
		\end{equation*}
		by~\cref{assumption:A3},~\cref{assumption:A2'} and Lemma~\ref{lem: bdd_xt_xthat}.
		
		Second, we use the tower rule and $\mathcal{F}_t$ measurability of $\param_t-\hat{\param}_t$ where $\mathcal{F}_{t}:=\sigma( X_{0}, \param_{0}, X_{1},\param_{1},\dots, X_{t},\param_{t})$, with~\cref{assumption:A1'} (used with $H(X_t) = G(\param_t, \x_t)$) to get
		\begin{equation*}
			\mathbb{E}\sum_{t=2}^T \alpha_t \langle \hat{\param}_t - \param_t, \hat G(\param_t, \x_{t+1}) - P_{\param_t}\hat G(\param_t, \x_t) \rangle = \mathbb{E}\sum_{t=2}^T \alpha_t \langle \hat{\param}_t - \param_t, \mathbb{E}[\hat G(\param_t, \x_{t+1})~|~\mathcal{F}_t] - P_{\param_t}\hat G(\param_t, \x_t) \rangle =0.
		\end{equation*}
		Third,
		\begin{align*}
			- \mathbb{E}\sum_{t=2}^T \alpha_t \langle \hat{\param}_t-\param_t, P_{\param_t}\hat G(\param_t, \x_t) - P_{\param_{t-1}}\hat G(\param_{t-1}, \x_t) \rangle &\leq \mathbb{E}\sum_{t=2}^T C_3\alpha_t \| \hat{\param}_t-\param_t\|\| \param_t - \param_{t-1}\|\\
			&\leq \mathbb{E}\sum_{t=2}^T \frac{2L^2C_3\alpha_t\alpha_{t-1}}{\hat\rho-\rho},
		\end{align*}
		where the first step used~\cref{assumption:A2'} and the last step used the definition of $\param_t$, nonexpansiveness of projection,~\cref{assumption:A3} and Lemma~\ref{lem: bdd_xt_xthat}.
		
		Fourth, we have
		\begin{align*}
			-\mathbb{E}\sum_{t=2}^T \alpha_t\langle (\hat{\param}_{t-1} - \param_{t-1}) - (\hat{\param}_t-\param_t), -P_{\param_{t-1}}\hat G(\param_{t-1}, \x_t) \rangle &\leq \mathbb{E}\sum_{t=2}^T C_2 \alpha_t  \left(\|\param_t - \param_{t-1}\| + \| \hat{\param}_t - \hat{\param}_{t-1}\| \right) \\
			&\leq \sum_{t=2}^T C_2 \alpha_t\left( \alpha_{t-1}L + \frac{\hat\rho}{\hat\rho-\rho}\alpha_{t-1} L \right).
		\end{align*}
		where the first step used~\cref{assumption:A2'}, and triangle inequality, and the last step used the definition of $\param_t$, nonexpansiveness of projection,~\cref{assumption:A3} and Lemma~\ref{lem: prox_wc_lips}.
		
		Fifth, by using Lemma~\ref{lem: bdd_xt_xthat} and~\cref{assumption:A2'}, we have
		\begin{equation*}
			- \mathbb{E}\sum_{t=2}^T (\alpha_{t-1} - \alpha_t)\langle \hat{\param}_{t-1}-\param_{t-1}, -P_{\param_{t-1}}\hat G(\param_{t-1}, \x_t) \rangle \leq \sum_{t=2}^T |\alpha_{t-1} - \alpha_t | \frac{2LC_2}{\hat\rho-\rho}.
		\end{equation*}
		Plugging these five estimations to~\eqref{eq: www3} bounds the error term in~\eqref{eq: ser1}. Then plugging this to~\eqref{eq: tfr3}, we finish the proof after following the same steps as Theorem~\ref{th: th_proj_sgd}.
	\end{proof}
	\if 0
	According to eq (18) in our paper, we need to bound 
	\begin{equation*}
		\hat \rho \sum_{t=1}^T \alpha_t \langle \hat\theta_t - \theta_t, \nabla f(\theta_t) - G(\theta_t, x_{t+1}) \rangle
	\end{equation*}
	It is sufficient to show these terms to be of order $O(1+\alpha_t^2)$ and we can show this similar to [4]. By~\eqref{eq: gk5}, we have
	\begin{equation*}
		-\hat \rho \sum_{t=1}^T \alpha_t \langle \hat\theta_t - \theta_t, \hat G(\theta_t, X_{t+1}) - P_{\theta_t}\hat G(\theta_{t}, X_{t+1}) \rangle.
	\end{equation*}
	Since in our case, we continue to have $\hat \theta_t - \theta_t$ to be $\mathcal{F}_t$-measurable, $\theta \mapsto \hat\theta_t$ to be Lipschitz, $\|\theta_{t+1}-\theta_t\|=O(\alpha_t)$ and $\|\nabla f(\theta_{t+1}) - \nabla f(\theta_t)\| = O(\alpha_t)$, with the assumptions of [4] given above, our analysis would give the same complexity with state-dependent Markov chain as in [4], with our assumption (A3) replacing (A7) in [4].
	{\color{blue}
		\paragraph{Case 1: Constrained nonsmooth problems with (A3)}
		\begin{multline*}
			- \sum_{t=1}^T \alpha_t \langle \hat\theta_t - \theta_t, \hat G(\theta_t, X_{t+1}) - P_{\theta_t}\hat G(\theta_{t}, X_{t+1}) \rangle = -\alpha_1 \langle \hat\theta_1 - \theta_1, \hat G(\theta_1, X_{2}) \rangle - \sum_{t=2}^T \alpha_t \langle \hat\theta_t - \theta_t, \hat G(\theta_t, X_{t+1}) \rangle \\
			-\alpha_T \langle\hat\theta_T - \theta_T, -P_{\theta_T}\hat G(\theta_T, X_{T+1}) \rangle -\sum_{t=2}^{T} \alpha_{t-1} \langle \hat\theta_{t-1} - \theta_{t-1}, - P_{\theta_{t-1}}\hat G(\theta_{t-1}, X_{t}) \rangle
		\end{multline*}
		We also decompose
		\begin{multline*}
			- \sum_{t=2}^T \alpha_t \langle \hat\theta_t - \theta_t, \hat G(\theta_t, X_{t+1}) \rangle -\sum_{t=2}^{T} \alpha_{t-1} \langle \hat\theta_{t-1} - \theta_{t-1}, - P_{\theta_{t-1}}\hat G(\theta_{t-1}, X_{t}) \rangle \\
			= - \sum_{t=2}^T \alpha_t \langle \hat\theta_t - \theta_t, \hat G(\theta_t, X_{t+1}) - P_{\theta_t}\hat G(\theta_t, X_t) \rangle - \sum_{t=2}^T \alpha_t \langle \hat\theta_t-\theta_t, P_{\theta_t}\hat G(\theta_t, X_t) \rangle \\- \sum_{t=2}^T \alpha_{t-1} \langle \hat \theta_{t-1} - \theta_{t-1}, -P_{\theta_{t-1}}\hat G(\theta_{t-1}, X_t) \rangle \\
			= - \sum_{t=2}^T \alpha_t \langle \hat\theta_t - \theta_t, \hat G(\theta_t, X_{t+1}) - P_{\theta_t}\hat G(\theta_t, X_t) \rangle - \sum_{t=2}^T \alpha_t \langle \hat\theta_t-\theta_t, P_{\theta_t}\hat G(\theta_t, X_t) - P_{\theta_{t-1}}\hat G(\theta_{t-1}, X_t) \rangle \\
			-\sum_{t=2}^T \alpha_t\langle (\hat\theta_{t-1} - \theta_{t-1}) - (\hat\theta_t-\theta_t), -P_{\theta_{t-1}}\hat G(\theta_{t-1}, X_t) \rangle - \sum_{t=2}^T (\alpha_{t-1} - \alpha_t)\langle \hat\theta_{t-1}-\theta_{t-1}, -P_{\theta_{t-1}}\hat G(\theta_{t-1}, X_t) \rangle.
		\end{multline*}
		As a result, similar to [4], we would have the error terms
		\begin{align*}
			A_1 &= -\alpha_1 \langle \hat\theta_1 - \theta_1, \hat G(\theta_1, X_{2}) \rangle-\alpha_T \langle\hat\theta_T - \theta_T, -P_{\theta_T}\hat G(\theta_T, X_{T+1}) \rangle \\
			A_2 &= - \sum_{t=2}^T \alpha_t \langle \hat\theta_t - \theta_t, \hat G(\theta_t, X_{t+1}) - P_{\theta_t}\hat G(\theta_t, X_t) \rangle\\
			A_3 &= - \sum_{t=2}^T \alpha_t \langle \hat\theta_t-\theta_t, P_{\theta_t}\hat G(\theta_t, X_t) - P_{\theta_{t-1}}\hat G(\theta_{t-1}, X_t) \rangle \\
			A_4 &= -\sum_{t=2}^T \alpha_t\langle (\hat\theta_{t-1} - \theta_{t-1}) - (\hat\theta_t-\theta_t), -P_{\theta_{t-1}}\hat G(\theta_{t-1}, X_t) \rangle\\
			A_5 &=- \sum_{t=2}^T (\alpha_{t-1} - \alpha_t)\langle \hat\theta_{t-1}-\theta_{t-1}, -P_{\theta_{t-1}}\hat G(\theta_{t-1}, X_t) \rangle
		\end{align*}
		for the main Markov bias term
		\begin{equation*}
			\mathbb{E}\left[- \sum_{t=1}^T \alpha_t \langle \hat\theta_t - \theta_t, \hat G(\theta_t, X_{t+1}) - P_{\theta_t}\hat G(\theta_{t}, X_{t+1}) \rangle \right] \leq \mathbb{E}\left[A_1 + A_2 + A_3 + A_4+ A_5\right].
		\end{equation*}
		By~\eqref{eq: gk4} with $H(X) = G(\theta_t, X)$, we have $\mathbb{E} A_2 = 0$ since $\hat\theta_t-\theta_t$ is measurable with respect to $X_t$. For $A_3$, we use the last item in~\eqref{eq: gk6}, boundedness of $\|\hat\theta_t-\theta_t\|$ by Lemma B.4 and $\|\theta_t-\theta_{t-1}\|=O(\alpha_t)$. For $A_4$, we use the second item in~\eqref{eq: gk6}, Lemma B.3 for Lipschitzness of $\theta\mapsto\hat\theta$ and  $\|\theta_t-\theta_{t-1}\|=O(\alpha_t)$. For $A_5$, we use boundedness of $\|\hat\theta_t-\theta_t\|$ by Lemma B.4, second item in~\eqref{eq: gk6} and $|\alpha_{t-1} - \alpha_t| = O(\alpha_t^2)$. Finally, for $A_1$, we have boundedness by the second iterm in~\eqref{eq: gk6} and Lemma B.4. This shows that with (A3) in our paper combined with (A5, A6) in [4], we can get the result. }
	\fi

	\section{Convergence of Online Dictionary Learning with first-order methods}
	\label{sec:ODL_app}

	\begin{assumption} \label{assumption:A6}
		For each $\X$ and $\param$, the function $\param\mapsto \ell(\X,\param)=\inf_{H\in \Param'} \left( d(\X,\param H) + R(H)\right)$ is $L$-smooth for some $L>0$. 
	\end{assumption}
	\noindent In \cite{mairal2010online}, it was shown that both \cref{assumption:A5} and \cref{assumption:A6} are verified when  $d$ satisfies 
	\begin{align}\label{eq:d_ODL_L2}
		d(\X,\param H) = \lVert \X - \param H \rVert_{F}^{2} + \kappa_{2} \lVert H \rVert^{2}_F + \lambda \lVert H \rVert_{1},
	\end{align}
	where $\kappa_{2}>0$ and $\lambda \ge 0$. Then the following result is a direct consequence of our main results, Theorems \ref{th: th_proj_sgd}, \ref{th: th_proj_sgd_smooth}, \ref{th: th_adagrad}, and \ref{th: th_proj_sgd_state_dep}.
	
	\begin{corollary}\label{cor: odl}
		Consider \eqref{eq:ODL_f} and assume \cref{assumption:A5}. 
		Suppose we have a sequence of data matrices $(\X_{t})_{t\ge 0}$ and let $(\param_{t})_{t\ge 1}$ be the sequence of dictionary matrices in $\Param\subseteq \R^{p\times r}$ obtained by either of the three algorithms: Projected SGD (Algorithm \ref{algorithm:prox_grad}), AdaGrad (Algorithm \ref{algorithm:adagrad}), and stochastic heavy ball (Algorithm \ref{algorithm:shb}). Suppose 
		\begin{description}
			\item[(a1)] $\Param$ is compact and the sequence of data matrices $(\X_{t})_{t\ge 0}$ satisfy the assumption \cref{assumption:A1} and has a compact support;
			\item[(a2)] For each $\X$, the function $\param\mapsto \ell(\X, \param)$ is $\rho$-smooth for some $\rho>0$ over $\Param$.
		\end{description}
		Then in all cases, 
		we sample $\hat{t}\in \{1,\dots,T\}$ and compute $\breve{\param}_{\hat t+1}$ as in Theorem \ref{th: th_proj_sgd_smooth} and have the complexity $\mathbb{E}\left[\mathrm{dist}\left(\mathbf{0}, \partial(f+\iota_{\Param})(\breve{\param}_{t+1})\right)\right] \leq \varepsilon$ with $T=\tilde{O}(\varepsilon^{-4})$ samples.     Furthermore, Projected SGD and SHB
		converges almost surely to the set of stationary point of the objective function for \eqref{eq:ODL_f}. In particular, the above results hold under \cref{assumption:A1} and when $d$ is as in \eqref{eq:d_ODL_L2}. 
	\end{corollary}

	
		
		
	
\end{document}